\documentclass[reqno,11pt]{amsart}

\usepackage{amsmath}
\usepackage{amsthm}
\usepackage{amssymb}
\usepackage{amscd}
\usepackage{xypic}
\usepackage{verbatim}
\usepackage{graphicx}
\usepackage{palatino}
\usepackage{bbm}

\usepackage[plainpages=false,colorlinks,hyperindex,pdfpagemode=None,bookmarksopen,linkcolor=red,citecolor=blue,urlcolor=blue]{hyperref}
\usepackage{pdflscape}
\usepackage{stmaryrd}

\usepackage{multirow}

\DeclareFontFamily{U}{mathx}{\hyphenchar\font45}
\DeclareFontShape{U}{mathx}{m}{n}{
      <5> <6> <7> <8> <9> <10>
      <10.95> <12> <14.4> <17.28> <20.74> <24.88>
      mathx10
      }{}
\DeclareSymbolFont{mathx}{U}{mathx}{m}{n}
\DeclareFontSubstitution{U}{mathx}{m}{n}
\DeclareMathAccent{\widecheck}{0}{mathx}{"71}
\DeclareMathAccent{\wideparen}{0}{mathx}{"75}

\theoremstyle{plain}
\newtheorem{theorem}{Theorem}[section]

\newtheorem*{theorem*}{Theorem}
\newtheorem{proposition}{Proposition}[section]
\newtheorem*{proposition*}{Proposition}
\newtheorem{lemma}{Lemma}[section]
\newtheorem*{lemma*}{Lemma}

\newtheorem{corollary}{Corollary}[section]
\newtheorem*{corollary*}{Corollary}

\theoremstyle{definition}

\theoremstyle{remark}
\newtheorem{remark}{Remark}[section]

\renewcommand{\comment}[1] {  }

\DeclareFontFamily{OT1}{rsfs}{}
\DeclareFontShape{OT1}{rsfs}{n}{it}{<-> rsfs10}{}
\DeclareMathAlphabet{\mathscr}{OT1}{rsfs}{n}{it}

\newcommand{\X}{\mathfrak{X}}
\newcommand{\Y}{\mathfrak{Y}}

\newcommand{\Ad}{\mathrm{Ad}}

\newcommand{\C}{\mathfrak{C}}

\newcommand{\adele}{{\mathbb{A}_k}}

\newcommand{\CC}{\mathbb{C}}

\newcommand{\RR}{\mathbb{R}}

\newcommand{\Ind}{\operatorname{Ind}}

\newcommand{\Aut}{{\operatorname{Aut}}}

\newcommand{\Gm}{\mathbb{G}_m}
\newcommand{\Ga}{\mathbb{G}_a}

\newcommand{\GL}{\operatorname{GL}}
\newcommand{\Mat}{\operatorname{Mat}}
\newcommand{\Sym}{\operatorname{Sym}}

\newcommand{\PGL}{\operatorname{PGL}}

\newcommand{\SL}{\operatorname{SL}}

\newcommand{\tr}{\operatorname{tr}}

\newcommand{\spec}{\operatorname{spec}}

\newcommand{\diag}{{\operatorname{diag}}}

\newcommand{\ev}{\operatorname{ev}}

\newcommand{\gr}{{\operatorname{gr}}}

\newcommand{\Id}{\operatorname{Id}}

\newcommand{\Std}{{\operatorname{Std}}}

\newcommand{\val}{{\operatorname{val}}}

\newcommand{\TF}{{\operatorname{TF}}}
\newcommand{\RTF}{{\operatorname{RTF}}}

\newcommand{\LX}{{^LX}}
\newcommand{\LY}{{^LY}}
\newcommand{\LG}{{^LG}}
\newcommand{\LH}{{^LH}}

\newcommand{\Dfrac}[2]{%
  \ooalign{%
    $\genfrac{}{}{1.2pt}0{#1}{#2}$\cr%
    $\color{white}\genfrac{}{}{.4pt}0{\phantom{#1}}{\phantom{#2}}$}%
}

\begin{document}

\swapnumbers

\numberwithin{equation}{section}
\setcounter{tocdepth}{1}
\title{Relative functoriality and functional equations via trace formulas}
\author{Yiannis Sakellaridis}
\email{sakellar@rutgers.edu}

\address{Department of Mathematics and Computer Science, Rutgers University at Newark, 101 Warren Street, Smith Hall 216, Newark, NJ 07102, USA.}

\subjclass[2010]{11F70}
\keywords{Trace formula, Langlands program, beyond endoscopy}

\begin{abstract}
Langlands' functoriality principle predicts deep relations between the local and automorphic spectra of different reductive groups. This has been generalized by the relative Langlands program to include spherical varieties, among which reductive groups are special cases. In the philosophy of Langlands' ``beyond endoscopy'' program, these relations should be expressed as comparisons between different trace formulas, with the insertion of appropriate $L$-functions. The insertion of $L$-functions calls for one more goal to be achieved: the study of their functional equations via trace formulas. 

The goal of this article is to demonstrate this program through examples, indicating a local-to-global approach as in the project of endoscopy. Here, scalar transfer factors are replaced by ``transfer operators'' or ``Hankel transforms'' which are nice enough (typically, expressible in terms of usual Fourier transforms) that they can be used, in principle, to prove global comparisons (in the form of Poisson summation formulas). Some of these examples have already appeared in the literature; for others, the proofs will appear elsewhere.
\end{abstract}

\maketitle

\tableofcontents

\section{Introduction}

\subsection{Relative functoriality and ``beyond endoscopy''}
Let $G$ be a reductive group over a global field $k$, that is: a number field, or the function field of a curve over a finite field. We will be denoting the ring of adeles of $k$ by $\adele$, and the automorphic quotient space $G(k)\backslash G(\adele)$ by $[G]$.

The Langlands program is about \emph{automorphic representations}: those are, essentially, the irreducible representations that appear in the Plancherel decomposition of $L^2([G])$, considered as a $G(\adele)$-representation. There are two main conjectures in the Langlands program: One of them, \emph{reciprocity}, aims to attach automorphic representations to representations of the Galois group of $k$ and, more generally, to motives over $k$ --- and vice versa, for certain automorphic representations that are considered ``algebraic'' \cite{BG}. 
More precisely, the Galois representations should have image in the $L$-group $\LG$ of $G$. The other main conjecture, \emph{functoriality}, aims to relate automorphic representations of two different groups $G_1$ and $G_2$ for every homomorphism $\LG_1\to \LG_2$ of $L$-groups. This paper is related to the latter conjecture.

It has become clear through the decades and, especially, through the work of Arthur \cite{Arthur-unipotent}, that the ``automorphic spectrum'' of a group $G$ is not a very convenient object to study. Instead, it is preferable to study the automorphic spectrum of its trace formulas, and more precisely the individual summands of the trace formulas obtained through the process known as pre-stabilization. For example, the project of \emph{endoscopy} involved a comparison the various endoscopic summands of the twisted trace formula of $\GL_n$, on one hand, and the stable trace formula of classical groups, on the other --- leading to a resolution of the functoriality conjecture for $G_1=$ a classical group, and $\LG_1\to \LG_2 =\GL_N$ the standard representation of its $L$-group \cite{Arthur, Mok, KMSW}.

It has also become clear through the work of Jacquet, D.\ Prasad and others that the functoriality conjecture is not restricted to reductive groups, but to a more general class of $G$-spaces called \emph{spherical varieties}. This class includes symmetric spaces, and any reductive group $H$ can be viewed as a symmetric space under the action of $G=H\times H$ by left and right multiplication. Those varieties should have $L$-groups (although, for now, only their connected components have been defined in satisfactory generality, \cite{KnSch}), and any morphism $\LX_1 \to \LX_2$ between the $L$-groups of two varieties should give rise to relations between their \emph{automorphic spectra} \cite{SV}.

Here, again, the natural setting to define automorphic spectra is that of a generalization of the trace formula, namely the \emph{relative trace formula} of Jacquet. In modern language, the relative trace formula is a distribution on the adelic points of a quotient stack $\X = [(X\times X')/G^\diag]$ \cite{SaStacks}, where $X, X'$ are two spherical varieties (possibly the same) for the same group $G$, together with two ways to write this distribution, one ``geometric'' and one ``spectral''. The content of the spectral side is, by definition, the ``automorphic spectrum'' of the pair $(X,X')$ (or of the space $X$, if $X'=X$). The usual trace formula is obtained by setting $X=X'=H$ and $G=H\times H$, in which case $\X = \frac{H}{H}$, where we use the fraction notation to denote the quotient of the numerator under the action of the denominator by conjugation.

There are cases where such a functorial transfer is easy to establish. Take, for example, $X_1$ to be the space $\Gm\backslash \PGL_2$ and let $X_2$ stand for the Whittaker model of $\PGL_2$. (This is the space $N\backslash \PGL_2$, where $N\simeq \Ga$ is a unipotent subgroup, endowed with a non-trivial adele class character $\psi$ of $N$; such cases can be included into the above general setting.) The dual group is $\SL_2$ for both $X_1$ and $X_2$, so we expect a local and global ``functorial transfer'' between them, that should match the spectral sides of their Plancherel and relative trace formulas. In this case, Hecke's ``unfolding'' method shows that, globally, the $\Gm$-period of a cusp form $\varphi$ can be related to the Whittaker/Fourier period, as follows:
$$ \int_{[\Gm]} \varphi\left(\begin{pmatrix}
                        a \\ & 1 
                       \end{pmatrix}\right) |a|^s da = \int_{\mathbb A_k^\times} \int_{[N]} \varphi\left(\begin{pmatrix}
                        a \\ & 1 
                       \end{pmatrix} n\right) \psi(n) dn |a|^s da.$$
If desired, this can be interpreted in terms of the spectral sides of the associated relative trace formulas. Over a local field $F$, it was observed in \cite[\S 9.5]{SV} that Hecke's method gives rise to an equivariant isometry:
\begin{equation}\label{unfoldingL2} L^2(\Gm \backslash \PGL_2(F)) \xrightarrow{\sim} L^2 (N\backslash \PGL_2(F), \psi).
\end{equation}
Thus, the local and global spectra of the spaces $X_1$ and $X_2$ match. 

The problem is that such simple-minded methods (and others, less simple-minded, such as the theta correspondence), which produce spectral identities and period identities at the level of the spaces under consideration, while very powerful for a class of examples, can only cover a tiny portion of the conjectural cases of functoriality. Clearly, something much broader and deeper is waiting to be discovered.

The impending completion of the endoscopy program led Langlands to formulate a strategy for proceeding further towards the functoriality conjecture; this strategy bears the name ``beyond endoscopy'' \cite{Langlands-BE}. The basic goal of this program is to construct a comparison between the \emph{stable trace formulas} of two reductive groups $G_1, G_2$ for any morphism $\LG_1\to \LG_2$ between their $L$-groups. Unfortunately, many aspects of such a comparison remain unclear, and even the simplest instances of its implementation, in the setting of the trace formula as envisioned by Langlands, are technically very demanding. 

Locally, for example, such a morphism of $L$-groups should induce a map 
\begin{equation}\label{Tgroups} \mathcal T: \mathcal S(\frac{G_2}{G_2}) \to \mathcal S(\frac{G_1}{G_1})
\end{equation}
between spaces of stable orbital integrals (the notation will be explained below), whose adjoint will induce the map of (tempered) stable characters envisioned by functoriality. 
I believe that understanding the nature of this local comparison is key for the ``beyond endoscopy'' program, and will also shed light on the global comparison of stable trace formulas. Indeed, while in the case of endoscopy there was a natural matching between (stable) conjugacy classes, leading to the conjecture that orbital integrals for the two groups should match up to (explicit) transfer factors, such a matching is missing in the case of ``beyond endoscopy'', and therefore the comparison will rely on understanding the \emph{transfer operators} $\mathcal T$ of \eqref{Tgroups}. There have been efforts to reverse-engineer these operators, in specific cases, from known character formulas \cite{Langlands-ST, Johnstone}, and these may eventually shed some light on the question. At this point, it is not clear how the formulas of Langlands (going back to Gelfand and Graev) and Johnstone could be used to obtain a global comparison of trace formulas.

If one is willing to look at the more general problem of ``relative'' functoriality, there are a lot more, and easier, examples of comparisons where these transfer operators can be studied. For example, two spherical varieties $X_1$ and $X_2$ (one of which could be a group) could have the same $L$-group:  
$$ \LX_1 \xrightarrow\sim \LX_2,$$
as we saw above in the example of $X_1 = \Gm\backslash \PGL_2$ and $X_2= (N,\psi)\backslash \PGL_2$. While, in general, there will not be a simple-minded way such as \eqref{unfoldingL2} to directly compare the spectra of the two spaces, there may be a way to perform such a comparison at the level of relative trace formulas.  Establishing the corresponding functorial transfer between the local and global spectra of two spaces with $\LX_1\simeq \LX_2$
can be a highly non-trivial problem that, if solved in generality, will on one hand provide significant insights into the ``beyond endoscopy'' program and, on the other, lead to important relations between periods of periods automorphic forms and special values of $L$-functions, such as the global Gan--Gross--Prasad conjectures \cite{II}.

\vspace{6pt}

The goal of this paper is to demonstrate the nature of these transfer operators in several examples, mostly in the setting when $\LX_1 = \LX_2$.

\vspace{6pt}

Incidentally, although not working with trace formulas, the identity map between $L$-groups features in the work of L.\ Lafforgue on automorphic kernels \cite{Lafforgue}, where local kernels are described for a ``model transition'' from the Whittaker model $X_1=(N,\psi)\backslash \GL_n$ (where $N\subset \GL_n$ is a maximal unipotent subgroup, and $\psi$ a non-degenerate characters) to the group $X_2=H=\GL_n$. Generalizing the formalism of converse theorems, Lafforgue explains that the automorphicity of the global kernel relies on functional equations for certain $L$-functions. This approach seems unorthodox from Langlands' point of view: properties of $L$-functions should be consequences, not prerequisites for functoriality. However, as Ng\^o's recent retake on the program of Braverman and Kazhdan shows \cite{Ngo-monoids} (related also to the author's treatment of the Rankin--Selberg method using spherical varieties \cite{SaRS}), the functional equation of $L$-functions is not necessarily very far from the philosophy of trace formulas and ``beyond endoscopy''. Indeed, one can attempt to recast the functional equation as a Poisson summation formula for a conjectural Fourier transform between ``Schwartz spaces'' for dual reductive monoids (or more general spherical varieties) --- generalizing Tate's thesis \cite{Tate-thesis} and the work of Godement and Jacquet \cite{GJ}. Moreover, instead of studying the Fourier transform between these Schwartz spaces, one can study its descent (the ``Hankel transform'') to their spaces of orbital integrals \cite{Ngo-Hankel}. The Poisson summation formula, then, becomes another instance of a non-endoscopic comparison between trace formulas. Such a derivation of the functional equation using a relative trace formula has already appeared in papers of P.E.\ Herman and the author \cite{Herman, SaBE2}.

\subsection{Contents of the paper} The goal of the present paper is to examine the possibility of non-standard comparisons of trace formulas which correspond to instances of functoriality or of functional equations of $L$-functions. This results to a collection of examples of local ``transfer operators'' and ``Hankel transforms'' between the spaces of test functions (or, more correctly, measures) of relative trace formulas, mostly in rank one. The important conclusion is that these local transfer operators are given in terms of abelian Fourier transforms and multiplication with innocuous scalar factors that are suitable, in principle, for a global comparison of trace formulas, that will have the form of a Poisson summation formula.  I do not discuss this global comparison here; however, the methods employed in \cite{SaBE2} should generalize to all cases discussed here. Another important finding is that these transfer operators behave well under ``degeneration'': that is, when the spherical varieties under consideration are deformed into their ``boundary degenerations'' (such as: $H=\SL_2$ degenerating to the $H\times H$-variety of $2\times 2$-matrices of determinant zero), then the transfer operators degenerate to the corresponding operators for the degenerations (which can be understood using abelian Fourier theory). The examples presented here lead to a more systematic understanding of transfer operators, at least in rank one, that will be the object of an upcoming paper \cite{SaRankone}. 

I start by explaining the general formalism for these comparisons, in a slightly restricted setup that will be sufficient for the examples considered in this paper.

\subsubsection{Schwartz spaces} \label{ssslanguage} Let $\X$ denote a quotient stack of the form $[(X\times X)/G]$, where $X$ is a smooth spherical $G$-variety, and the $G$-action on the product $X\times X$ is understood to be the diagonal one. We will often use the isomorphism $\X = [H\backslash G/H]$, when $X$ is a homogeneous space $H\backslash G$. Everything is defined over a local field $F$, keeping in mind that in the end we would like to see integral transforms that satisfy some sort of ``Poisson summation formula'' over a global field. Whenever there is no confusion, we will denote the set $Y(F)$ of $F$-points simply by $Y$, for any variety (or stack) $Y$.

The stack $\X$ (rather, its $F$-points, which can be thought of as a ``Nash stack over $F$'', s.\ \cite{SaStacks}) may come equipped with a complex line bundle; the only example where we will need a non-trivial line bundle is for the quotient $[N\backslash G/N] = [(N\backslash G \times N\backslash G)/G^\diag]$, where $N$ is a maximal unipotent subgroup of $G$, and the line bundle is defined by a non-degenerate character $\psi$ of $N$. In those cases, we will be writing $\X = [(N,\psi)\backslash G/(N,\psi)]$.

Let $\mathcal S(X)$ denote the space of Schwartz measures on $X=X(F)$ (or measures valued in the line bundle given). These are smooth measures which decay rapidly, together with their polynomial derivatives, in the Archimedean case, and smooth, compactly supported measures in the non-Archimedean case. In some instances where we will need to work with Schwartz functions, instead of measures, we will denote the space of those by $\mathscr F(X)$. Of course, if $dx$ is a nowhere vanishing smooth measure of polynomial growth (such as a Haar measure), we have $\mathcal S(X) = \mathscr F(X) dx$. 

Our varieties will always be quasi-affine, and we denote the categorical quotient $(X\times X)\sslash G = F[X\times X]^G$ by $\C_X$. Thus, we have a push-forward of measures
\begin{equation}\label{pushforward} \mathcal S(X\times X) \to \mathcal M(\C_X),\end{equation}
where $\mathcal M$ denotes the space of all measures. When $X$ is endowed with a non-trivial line bundle, we need to make a choice of trivialization in order for the push-forwards to be defined as scalar-valued measures on $\C_X$. For $N\backslash \PGL_2/N$, with $N$ the subgroup of upper triangular unipotent matrices, the open Bruhat cell is a union of cosets $N\tilde\xi N$, where
$$\tilde \xi = \begin{pmatrix} & -1 \\ \xi \end{pmatrix},\,\, \xi\in \Gm,$$
and we trivialize measures which satisfy $\mu(n_1 g n_2) = \psi(n_1)\mu(g) \psi(n_2)$ (over the open Bruhat cell) by ``evaluating'' them at the elements $\tilde\xi$. 
For $\SL_2$, the corresponding representatives will be 
$$ \tilde\zeta = \begin{pmatrix} & -\zeta^{-1} \\ \zeta \end{pmatrix},\,\, \zeta\in \Gm,$$

Assume that, at the level of (isomorphism classes of) $F$-points, the map $X(F)\times X(F)\to \X(F)$ is surjective; this is the case, for example, for $N\backslash G/N$ and for $T\backslash \PGL_2/ T$, when $T$ is a split torus, but not if $T$ is a non-split torus. Under this assumption,\footnote{If the map $X(F)\times X(F)\to \X(F)$ is not surjective on points, this means that there are non-trivial $G$-torsors $R$ which contribute $F$-points to $\X$. Then, \eqref{pushforward} should be replaced by 
$$ \bigoplus_\alpha \mathcal S(X^\alpha\times X^\alpha) \to \mathcal M(\C_X),$$
with $\alpha$ ranging over all isomorphism classes of $G$-torsors $R^\alpha$, and $X^\alpha:= X\times^G R^\alpha$, a $G^\alpha$-space, where $G^\alpha$ is the inner form $\Aut^G(R^\alpha)$. For example, for $X=T\backslash\PGL_2$, $X^\alpha$ includes $X$ as well as the quotient $T\backslash PD^\times$, where $D$ is the quaternion division algebras over $F$, and $PD^\times$ is the quotient of its multiplicative group by the center. For simplicity of exposition, we will ignore such cases in this paper, although the particular case of a non-split torus in $\PGL_2$ has already been studied in \cite{SaBE1,SaBE2}.} the space of measures that we obtain as the image of \eqref{pushforward} will be denoted by $\mathcal S(X\times X/G)$ or $\mathcal S(\X)$, by abuse of notation. (This notation was used in \cite{SaStacks} for the space $\mathcal S(X\times X)_G$ of $G$-coinvariants; in our examples, the push-forwards of measures to $\C_X$ will correspond to \emph{stable} coinvariants. These are the ``test measures'' for \emph{stable} trace formulas.)

We will also need to consider extended Schwartz spaces of measures; those are spaces of smooth measures with specified behavior at a certain ``infinity''. The conditions at infinity will depend on the situation considered, but these conditions will not always appear explicitly in the notation; rather, any such ``non-standard'' space of Schwartz measures will be denoted by exponents $\pm$ (where we will use both signs when we want to indicate different directions ``at infinity''). Thus, for example, to insert certain $L$-functions into the Kuznetsov formula we will be working with certain non-standard spaces of Whittaker measures, denoted $\mathcal S^-((N,\psi)\backslash G)$, which vanish (or are of rapid decay) close to the ``cusp'', but have non-trivial asymptotics close to the ``funnel'', cf.\ \S \ref{ssnonstandard}. (If we think of $N\backslash \PGL_2$ or $N\backslash \SL_2$ as a cone, resp.\ affine plane, without the origin, the ``cusp'' is then the origin, and the ``funnel'' is the other direction towards infinity.)  Similarly, the theory of asymptotics of (generalized) matrix coefficients on spherical varieties gives rise to functions (or measures) on horospherical spaces that have non-trivial asymptotics ``close to the cusp'', and those will be denoted by the exponent $+$, e.g., 
$$\mathcal S((N,\psi)\backslash G) \to \mathcal S^+(N\backslash G).$$
Correspondingly, the push-forwards of such measures to the categorical quotient $\C_X$ (where $X$ is the space under consideration) will be denoted by $\mathcal S^\pm (\X)$. For specific choices of these extended Schwartz spaces that will concern us in this paper, see \S \ref{ssnonstandard}.

\subsubsection{The relative trace formula}

This paper is concerned with \emph{local} comparisons (``transfer operators'') between spaces of Schwartz measures, but these comparisons are motivated by the desire to have \emph{global} comparisons between relative trace formulas, so let us say a few things about the relative trace formula. The presentation that   follows is quite naive, pretending that all integrals converge. This will rarely be true; instead, integrals have to be regularized. A general process of regularization was explained in \cite[\S 5--6]{SaStacks}, but it does not cover all cases (notably, it does not cover the Arthur--Selberg trace formula). Thus, a relative trace formula in the generality presented here has not been developed yet.

Let $k$ be a global field, $\adele$ its ring of adeles. Assume that $X$ is a smooth, quasi-affine $G$-variety defined over $k$. Having defined the Schwartz spaces of $X$ over any local completion $k_v$ of $k$, we now endow them with a \emph{basic vector} at (almost all) non-Archimedean places, which is typically the characteristic function of $X(\mathfrak o_v)$ (where $\mathfrak o_v$ is the ring of integers of $k_v$, and we assume an integral model outside of a finite set of places), times a $G$-invariant measure (which should be chosen to factorize a canonical global choice of measure, the Tamagawa measure). We keep allowing the case $X=N\backslash G$, endowed with an adele class character $\psi$ of $N(\adele)$. In the case of ``extended Schwartz spaces'' mentioned above, this basic vector will need to be modified --- I point again to \S \ref{ssnonstandard}. Having the basic vectors, we can define global Schwartz spaces as restricted tensor products with respect to these vectors:
$$ \mathcal S(X(\adele)) = \bigotimes_v' \mathcal S(X(k_v)).$$

Correspondingly, for $\X = [X\times X/G]$ we define the basic vector of $\mathcal S(\X(k_v))$ as the push-forward of the basic vector of $\mathcal S(X\times X(k_v))$, and the  global Schwartz space $\mathcal S(\X(\adele))$ as the corresponding restricted tensor product over all places.

The \emph{relative trace formula} of $\X$ is a global analog of the local \emph{Plancherel formula}. The local Plancherel formula is a decomposition of the inner product of two measures (fixing an invariant measure $dx$ on $X(F)$) in terms of \emph{local relative characters}:

\begin{equation} \int_{X(F)} \frac{\Phi_1(x) \Phi_2(x)}{dx} = \int_{\widehat{G}} J_\pi(\Phi_1\otimes \Phi_2) \mu_X(\pi).\end{equation}
We recall that a \emph{relative character} $J_\pi$ on $X=X(F)$, associated to an admissible representation $\pi$, is a $G$-invariant functional on $\mathcal S(X\times X)$ that factors as
$$ \mathcal S(X\times X)\to \pi\hat\otimes \tilde\pi \xrightarrow{{\left<\,\, , \,\, \right>}} \CC.$$
In the group case ($X=H$, $G=H\times H$), this is the same as a character, if we identify the space of $G$-invariant generalized functions on $H\times H$ with conjugation-invariant generalized functions on $H$.

Now, globally, we define a distribution that resembles the local inner product of two functions/measures on $X$, but factors through the space of automorphic functions:

$$\xymatrix{
\Phi_1 \in \mathcal S(X(\adele)) \ar[r] & \Sigma\Phi_1(g):= \sum_{\gamma\in X(k)} \Phi_1(\gamma g) \in C^\infty([G])  \\
&  \bigotimes \ar[r]^{\left<\,\, , \,\,\right>_{[G]}} &\CC\\
\Phi_2 \in \mathcal S(X(\adele)) \ar[r] & \Sigma\Phi_2(g):= \sum_{\gamma\in X(k)} \Phi_2(\gamma g) \in C^\infty([G])  
}$$

Here, the notation is that $[G]=G(k)\backslash G(\adele)$, and we feel free (globally) to identify measures with functions, by dividing by a Tamagawa measure. The last arrow is the integral $\int_{[G]} \Sigma\Phi_1\cdot \Sigma\Phi_2$ of the ``theta series'' $\Sigma \Phi_1$, $\Sigma\Phi_2$, assuming that it converges. The resulting functional 
$$ \mathcal S(X(\adele)) \hat\otimes \mathcal S(X(\adele)) \to \CC$$ will be denoted by $\RTF_X$ or $\RTF_{\X}$, for ``relative trace formula''. The relative trace formula itself is, actually, the spectral and geometric decomposition of this functional; the former in terms of relative characters associated to automorphic representations of $G$, and the latter, roughly, in terms of $G(k)$-orbits on $X\times X(k)$.

More precisely (but very naively, as far as convergence issues are concerned), by using the Plancherel formula for $L^2([G])$ we can decompose $\left<\Sigma\Phi_1,\Sigma\Phi_2\right>$ into a direct integral, over the set of automorphic representations of $G$, of \emph{global relative characters}:
$$ J_\pi^{\rm{gl}}(\Phi_1\otimes\Phi_2) = \sum_{\varphi} \left(\int_{[G]} \Sigma\Phi_1\cdot \bar\varphi\right) \left(\int_{[G]} \Sigma\Phi_2 \cdot\varphi\right),$$
where the sum ranges over an orthonormal basis of $\pi$. For $X=H\backslash G$ homogeneous, these global relative characters can also be expressed in terms of ``period integrals'' of automorphic forms over $[H]$, and those are often (actually, always in the multiplicity--free case) \emph{Eulerian}, i.e., pure tensors:
$$ J_\pi^{\rm{gl}} = \bigotimes_v J_{\pi_v}$$
of local relative characters, when $\pi$ is decomposed as $\bigotimes_v' \pi_v$. A precise expression for the local factors $J_{\pi_v}$ is provided by the Ichino--Ikeda conjecture \cite{II}, and its generalization described in \cite[\S 17]{SV}. The important point here is that this Euler factorization results in \emph{special values of $L$-functions} appearing on the spectral side of the relative trace formula, $L$-values that depend on the variety $X$. I point the reader to the aforementioned references for details on the general case; in this paper, we will only discuss specific examples that will feature in our comparisons.

The geometric decomposition of the relative trace formula or, at least, of its ``stable'' part (which is often the whole $\RTF$), is expressed in terms of the spaces $\mathcal S(\X(k_v))$ of ``stable orbital integrals'' of the various completions of $k$. I point the reader to \cite[\S 6.4]{SaStacks} for an attempt at a general formulation, where the relative trace formula is presented as a sum of the form:
\begin{equation}\label{RTFonstack}
 \sum_{\xi \in \X^{\rm{ss}}(k)} \ev_\xi,
\end{equation}
of ``evaluations'' at the ``semisimple'' global points of the stack $\X$.
Here, we will be concerned with ways to ``match'' the local Schwartz spaces for different quotients of the form $[X\times X/G]$.

\subsubsection{Comparisons} Now let $\X$, $\Y$ be two such quotients, corresponding to spherical varieties $X, Y$ (possibly for different groups $G, G'$, and possibly endowed with line bundles as before). The functoriality predicted by a homomorphism 
\begin{equation}\label{mapLgroups}\LY\to\LX
\end{equation}
 between their $L$-groups should be realized, locally, by a linear ``transfer'' map between the Schwartz spaces
$$ \mathcal T: \mathcal S(\X) \to \mathcal S(\Y),$$
where $\X = [X\times X/G]$, $\Y=[Y\times Y/G']$, satisfying certain local and global desiderata, that we now describe.

The transfer operator $\mathcal T$ should be such that,
locally, the pull-backs of stable tempered relative characters on $\Y$ should be stable tempered relative characters on $\X$, corresponding to the map of $L$-packets that should be associated to \eqref{mapLgroups}.  In the cases that we consider, for an irreducible $\pi$ the space of morphisms $\mathcal S(X\times X)\to \pi\hat\otimes \tilde\pi $ is at most one-dimensional, so the relative character for an irreducible representation is determined uniquely up to scalar. Unlike the group case, there is no canonical normalization of this scalar, but the local factor $J_\pi$ of the Ichino--Ikeda conjecture and its generalizations provides a distinguished such normalization. This normalization will be discussed individually in some examples presented in this paper; for the general case, I point the reader to Chapter 17 of \cite{SV}.

We should also have a ``fundamental lemma'', stating that the image of the basic vector of $\mathcal S(\X)$ under the transfer is the basic vector of $\mathcal S(\Y)$ (at non-Archimedean places), so that the transfer operator translates to a transfer between global Schwartz spaces: 
$$ \mathcal T: \mathcal S(\X(\adele)) \to \mathcal S(\Y(\adele)),$$
where $\adele$ denotes the ring of adeles of $k$.

Globally, when \eqref{mapLgroups} is an isomorphism, the map should commute with the (stable) ``relative trace formula'' distributions, thus we should have a commutative diagram:
$$ \xymatrix{ \mathcal S(\X(\adele)) \ar[rr]^{\mathcal T}\ar[dr]_{\RTF_\X} & & \ar[dl]^{\RTF_\Y}\mathcal S(\Y(\adele))\\
& \CC &}.$$

(When the map of $L$-groups is not an isomorphism, it is not clear what to ask of the corresponding relative trace formulas; the thesis of Venkatesh \cite{Venkatesh}, not discussed here but revisited in the upcoming paper \cite{SaTransfer2}, gives some indication about what such a global comparison might look like.)

These desiderata are actually incompatible with each other; for example, as mentioned in the previous subsection, relative trace formulas include periods of automorphic forms which very often are equal to special values of $L$-functions, and those $L$-functions typically do not coincide for $\X$ and $\Y$. Thus, they have to be inserted into the trace formulas, which is why the transfer operator should, typically, be between non-standard spaces of test measures
\begin{equation}\label{transfer}
 \mathcal T: \mathcal S^\pm(\X) \to \mathcal S^\pm(\Y)
\end{equation}
(at least for one of the quotients involved). This includes the case when we want to use trace formulas to encode the functional equation of $L$-functions; in that case, the spectral decomposition of the left hand side will involve some $L$-functions $L(\pi,\rho)$, and that of the right hand side will involve the dual $L$-functions $L(\pi,\rho^\vee)$; the transfer operator would be a descent, to the level of orbital integrals, of the ``Fourier transforms'' envisioned by Braverman and Kazhdan \cite{BK1, BK2}. Following Ng\^o \cite{Ngo-Hankel}, we call ``Hankel transform'' the descent of such a Fourier transform to the level of orbital integrals.

The basic questions to be answered in such a setting are the following:

\begin{itemize}
 \item Can we describe the appropriate Schwartz spaces $\mathcal S^\pm(\X)$ $\mathcal S^\pm(\Y)$, and the correct transfer operator between them, that is, a linear isomorphism that pulls back relative characters to relative characters, and satisfies the fundamental lemma? In practice, checking the statement on relative characters locally can be difficult or impossible; instead, one might just require that the statement be true for unramified representations, in the form of a ``fundamental lemma for the Hecke algebra'', and deduce it for the rest by global-to-local means. 
 \item In the case $\LX=\LY$, globally, does the transfer operator $\mathcal T$ commute with the ``relative trace formula'' functionals? (This would be a form of Poisson summation, expressed in terms of a ``Poisson sum'' of the form \eqref{RTFonstack}.) In the general case, what is the nature of the comparison of relative trace formulas afforded by the transfer operator? 
\end{itemize}

\subsubsection{List of cases considered in this paper}

In this paper, we will revisit and cast in the setup presented above the following cases, most of which have appeared in the literature in one form or another. New results are presented without proof; proofs appear in the papers \cite{SaTransfer1, SaTransfer2}.

For the examples below, when nothing is mentioned the map between $L$-groups corresponding to our comparison is an isomorphism. The numbering corresponds to the chapters of this paper.

The first group of comparisons concerns functoriality between different spaces:

\begin{description}
 \item[[\ref{sec:Cartan}]] $\X = $ the adjoint quotient of any quasi-split group $H$, and $\Y = $ its universal Cartan. The dual $\LY$ is the canonical maximal torus of $\LH$. This is a warm-up exercise.
 \item[[\ref{sec:degen}]]  $\X = (X\times X)/G$ for any spherical $G$-variety $X$ (we will assume $G$ to be split, for simplicity), and $\Y = (X_\emptyset \times X_\emptyset)/G$, where $X_\emptyset$ is its (most degenerate) boundary degeneration. This is very closely related to the previous case, since the boundary degeneration is parabolically induced from the universal Cartan $A_X$ of $X$; the pertinent map of $L$-groups is the canonical embeding ${^LA_X} \hookrightarrow \LX$. We will not give a geometric expression for transfer operators here, but we will perform some calculations of relative characters and transfer operators between degenerations of different spaces, that will later be compared to transfer operators between the original spaces.
 \item[[\ref{sec:Rudnick}]]  $\X= (N,\psi)\backslash \SL_2 /(N,\psi)$ and $\Y = \frac{\SL_2}{\SL_2}$, i.e., comparing the Kuznetsov and the stable Selberg trace formula for the group $\SL_2$. This comparison was performed, in a classical language, in Rudnick's thesis \cite{Rudnick}; here we will only discuss the local aspects.
 \item[[\ref{sec:Waldspurger}]] $\X = (N,\psi)\backslash\PGL_2/(N,\psi)$ and $\Y = T\backslash \PGL_2/T$, where $T$ is a torus, taken (for simplicity) to be split. This is a review of \cite{SaBE1}. 
\end{description}

The second group has to do with functional equations of $L$-functions:

\begin{description}
 \item[[\ref{sec:standard}]] $\X = \Y = (N,\psi)\backslash \GL_n / (N,\psi)$, but equipped with non-standard test measures, corresponding to the standard representation of the $L$-group (for $\X$) and its dual (for $\Y$). The Hankel transforms, here, which correspond to the functional equation of the standard $L$-function, were computed by Jacquet \cite{Jacquet}.
 \item[[\ref{sec:sym2}]] $\X =\Y = (N,\psi)\backslash \SL_2 \times \Gm / (N,\psi)$, equipped with non-standard test measures, corresponding to the symmetric square $L$-function (for $\X$) and its dual (for $\Y$). This case has not yet appeared in the literature, but in \cite{SaTransfer2} we compute its Hankel transform out of the Rankin--Selberg method.
\end{description}

In the cases of Hankel transforms, it will turn out that we obtain nicer formulas in these cases if we work with \emph{half-densities}, instead of measures, cf.\ \S \ref{sec:standard}, \S \ref{sec:sym2}.

In all cases, we will not only compute the transfer operators, but we will also study how those degenerate, by which I mean the following: For every spherical variety $X$, we have its ``boundary degeneration'' or ``asymptotic cone'' $X_\emptyset$, which has already been mentioned. For example, the boundary degeneration of the $G=\SL_2\times \SL_2$-variety $X=\SL_2$ is the $G$-variety of non-zero $2\times 2$ matrices of determinant zero. In the non-Archimedean case, we have (presently, under some assumptions) a universal ``asymptotics'' map 
\begin{equation}\label{Bmap}e_\emptyset^*: \mathcal S(X) \to \mathcal S^+(X_\emptyset),\end{equation}
landing in a space of smooth measures whose support has compact closure in an affine embedding of $X_\emptyset$. Moreover, there is a $G\times \Gm$-family $\mathcal X\to \Ga$, whose general fiber is isomorphic to $X$ and whose special fiber is isomorphic to $X_\emptyset$. (When $X= (N,\psi)\backslash G$, then $X_\emptyset$ is equal to $X$ as a variety, but this family corresponds to the degeneration of the character $\psi$ to the trivial character.)

For a pair of spaces $X, Y$ (and the related stacks $\X, \Y$), it is natural to attempt a comparison between the transfer operator for $\X, \Y$, and the transfer operator for (the corresponding quotients associated to the boundary degenerations) $\X_\emptyset$, $\Y_\emptyset$. Calculations of relative characters will allow us to determine what the transfer operator or Hankel transform should be in each case for the degenerations. 

It turns out, in all cases considered, that the transfer operators are simply \emph{deformations} of the corresponding tranfer operators for the boundary degenerations. Thus, the examples discussed in this paper reveal a lot of structure behind the transfer operators of ``beyond endoscopy''; this structure was previously unnoticed in the --- heavily tilted towards analytic number theory and overlooking local aspects --- literature in the subject.

\subsection{Acknowledgements} 

Most of the calculations in this paper were performed while visiting the University of Chicago during the winter and spring quarters of 2017. I am grateful to Ng\^o Bao Ch\^au for the invitation, and for numerous conversations and references which made this paper possible. His ideas permeate the paper. The paper was finished during my stay at the Institute for Advanced Study in the Fall of 2017.

This work was supported by NSF grant DMS-1502270, and by a stipend to the IAS from the Charles Simonyi Endowment.

\part{General considerations}

\section{Calculus of equivariant Fourier transforms} \label{sec:Fourier}

Before we embark on trace formula comparisons, we need to introduce a basic family of transformations that will appear all the time, the \emph{equivariant Fourier transforms}. Here, ``equivariant'' means with respect to the action of a multiplicative group or, more generally, a torus. 

We fix throughout a non-trivial unitary character $\psi$ of the additive group $F$. At non-Archimedean places, we will be assuming that its conductor is the ring of integers $\mathfrak o$. We also fix a measure $dx$ on $F$ which is self-dual with respect to $\psi$; this induces a measure $|\omega|$ on $X(F)$, for every volume form $\omega$ on a smooth variety $X$ over $F$. We will also use the measure $d^\times x := \frac{dx}{|x|}$ on the multiplicative group $F^\times$.

Let $T$ be a torus, and $\mathcal M_0(T)$ a space of measures on $T=T(F)$ (with properties to be specified). For any $s\in \CC$, consider the distribution 
\begin{equation} \label{DS} D_s:=|x|^s \psi(x) d^\times x
\end{equation}
on $F^\times$. Any cocharacter $\check\lambda: \Gm\to T$ induces, by push-forward, a distribution $\check\lambda_*D_s$ on $T$. The equivariant Fourier transform $\mathscr F_{\check\lambda,s}$ is defined as the operator of convolution by $\check\lambda_* D_s$, on the given space of measures $\mathcal M_0(T)$. 
If $\mathcal M_0(T) = \mathcal S(T)$, the convolution is convergent. In the general case, we will typically need to regularize it. For example, if $T=F^\times$ and  $\mathcal M_0(T) = \mathcal S(F)$ (considered by restriction as measures on $F^\times\subset F$), with $\check\lambda$ the identity cocharacter, we formally have:
$$ \mathscr F_{\check\lambda,s} f (\xi) = \int_{F^\times} |x|^s \psi(x) f(x^{-1} \xi) d^\times x = |\xi|^{-s} \int_{F^\times} |x|^{s-1} f(x^{-1}) \psi(x\xi) dx,$$
and the integral on the right makes sense as the Fourier transform of the distribution $x\mapsto |x|^{s-1} f(x^{-1})$, whenever the latter has a ``natural'' interpretation as a tempered distribution. ``Fourier transform'', here, maps distributions to distributions, since we have fixed the self-dual measure $dx$. 

In general, I will leave ``natural'' interpretations of such regularizations to the reader to figure out, unless there is something highly non-trivial to be pointed out; the typical example of a distribution that needs to be understood as a tempered distribution is of the form $|x|^s f(x) $, where $f\in \mathcal S(F)$ is a Schwartz measure; it is well-known that this is a measure when $\Re(s)>-1$, and makes sense by analytic continuation as a distribution for all but a countable set of values of $s$. 

Finally, we discuss Mellin transforms on tori and functional equations under the equivariant Fourier transforms. Let $f$ be a measure on a torus $T$. Its Mellin transform is a function on the character group of $T$, typically defined by meromorphic continuation of the integral
$$ \check f(\chi) = \int_T f(t) \chi^{-1}(t).$$

Notice that for $T=\Gm$ and $f= \Phi dx$, where $\Phi$ is a Schwartz function on $\Ga$, we have 
$$\check f(\chi) = Z(\Phi, \chi^{-1}, 1),$$
a Tate zeta integral.
The reason for having $\chi^{-1}$, instead of $\chi$ in  the definition of Mellin transforms is that it is more natural to denote by $f\mapsto \check f(\chi)$ the $\chi$-equivariant quotient of $\mathcal S(T)$, despite the fact that this is contrary to the classical definition of Mellin transform on $\Gm$ as $M\Phi(s) = \int \Phi(x) |x|^s d^\times x$.

We have the following:
\begin{proposition}\label{gamma}
 \begin{equation}\label{FE} \widecheck{(\mathscr F_{ {{{\lambda}}} , s} f)}(\chi) = \gamma(\chi,\check\lambda,1-s,\psi) \check f(\chi).\end{equation}
\end{proposition}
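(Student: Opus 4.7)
The plan is to use the basic fact that the Mellin transform converts convolution on the torus $T$ into pointwise multiplication on the character lattice. Once this is in hand, the statement reduces to a single identification: the Mellin transform of the convolution kernel $\check\lambda_* D_s$, taken at a character $\chi$ of $T$, should equal the Tate $\gamma$-factor $\gamma(\chi\circ\check\lambda,1-s,\psi)$. Concretely, I will first establish the factorization
$$\widecheck{(\mathscr F_{\check\lambda,s} f)}(\chi) \;=\; \widecheck{(\check\lambda_* D_s)}(\chi)\cdot \check f(\chi),$$
and then compute the first factor on the right.

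The factorization is routine when every integral in sight converges: for $f\in\mathcal S(T)$ and $s$ in the half-plane where $D_s$ is an actual measure, Fubini on $\int_T (\mathscr F_{\check\lambda,s}f)(t)\,\chi^{-1}(t)$, followed by the substitution $t\mapsto t\check\lambda(x)$, produces the product formula; the general case follows by analytic continuation in $s$ and by the distributional extension to the full space $\mathcal M_0(T)$.

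For the second step, the push-forward immediately unfolds to
$$\widecheck{(\check\lambda_*D_s)}(\chi) \;=\; \int_{F^\times} (\chi\circ\check\lambda)^{-1}(x)\,|x|^s\,\psi(x)\,d^\times x,$$
so, setting $\eta:=\chi\circ\check\lambda$ and interpreting $\gamma(\chi,\check\lambda,1-s,\psi)$ as the Tate local gamma factor $\gamma(\eta,1-s,\psi)$, the claim reduces to the classical identity
$$\int_{F^\times} \eta^{-1}(x)\,|x|^s\,\psi(x)\,d^\times x \;=\; \gamma(\eta,1-s,\psi)$$
for any quasi-character $\eta$ of $F^\times$. This is Tate's local functional equation written in distributional form: the additive Fourier transform of the homogeneous distribution $\eta^{-1}\,|\cdot|^s\,d^\times x$ on $F$ equals $\gamma(\eta,1-s,\psi)\,\eta\,|\cdot|^{1-s}\,d^\times x$, and the displayed integral is the value of this Fourier transform at $y=1$.

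The one point that genuinely requires care — and which I expect to be the main obstacle — is the regularization: neither $\psi$ nor the homogeneous distributions $\eta^{-1}|\cdot|^s$ are Schwartz, so the integrals above do not converge absolutely on all of $\mathcal M_0(T)$. The standard remedy is to verify the identity first in a common domain of absolute convergence (large $\Re(s)$, and $\chi$ twisted suitably far off the unitary axis) and then meromorphically continue in $s$ and in the complex parameters of $\chi$, using that $\gamma(\eta,1-s,\psi)$ is itself meromorphic in these variables; equivalently, one interprets both sides from the start as tempered distributions via Tate's theory. No ideas beyond the classical local theory of Godement--Tate are needed.
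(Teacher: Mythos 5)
Your proof is correct, but it follows a structurally different route from the paper's. The paper reduces immediately to $T=\Gm$ with $\check\lambda=\mathrm{id}$ by equivariance, and then carries the test measure $f=\Phi\,d^\times t$ through the entire computation: it rewrites $\widecheck{\mathscr F_{\check\lambda,s}f}(\chi)$ as the Tate zeta integral $Z(\hat\varphi,\chi^{-1},s)$ for $\varphi(u)=|u|^{s-1}\Phi(u^{-1})$, applies the functional equation $\gamma(\chi,1-s,\psi)Z(\varphi,\chi,1-s)=Z(\hat\varphi,\chi^{-1},s)$, and observes that $Z(\varphi,\chi,1-s)=\check f(\chi)$. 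You instead peel $f$ off at the start, using that Mellin turns multiplicative convolution on $T$ into pointwise multiplication, so the problem reduces to computing the Mellin transform of the kernel $\check\lambda_*D_s$ alone; you then recognize $\int\eta^{-1}(x)|x|^s\psi(x)\,d^\times x$ (with $\eta=\chi\circ\check\lambda$) as the evaluation at $y=1$ of the Fourier transform of the homogeneous distribution $\eta^{-1}|\cdot|^{s-1}dx$, which equals $\gamma(\eta,1-s,\psi)\,\eta|\cdot|^{-s}dy$ by Tate's functional equation in distributional form. Your approach is arguably cleaner: the reduction to $\Gm$ is automatic (the Mellin transform of $\check\lambda_*D_s$ manifestly depends only on $\eta$), and the dependence on $f$ is decoupled from the dependence on the kernel. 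The tradeoff is that you then deal head-on with the Fourier transform of a homogeneous, non-Schwartz distribution and its evaluation at a point, whereas the paper keeps both sides as Tate zeta integrals against a function $\varphi$ built from $\Phi$; in practice both require the same meromorphic-continuation/tempered-distribution bookkeeping, which the paper also elides, so neither is obviously more rigorous as written. Your remark that the main point of care is the regularization (verify in a half-plane of absolute convergence, then continue meromorphically in $s$ and in the parameter of $\chi$) is exactly right and matches the standing convention the paper sets up just before the proposition.
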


Here, 
$$\gamma(\chi,\check\lambda,1-s,\psi) = \frac{\epsilon(\chi,\check\lambda,1-s,\psi) L(\chi^{-1},\check\lambda,s)}{L(\chi,\check\lambda,1-s)}$$ 
is the gamma-factor of the local functional equation, for the local $L$-function attached to the representation $\check\lambda: \check T\to \Gm$ of the dual torus.

\begin{proof}
The transform is equivariant, and hence clearly satisfies the above formula, for some scalar in place of $\gamma(\chi, \check\lambda,1-s,\psi)$. Moreover, by the definition of $\mathscr F_{\check\lambda,s}$, this scalar has to depend only on the pull-back of $\chi$ via $\check\lambda$. Therefore, we are reduced to the case of $T=\Gm$, with $\check\lambda=$ the identity cocharacter.

In this case, let $f = \Phi d^\times t$. By definition,
 $$ \widecheck{\mathscr F_{\lambda, s} f}(\chi)  = \int \mathscr F_{\check\lambda, s} \Phi (t) \chi^{-1}(t) d^\times t = \int \left(\int \Phi(u^{-1}) |u|^s \psi(ut) d^\times u  \right) \chi^{-1}(t) |t|^s d^\times t. $$
 The inner integral is the (classical) Fourier transform of the function $\varphi:u\mapsto |u|^{s-1} \Phi(u^{-1})$, defined as $\hat\varphi(t) = \int \varphi(u) \psi(ut) du$. The whole expression is, therefore, the zeta integral $Z(\hat\varphi,\chi^{-1},s)$. Recall the functional equation\footnote{This convention on local gamma factors was misstated in \cite[Lemma 2.12]{SaBE1}, where $\psi^{-1}$ instead of $\psi$ was used to define Fourier transforms. As a result, $\psi$ should be replaced by $\psi^{-1}$ in all gamma factors of \cite{SaBE1, SaBE2}.} \cite{Tate-Corvallis}
 $$ \gamma(\chi,s,\psi) Z(\varphi,\chi,s) = Z(\hat\varphi, \chi^{-1},1-s).$$
 
 Thus, 
 $$Z(\hat\varphi,\chi^{-1},s) = \gamma(\chi,1-s,\psi) Z(\varphi, \chi, 1-s) = $$
 $$=\gamma(\chi,1-s,\psi) \int |u|^{s-1} \Phi(u^{-1}) \chi(u) |u|^{1-s} d^\times u = \gamma(\chi, 1-s,\psi) \check f(\chi).$$
 \end{proof}
 
\section{Non-standard test measures, and trivialization of Kuznetsov orbital integrals}\label{sec:Kuznetsov}

\subsection{Trivialization of push-forwards for the Kuznetsov formula} \label{sstrivialization}

Let $G=\SL_n$, $\GL_n$ or $\PGL_n$, and let $N$ be the subgroup of upper triangular unipotent matrices. We will consider $\psi$ as a character of $N$, by composing with the sum of the $(i, i+1)$-entries of the matrix ($1\le i\le n-1$). For $\SL_n$, there is also a choice of generic character which is non-conjugate to this, but since our comparisions (for example, with the trace formula in the case of $n=2$) will only involve stable orbital integrals, this would not change anything in the formulas. 

Let $Y\subset G$ be the variety of anti-diagonal elements. The action map $N\times N\times Y\to G$ is an isomorphism with the open Bruhat cell, which we temporarily denote by $Z$. 
A function on $Z$ satisfying $\Phi(n_1 g n_2) = \psi(n_1 n_2) \Phi(g)$ can be thought as a section of a line bundle $\mathcal L_{\psi\times\psi}$ over the points of the variety $N\backslash Z/N$. Let $f$ denote a measure on $N\backslash Z/N$ that is valued on $\mathcal L_{\psi\times\psi}$. The section $Y\xrightarrow\sim N\backslash Z/N$ allows us to trivialize this line bundle, and \emph{consider $f$ as a measure on $Y$}. Explicitly, in the case where these measures are absolutely continuous with respect to a ``Haar'' (=invariant under the action of the diagonal torus) measure on $Y$, we have $f = \Phi dy$, for a function $\Phi$ as above, and the corresponding scalar-valued measure $f_Y$ on $Y$ is:
\begin{equation}
 f_Y (y) = \Phi(y) dy.
\end{equation}
In the rest of this paper, we will not be distinguishing, notationally, between $f$ and $f_Y$, that is: we will always be considering such a measure $f$ as a scalar-valued measure on the variety $Y$ of anti-diagonal elements (which we will also be identifying with the quotient $N\backslash Z/N$).

Let, now, $\mathcal L_\psi$ denote the line bundle defined (similarly) over $N\backslash G$ by the character $\psi$ with respect to the left multiplication/right action of $N$ on functions; explicitly, its sections are functions $\Phi$ on $G$ which satisfy $\Phi(ng) = \psi(n) \Phi(g)$. Let $\mathcal S(N,\psi\backslash G)$ denote the space of Schwartz measures valued in this line bundle. There is a canonical ``twisted push-forward'' map
\begin{equation}\label{pushforward-Kuz} p_!: \mathcal S(N,\psi\backslash G) \hat\otimes \mathcal S(N,\psi^{-1}\backslash G) \to \mathcal M(N\backslash Z/N, \mathcal L_{\psi\times\psi}) \xrightarrow{\sim} \mathcal M(Y)
\end{equation}
(measures on $Y$), with the last isomorphism provided by the comments above. This map is dual to the pull-back of sections of the dual line bundle:
$$ \Gamma(N\backslash Z/N, \mathcal L_{\psi^{-1}\times\psi^{-1}}) \xrightarrow{p^*} \Gamma(N^2\backslash \pi^{-1} Z, \mathcal L_{\psi^{-1}} \boxtimes \mathcal L_\psi),$$
where $p: G \times G \to G$ is the map $(g_1, g_2) \mapsto g_1 g_2^{-1}$ (descending to a map $N\backslash G\times N\backslash G \to N\backslash G/N$). We can also pull-back elements of $\Gamma(N\backslash Z/N, \mathcal L_{\psi^{-1}\times\psi^{-1}})$ to $(N,\psi^{-1})$-equivariant sections of $\mathcal L_{\psi^{-1}}$ over $N\backslash Z$, and thus think of the twisted push-forward as a map 
\begin{equation}
 p_!: \mathcal S(N,\psi\backslash G)\to \mathcal M(N\backslash Z/N, \mathcal L_{\psi\times\psi}) \xrightarrow{\sim} \mathcal M(Y).
\end{equation}

Moreover, identifying $Y$ with $N\backslash Z/N$, consider the embedding
$$ Y\hookrightarrow \mathfrak C:= N\backslash G \sslash N,$$
so that we can identify measures on $Y$ as measures on $\mathfrak C$ (simply by letting the mass of the complement be zero).
It will also be convenient to identify $\mathfrak C$ as an embedding of $A = B/N$, the universal Cartan of $G$, where $B$ is the Borel subgroup of upper triangular matrices. To perform this identification, we identify $A$ with $Y$ via the map
\begin{equation}
 a\mapsto wa,
\end{equation}
where $w$ is a representative for the longest element of the Weyl group. (The precise choice of $w$ is not very important, and will only be fixed when necessary.) Under this identification, $\mathfrak C$ is the toric embedding of $A$ associated to the cone of positive coroots, i.e., a limit $\lim_{t\to 0} \check\lambda(t)$ exists in $\mathfrak C$ if and only if $\check\lambda$ is a cocharacter into $A$ in the cone of positive coroots.

\begin{lemma}
The map \eqref{pushforward-Kuz} has image in the space of finite measures on $\mathfrak C$.
\end{lemma}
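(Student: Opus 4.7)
The plan is to bound the total variation of $p_!(\Phi_1\otimes\Phi_2)$ as a measure on $Y$ by the product $\|\Phi_1\|_{L^1(N\backslash G)}\cdot \|\Phi_2\|_{L^1(N\backslash G)}$, which is finite for Schwartz measures; extending by zero to $\mathfrak{C}\smallsetminus Y$ then yields the desired finite measure on $\mathfrak{C}$. Heuristically this is the bound ``push-forward contracts total variation'', made available in our equivariant setting by the unitarity of $\psi$.

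Concretely, I would start from the definition of $p_!$ as adjoint to the pull-back of sections. Given a bounded measurable function $f$ on $Y$ with $\|f\|_\infty\le 1$, extend it to a section
\[
F\in\Gamma(N\backslash Z/N,\mathcal{L}_{\psi^{-1}\times\psi^{-1}})
\]
by the rule $F(n_1 y n_2):=\psi^{-1}(n_1)\psi^{-1}(n_2)f(y)$. Since $|\psi|=1$, one has the pointwise equality $|F(g)|=|f(y)|\le 1$ on $Z$; this is exactly the statement that the trivialization of $\mathcal{L}_{\psi\times\psi}$ along $Y$ is a fiberwise isometry. Pulling back through $p$ gives a bounded section of $\mathcal{L}_{\psi^{-1}}\boxtimes\mathcal{L}_\psi$ on $p^{-1}(Z)\subset (N\backslash G)^2$, which pairs against the measure $\Phi_1\otimes\Phi_2$ to produce a scalar:
\[
\langle p_!(\Phi_1\otimes\Phi_2),\,f\rangle \;=\; \int_{(N\backslash G)^2\cap p^{-1}(Z)} (p^*F)\cdot(\Phi_1\otimes\Phi_2).
\]
The complement $G\smallsetminus Z$ is a union of smaller Bruhat cells, hence a closed subvariety of positive codimension, so $p^{-1}(G\smallsetminus Z)$ is negligible against any smooth measure and the restriction to $p^{-1}(Z)$ costs nothing. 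The triangle inequality then immediately yields $|\langle p_!(\Phi_1\otimes\Phi_2),\,f\rangle|\le \|\Phi_1\|_{L^1}\|\Phi_2\|_{L^1}$, and taking the supremum over admissible $f$ gives the total-variation bound.

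The only remaining point is to verify that Schwartz measures on $N\backslash G$ are in $L^1$: this is automatic in the non-Archimedean case (compact support), and in the Archimedean case follows from the fact that rapidly decreasing smooth measures on a quasi-affine variety are integrable against any choice of smooth measure. The proof extends from pure tensors to the completed tensor product $\mathcal{S}(N,\psi\backslash G)\hat{\otimes}\mathcal{S}(N,\psi^{-1}\backslash G)$ by continuity. The only thing which requires even a moment's thought --- and it is more bookkeeping than obstacle --- is keeping the conventions on $\mathcal{L}_{\psi\times\psi}$ vs.\ its dual, and on left- vs.\ right-equivariance, straight; once these are in place the estimate is formal.
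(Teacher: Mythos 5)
Your argument is correct and is essentially the paper's proof written out in duality language: the paper simply observes that, since $|\psi|=1$, the twisted push-forward of $f$ is dominated by the (untwisted) push-forward of the positive Schwartz measure $|f|$, which is finite because Schwartz measures have finite total mass --- exactly the total-variation bound $\|\Phi_1\|_{L^1}\|\Phi_2\|_{L^1}$ you obtain by pairing against bounded sections. The extra remarks (negligibility of the small Bruhat cells, extension by continuity to $\hat\otimes$) are harmless bookkeeping and do not change the substance.
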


\begin{proof}
 Let $f\in  \mathcal S(N,\psi\backslash G) \hat\otimes \mathcal S(N,\psi^{-1}\backslash G)$. The absolute value $|f|$ is a positive Schwartz measure on $N\backslash G \times N\backslash G$, and the push-forward of $f$, considered as a measure on $\mathfrak C$, is bounded by the push-forward of $|f|$.
\end{proof}

We will denote the image of \eqref{pushforward-Kuz} by $\mathcal S(N,\psi\backslash G/N,\psi)$, considered as a space of measures on $\mathfrak C$.

Finally, we discuss the relation between \emph{functions, measures, and half-densities}: Fixing a Haar measure on $N$, the orbital integrals of a Schwartz function $\Phi\in \mathscr F(G)$:
$$ O_t (\Phi) = \int_{N\times N} \Phi(n_1 w t n_2) \psi^{-1}(n_1 n_2) dn_1 dn_2$$
define a function on $A$ (identified as above with the anti-diagonal $Y$). On the other hand, fixing also a Haar measure on $G$, the push-forward $f$ of $\Phi dg$ to $\mathcal S(N,\psi\backslash G/N,\psi)$ is a measure on $A\simeq Y\subset\mathfrak C$. It is easy to compute the integration formula:
\begin{equation}\label{integration}
 f(t) = \delta(t) O_t (\Phi) dt,
\end{equation}
for some Haar measure $dt$ on $A$. 

In some cases, it will be easier to work with \emph{half-densities}, instead of measures. These are sections of the complex line bundle that is obtained from the positive square root of the $\RR^\times$-torsor that gives rise to the bundle of real-valued measures. We will denote Schwartz half-densities by $\mathcal D$. For example, a Schwartz half-density on a group $G$ is equal to $\Phi (dg)^\frac{1}{2}$, where $(dg)^\frac{1}{2}$ is a Haar half-density. 

As with functions, there is no natural push-forward of half-densities for an arbitrary map $X\to \mathfrak C$ of spaces. However, here that our space is a quotient by a group action which is generically free, it is natural to define the push-forward map
$$p_!: \mathcal D(G) \to \mathcal D(N,\psi\backslash G/N,\psi)$$
(where $\mathcal D(N,\psi\backslash G/N,\psi)$ is, by definition, a space of half-densities on $A\subset \mathfrak C$) by integrating over the fibers against our fixed Haar half-density on $N\times N$. Based on \eqref{integration}, it will be given 
by the formula
\begin{equation}\label{pushf-densities}
 p_! (\Phi (dg)^\frac{1}{2}) = \delta^\frac{1}{2}(t) O_t (\Phi) (dt)^\frac{1}{2}.
\end{equation}

As with orbital integrals, half-densities are only defined densely on $N\backslash G\sslash N$, namely on the open subset $A\simeq Y$.

\subsection{Non-standard test measures and densities} \label{ssnonstandard}

We will need to extend the space $\mathcal S(N,\psi\backslash G)$ of Schwartz Whittaker measures, in order to accommodate generating series for local $L$-functions. Actually, the space we are interested in extending is the space $\mathcal S(N,\psi\backslash G/N,\psi)$ of Kuznetsov orbital integrals, but sometimes this extension will just come from an extension of $\mathcal S(N,\psi\backslash G)$. Here are the cases that we will be concerned with in this paper:

\subsubsection{Test measures for the standard $L$-function of $\GL_n$.}

Let $G=\GL_n$, and consider the space $\mathcal S(\Mat_n)$ of Schwartz measures on $n\times n$-matrices, for some $s\in \CC$. Consider the twisted push-forward map:
\begin{equation}\label{fromMat} \mathcal S(\Mat_n) \to \mathcal M(N,\psi\backslash G),\end{equation}
composed with the twisted push-forward to $\mathfrak C = N\backslash G\sslash N$, which was defined via a trivialization in the previous subsection:
$$ p_!: \mathcal M(N,\psi\backslash G) \to \mathcal M(\mathfrak C).$$
The image of the composition of the two maps will be denoted by 
$$\mathcal S^-_{L(\Std, \frac{n+1}{2})}(N,\psi\backslash G/N,\psi).$$
 The measures in $\mathcal S^-_{L(\Std, \frac{n+1}{2})}(N,\psi\backslash G/N,\psi)$ are finite, because they are bounded by the corresponding push-forward measures to $N\backslash G\sslash N$ without the character. The reason for the index $L(\Std, \frac{n+1}{2})$ is the following: 
 
 \begin{proposition}\label{basic-std}
 Let $F$ be non-Archimedean, with ring of integers $\mathfrak o$. Let $\varphi_0$ be the Whittaker measure which is the product of a Haar (=invariant) measure on $N\backslash G$ by the Whittaker function which is equal to $1$ on $K$ and zero off $NK$. The image of \eqref{fromMat} contains a $K=\GL_n(\mathfrak o)$-invariant Whittaker measure $\varphi$, which admits the following description:
 
 For every $i\ge 0$, the restriction of $\varphi$ to $\{g\in \GL_n | \val(\det(g)) = i\}$ is equal to 
 $$q^{-i\cdot \frac{n+1}{2}} h_{\Sym^i}\star \varphi_0,$$ 
 where $h_{\Sym^i}$ is the element of the unramified Hecke algebra that corresponds, under the Satake isomorphism, to the trace of the $i$-th symmetric power of the standard representation, and $\star$ denotes the convolution operator
 $$ h\star \varphi(x) = \int_G h(g) \varphi(xg^{-1}) dg.$$
 For $i<0$, the restriction of $\varphi$ to the corresponding set is zero.
 \end{proposition}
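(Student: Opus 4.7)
The strategy is to exhibit $\varphi$ explicitly as the image of the natural $K$-invariant element $\Phi_0 := \mathbbm{1}_{\Mat_n(\mathfrak{o})} \cdot d^+g \in \mathcal{S}(\Mat_n)$, where $d^+g$ is the self-dual additive Haar measure on $\Mat_n$. Using $d^+g = |\det g|^n\, dg$ on $\GL_n$ (with $dg$ a Haar measure), the twisted push-forward \eqref{fromMat} unfolds to
\begin{equation*}
\varphi(g) \;=\; |\det g|^n \left(\int_N \mathbbm{1}_{\Mat_n(\mathfrak{o})}(ng)\, \psi^{-1}(n)\, dn\right) d\bar g,
\end{equation*}
with $d\bar g$ an invariant measure on $N\backslash G$. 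Right $K$-invariance of $\Phi_0$ is inherited by $\varphi$, and since $\det(ng)=\det g$, the requirement $ng\in\Mat_n(\mathfrak{o})$ immediately forces $\varphi = 0$ on $\{\val(\det g)<0\}$, giving the last claim.

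For the formula on $\{\val\det=i\}$, both sides are right $K$-invariant $(N,\psi)$-equivariant measures on $G$ supported there. By Casselman--Shalika such a measure is determined by its values on the dominant cocharacters in $A$, equivalently by its Mellin/Satake transform $f\mapsto\check f(\pi) := \langle f, W^\circ_\pi\rangle$ against the normalized unramified Whittaker function of an arbitrary unramified $\pi$; so it suffices to match these transforms. Summing the candidate right-hand side over $i$, the expected Mellin transform is
\begin{equation*}
\sum_{i\geq 0} q^{-i(n+1)/2} \tr\Sym^i(c_\pi)\cdot \check\varphi_0(\pi) \;=\; L\bigl((n+1)/2,\pi,\Std\bigr)\cdot \check\varphi_0(\pi),
\end{equation*}
using the power series expansion $L(s,\pi,\Std)=\sum_{i\geq 0} q^{-is}\tr\Sym^i(c_\pi)$, so the task reduces to verifying the same identity for $\check\varphi(\pi)$.

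This is a direct Godement--Jacquet-type calculation. Unwinding the pairing, $\check\varphi(\pi) = \int_G \mathbbm{1}_{\Mat_n(\mathfrak{o})}(g)\, W^\circ_\pi(g)\, |\det g|^n\, dg$. Iwasawa $G=NAK$ and right $K$-invariance reduce this to an integral over $A$: writing $a=\diag(\varpi^{\lambda_1},\ldots,\varpi^{\lambda_n})$, the inner integral $\int_N \psi^{-1}(n)\mathbbm{1}_{\Mat_n(\mathfrak{o})}(na)\, dn$ factors over the entries of $n$, and each simple-root entry $n_{j,j+1}$ contributes a $\psi$-twist integral that vanishes unless $\lambda_{j+1}\leq 0$. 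Combined with $na\in\Mat_n(\mathfrak{o})$, which already forces $\lambda_i\geq 0$, this pins $\lambda_2=\cdots=\lambda_n=0$; for $a=\diag(\varpi^\lambda,1,\ldots,1)$ with $\lambda\geq 0$ the inner integral equals $1$. Multiplying the three contributions $|\det a|^n=q^{-n\lambda}$, the Iwasawa modulus $\delta^{-1}(a)=q^{(n-1)\lambda}$, and the Casselman--Shalika value $W^\circ_\pi(a)=\delta^{1/2}(a)\,s_{(\lambda,0,\ldots,0)}(c_\pi)=q^{-(n-1)\lambda/2}\,h_\lambda(c_\pi)$ yields the clean factor $q^{-\lambda(n+1)/2}\,h_\lambda(c_\pi)$; summation over $\lambda\geq 0$ produces exactly $L((n+1)/2,\pi,\Std)\cdot \check\varphi_0(\pi)$, completing the match. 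The only (mild) obstacle is the bookkeeping of the shift $(n+1)/2 = n-(n-1)/2$, which emerges as the cancellation between the $|\det|^n$ from the additive measure and the $\delta^{1/2}$ in Casselman--Shalika --- i.e.\ the familiar Godement--Jacquet normalization for the standard $L$-function.
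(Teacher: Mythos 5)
Your argument is correct, and it takes a genuinely different route from the paper's. The paper does not work in the Whittaker model at all: it observes that the asserted identity follows from an identity of bi-$K$-invariant measures upstairs, namely $\mathbbm{1}_{\Mat_n(\mathfrak o)}\,d^+g$ versus $\sum_i (\text{scalar})\, h_{\Sym^i}\star(\mathbbm{1}_{\GL_n(\mathfrak o)}dg)$, and verifies that identity by pairing against the unramified (zonal spherical) matrix coefficient $M_\chi$, i.e.\ by the unramified Godement--Jacquet zeta-integral computation, which it simply cites from \cite{GJ}. You instead push forward first and characterize $(N,\psi)$-equivariant, right $K$-invariant measures by their pairings against the unramified Whittaker functions $W^\circ_{\pi_\chi}$, then redo the unramified computation by hand via Iwasawa decomposition and the Casselman--Shalika formula; your exponent bookkeeping ($|\det|^n\cdot\delta^{-1}\cdot\delta^{1/2}$ giving $q^{-\lambda(n+1)/2}$, and the simple-root $\psi$-integrals pinning $\lambda_2=\cdots=\lambda_n=0$) checks out and yields exactly $L(\pi,\Std,\tfrac{n+1}{2})\cdot\check\varphi_0(\pi)$, consistent with the statement. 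What the paper's route buys is brevity and a slightly stronger statement (the identity already holds on $G$, before the non-injective Whittaker push-forward, by appeal to a standard reference); what yours buys is a self-contained computation located exactly where the proposition lives, at the price of two standard steps you pass over quickly and should at least flag: (i) the injectivity of the pairing against $\{W^\circ_{\pi_\chi}\}$ uses that such a measure is supported, in its $A$-values, on dominant coweights (the Casselman--Shalika support argument) together with linear independence of Schur functions, with convergence handled shell-by-shell (each shell has compact support modulo $N$, so the pairing there is a finite sum and the shells are separated by twisting by $|\det|^s$); and (ii) the identity $\langle h_{\Sym^i}\star\varphi_0, W^\circ_\pi\rangle=\tr\Sym^i(c_\pi)\,\check\varphi_0(\pi)$ requires the small adjunction computation relating $\star$ to the right-regular Hecke action, with the paper's Satake normalization (the paper's remark on $h\cdot\varphi$ versus $h^\vee\star\varphi$ is exactly about this point).
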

 
 \begin{remark}
  The Satake transform $h\mapsto \check h \in \CC[\check G]^{\check G}$  (where $\check G$ denotes the dual group) is normalized so that for $v_{K,\chi} = $ the unramified vector in a principal series representation $\pi_\chi$ obtained by normalized induction from an unramified character $\chi$ of the Borel subgroup, we have $\pi_{\chi}(h) v_{K,\chi} = \check h(\check\chi) v_{K,\chi}$, where $\check\chi$ is the element of the dual Cartan $\check A\subset\check G$ with $\chi({\check\lambda}(\varpi)) = {\check\lambda}(\check\chi)$ for any coweight $\check\lambda$ into $A$ and $\varpi$ is a uniformizer in $\mathfrak o$. 
 \end{remark}
 
 \begin{remark}
  A small detail here: the right regular action of an element $h$ in the Hecke algebra on Whittaker functions or measures is defined as 
  $$h \cdot \varphi(x) = \int_G h(g) \varphi(xg) dg,$$
  so it is related to the convolution operator defined above by $h\cdot \varphi = h^\vee \star \varphi$, where $h^\vee(g)=h(g^{-1})$.
 \end{remark}

 \begin{proof} The measure
  $\varphi$ is simply the image of $\Phi=$ the characteristic function of $\Mat_n(\mathfrak o)$ times an additive Haar measure (which is equal to $|\det|^n$ times a multiplicative Haar measure), and $\varphi_0$ is the image of $\Phi_0=$ the characteristic function of $\GL_n(\mathfrak o)$ times a multiplicative Haar measure. We may fix the Haar measure $dg$ on $G$ with $dg(G(\mathfrak o))=1$. The result now follows from Godement--Jacquet theory. More precisely, the statement of the proposition is equivalent to the statement that 
  $$ \Phi = \sum_{i\ge 0} q^{i\cdot \frac{n-1}{2}} h_{\Sym^i}\star \Phi_0,$$
  which in turn is equivalent to the statement that if 
  $$M_\chi (g) = \left< \pi_\chi(g) v_{K,\chi}, v_{K,\chi^{-1}}\right>$$ 
  is the unramified matrix coefficient for the principal series representation $\pi_\chi$ as in the remark above, normalized so that $M_\chi(1) = 1$, and assuming that the central character of $\pi_\chi$ is large enough so that the integral below converges, we have:
  $$ \int_{\GL_n} \Phi(g) \cdot M_\chi(g) dg = \sum_{i\ge 0} q^{i\cdot \frac{n-1}{2}} \int_{\GL_n} \Phi_0(g)  \cdot \left< \pi_\chi(g) \pi_\chi(h_{\Sym^i}) v_{K,\chi},v_{K,\chi^{-1}} \right> dg =$$ $$= \sum_{i\ge 0} q^{i\cdot \frac{n-1}{2}} \tr_{\Sym^i} (\check\chi) = \frac{1}{\det(I- q^{i\cdot \frac{n-1}{2}}\Std(\check\chi))} = L(\pi_\chi,\Std, -\frac{n-1}{2}).$$
  This is proven in  \cite{GJ}.
 \end{proof}
 
The image of $\varphi$ in $\mathcal S^-_{L(\Std, \frac{n+1}{2})}(N,\psi\backslash G/N,\psi)$ will be called the \emph{basic vector} of this space, and denoted by $f^0_{L(\Std, \frac{n+1}{2})}$. (It is determined up to a choice of Haar measure, which we do not fix in this paper; more precise versions of the theorems on basic vectors, fixing this constant, appear in \cite{SaTransfer1, SaTransfer2}.)

 One can of course multiply everything by $|\det|^s$ and construct a space $\mathcal S^-_{L(\Std, s +\frac{n+1}{2})}(N,\psi\backslash G/N,\psi)$, with analogous properties as in the proposition above. 

 When working with Fourier transforms on $\Mat_n$, it is more convenient to work with half-densities instead of measures. Notice that a Schwartz half-density on $\Mat_n$ is of the form $|\det g|^\frac{n}{2}\Phi(g) (dg)^\frac{1}{2}$, where $(dg)^\frac{1}{2}$ is a Haar half-density on $G$ and $\Phi$ is a Schwartz function. Its pushforward to $A\subset \mathfrak C$ under the twisted push-forward map $p_!$ map defined in \eqref{pushf-densities} will be:
 \begin{equation}\label{pushf-Matn}
  |\det t|^\frac{n}{2} \delta^\frac{1}{2}(t) (dt)^\frac{1}{2} \cdot O_t(\Phi).
 \end{equation}
 Explicitly, if $t = \diag(t_1, \dots, t_n)$, this is 
 $$ |t_1|^{n-\frac{1}{2}} |t_2|^{n-\frac{3}{2}} \cdots |t_n|^\frac{1}{2} \int_{N\times N} \Phi(n_1 w t n_2) \psi^{-1}(n_1 n_2) dn_1 dn_2.$$
 This will be useful for us in \S \ref{sec:standard}. Of course, the image of $\mathcal D(\Mat_n)$ under this twisted push-forward map will be denoted by 
 $$ \mathcal D^-_{L(\Std, \frac{1}{2})}(N,\psi\backslash G/N,\psi).$$

 Finally, consider the dual space $\Mat_n^\vee$. It can be identified with $\Mat_n$ via the trace pairing, but to make this compatible with the $G\times G$-action we need to let $(g_1,g_2)\in G$ act on $X\in \Mat_n^\vee$ (represented by a matrix) via $X\cdot (g_1,g_2) = g_2^{-1} X g_1$. The map $g\mapsto g^{-1} \in \GL_n \subset \Mat_n^\vee$, then, becomes an equivariant embedding.
 
 Thus, we have a similar push-forward map  
 $$|\det|^{-s}\mathcal S(\Mat_n^\vee) \to \mathcal M(\C).$$
 Here $\det$ is the determinant on $\GL_n$, when embedded equivariantly into $\Mat_n^\vee$; under the identification of $\Mat_n^\vee$ with $\Mat_n$, this is $\det^{-1}$ --- hence the negative power in the determinant.
 We will denote the image of this map by 
 $$\mathcal S^-_{L(\Std^\vee, s+\frac{n+1}{2})}(N,\psi\backslash G/N,\psi).$$
 
 Proposition \ref{basic-std} holds for this space, if we replace the standard representation by its dual.

 \subsubsection{Test measures for $\SL_2$ or $\PGL_2$.}
 
 Now let $G=\SL_2$ or $\PGL_2$. We will only discuss non-standard spaces of test measures associated to products of the adjoint $L$-function, when $G=\SL_2$ and the standard $L$-function, when $G=\PGL_2$.
 
 Let $r = \bigoplus_i r_i$ be an irreducible algebraic representation of  the (connected) dual group $\check G$, decomposed in terms of its irreducible constituents, which we assume to be all isomorphic to the adjoint representation $\Ad=\Sym^2\otimes\det^{-1}$ if $G=\SL_2$, i.e., the irreducible represenation with highest weight $\check\alpha$, and isomorphic to the standard representation (highest weight $\frac{\check\alpha}{2}$) if $G=\PGL_2$. Let $\underline s =(s_i)_i$ be a collection of complex numbers, one for each $r_i$, we define a space of test measures 
 $$ \mathcal S^-_{L(r,\underline s)} (N,\psi\backslash G/N,\psi) = \mathcal S^-_{\prod_i L(r_i,s_i)} (N,\psi\backslash G/N,\psi),$$
 containing $\mathcal S(N,\psi\backslash G/N,\psi)$, as follows:
 
 Recall from \S \ref{sstrivialization} that $\mathcal S(N,\psi\backslash G/N,\psi)$ has been  defined as a space of measures on $\mathfrak C = N\backslash G\sslash N$. We identify $\mathfrak C$ with $\Ga$, in a way compatible with the sections
$$ \tilde\zeta = \begin{pmatrix} & -\zeta^{-1} \\ \zeta \end{pmatrix} \,  \, \, (\mbox{for }\SL_2),$$
 $$\tilde \xi = \begin{pmatrix} & -1 \\ \xi \end{pmatrix} \, \,\, (\mbox{for }\PGL_2)$$
 over $\Gm$. 

 We let $ \mathcal S^-_{L(r,\underline s)} (N,\psi\backslash G/N,\psi)$ be the space of measures on $\mathfrak C = \Ga$ which on any compact set coincide with elements of $\mathcal S(N,\psi\backslash G/N,\psi)$, while in a neighborhood of infinity they are of the form  
 \begin{equation}\label{expansionSL2}
  \sum_i C_i(\zeta^{-1}) |\zeta|^{1-s_i} d^\times \zeta,
 \end{equation}
in the case of $G=\SL_2$, and
 \begin{equation}\label{expansionPGL2}
  \sum_i C_i(\xi^{-1}) |\xi|^{\frac{1}{2}-s_i} d^\times \xi,
 \end{equation}
in the case of $G=\PGL_2$, where the $C_i$'s are smooth functions in a neighborhood of zero, \emph{except} when two or more of the $s_i$'s coincide, in which case the corresponding contributions will be $(C_1(\zeta^{-1}) + C_2(\zeta^{-1})\log|\zeta| + C_3(\zeta^{-1}) \log^2|\zeta| +\dots) |\zeta|^{1-s} d^\times \zeta$ (as many summands as occurences of the exponent $s$). 

\begin{remark}
 Under our previous identification of $\mathfrak C$ with an embedding of the universal Cartan $A$, the factors $|\zeta|^1$ and $|\xi|^\frac{1}{2}$ read $\delta^{\frac{1}{2}}(t)$, and are volume factors. On the other hand, the factors $|\zeta|^{-s_i}$ and $|\xi|^{-s_i}$ are equal to $q^{s_i}$ on an element of the form $e^{\check\alpha}(\varpi)$, resp.\ $e^\frac{\check\alpha}{2}(\varpi)$, when $F$ is a non-Archimedean field with residual degree $q$ and uniformizer $\varpi$; these factors are related to the Taylor series expansion of the local $L$-function.  
\end{remark}

The reason for this definition is the following, which can be proven as in \cite[Lemma 5.3]{SaBE1}:

 \begin{proposition}\label{basic-slpgl}
 Let $F$ be non-Archimedean, with ring of integers $\mathfrak o$, and set $K=G(\mathfrak o)$. Let $\varphi_0\in \mathcal S(N,\psi\backslash G)$ be the product of a Haar (=invariant) measure on $N\backslash G$ by the Whittaker function which is equal to $1$ on $K$ and zero off $NK$. Let $$\varphi = \prod_i\left(\sum_j q^{-js_i} h_{\Sym^j r_i}\right)\star \varphi_0,$$
 where $h_{\Sym^j r_i}$ is the element of the unramified Hecke  algebra with Satake transform equal to the trace of the $j$-th symmetric power of $r_i$, and the product over $i$ is the convolution product. This series is locally finite as a series of measures, and hence makes sense as a measure on $N\backslash G$ valued in the line bundle defined by the character $\psi$.
 
 Then, the twisted push-forward of $\varphi$ to $\mathfrak C$ belongs to $\mathcal S^-_{L(r,\underline s)} (N,\psi\backslash G/N,\psi)$. 
 
 \end{proposition}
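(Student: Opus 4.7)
The plan is to follow the strategy of \cite[Lemma 5.3]{SaBE1}: verify the proposition by computing the Mellin transform of the push-forward $p_!\varphi$ on the universal Cartan $A\simeq Y\subset\mathfrak C$ and reading off its behavior near infinity.

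\emph{Local finiteness.} The Hecke element $h_{\Sym^j r_i}$ is supported on the union of $K$-double cosets $K\check\mu(\varpi) K$ with $\check\mu$ ranging over the (finitely many) dominant coweights appearing as weights of $\Sym^j r_i$ when restricted to the canonical maximal torus $\check A$ of $\check G$. Since $\varphi_0$ is supported on $NK$, the Iwasawa decomposition shows that $p_!(h_{\Sym^j r_i}\star\varphi_0)$, under the identification $A\simeq Y$ with coordinate $\zeta$ (resp.\ $\xi$), is supported in $\{|\zeta|\ge q^{cj}\}$ (resp.\ $\{|\xi|\ge q^{cj}\}$) for an explicit positive constant $c$. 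Hence on any compact subset of $\mathfrak C$ only finitely many indices $j$ contribute for each $i$, and only finitely many factors in the convolution product contribute in total; the series therefore defines a measure on $N\backslash G$ valued in $\mathcal L_\psi$.

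\emph{Mellin transform.} For each unramified character $\chi$ of $A$, corresponding to $\check\chi\in\check A$, the Casselman--Shalika formula identifies the Mellin transform $\widehat{p_!\varphi_0}(\chi)$ with an explicit rational expression $W_0(\check\chi)$. Convolution with an element $h$ of the spherical Hecke algebra multiplies the Mellin transform of a spherical Whittaker measure by its Satake transform $\check h(\check\chi)$, so
\[
\widehat{p_!\varphi}(\chi) \;=\; \prod_i \Bigl(\sum_{j\ge 0} q^{-js_i}\,\tr_{\Sym^j r_i}(\check\chi)\Bigr)\, W_0(\check\chi) \;=\; \prod_i L(\check\chi,r_i,s_i)\cdot W_0(\check\chi),
\]
initially in a region of convergence, and then by meromorphic continuation. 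The asymptotic behavior of $p_!\varphi$ as $|\zeta|\to\infty$ (resp.\ $|\xi|\to\infty$) is controlled by the poles of $\prod_i L(\check\chi,r_i,s_i)$: shifting the Mellin-inversion contour past these poles, each factor $L(\check\chi,r_i,s_i)$ contributes a residue of the form $C_i(\zeta^{-1})|\zeta|^{1-s_i}d^\times\zeta$ (resp.\ $C_i(\xi^{-1})|\xi|^{1/2-s_i}d^\times\xi$), the shift from $-s_i$ to $1-s_i$ (resp.\ $1/2-s_i$) being the $\delta^{1/2}$-Jacobian between the $\chi$-parameter on $A$ and the multiplicative coordinate on $\mathfrak C$. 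If several $s_i$'s coincide, the simple poles coalesce into a higher-order pole, and the residue calculus supplies the prefactors $C_2(\zeta^{-1})\log|\zeta|+C_3(\zeta^{-1})\log^2|\zeta|+\cdots$, one extra power of the logarithm per additional coincidence. This matches \eqref{expansionSL2} and \eqref{expansionPGL2}.

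\emph{Main obstacle.} The technical heart of the argument is twofold: first, to identify $W_0(\check\chi)$ precisely enough from the Casselman--Shalika formula to be sure that the factors $C_i$ really are smooth functions of $\zeta^{-1}$ near $0$ and not merely meromorphic; and second, to convert the order of each pole of $\prod_i L(\check\chi,r_i,s_i)$ into exactly the right power of $\log|\zeta|$ in the expansion at infinity. The coincident-exponent case of this contour-shift bookkeeping is delicate but purely formal once the Mellin-side identity above is in hand, and the required manipulations are essentially those of \cite[Lemma 5.3]{SaBE1}, which carry over with minimal change to the present, slightly more general setting.
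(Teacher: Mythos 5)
Your two concrete steps are both problematic, and they are exactly the places where the paper (which gives no argument beyond deferring to \cite[Lemma 5.3]{SaBE1}) hides the real work. First, the support claim underlying your ``local finiteness'' paragraph is false. The dominant coweights occurring among the weights of $\Sym^j r_i$ do not stay away from $0$: for $G=\SL_2$ the representation $\Sym^j(\Ad)$ contains the trivial representation for every even $j$ (and for several factors, e.g.\ $\Sym^j r_1\otimes \Sym^j r_2$, it occurs for every $j$), so $h_{\Sym^j r_i}$ has a nonzero component on the double coset $K$ itself. Concretely, the value of $h_{\Sym^j\Ad}\star\varphi_0$ at the identity is, up to normalization, the multiplicity of the trivial representation in $\Sym^j\Ad$, hence nonzero for all even $j$; likewise the Kuznetsov orbital integrals of $h_{\Sym^j\Std}\star\varphi_0$ for $\PGL_2$ are Kloosterman-type integrals which do not vanish for $|\xi|$ small, only the ``clean'' part of their support sits at $|\xi|=q^j$. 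So the terms do \emph{not} escape to infinity, the bound $\{|\zeta|\ge q^{cj}\}$ fails, and no support argument can give local finiteness: on a fixed neighborhood of the identity coset infinitely many terms are nonzero, and the series has to be summed by convergence for $\Re(s_i)\gg 0$ together with analytic continuation (compare the treatment of the central integration for $\mathcal S^-_{L(\Std,s+\frac32)}$ in \S\ref{ssnonstandard}), or after a regrouping that uses the fact that the $\psi$-twisted integrals pick out the coefficients of the \emph{irreducible characters} in the Satake transform --- and even then infinitely many terms contribute at each fixed $\zeta$.

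Second, the Mellin identity you assert is unjustified. The functional $f\mapsto \check f(\chi)$ on push-forwards to $\mathfrak C$ is not an eigenfunctional of the unramified Hecke algebra: the Hecke eigenfunctionals on the Kuznetsov quotient are the Bessel relative characters $J_{\pi_\chi}$, whose kernels are Bessel functions on the $\zeta$-line, whereas $f\mapsto\check f(\chi\delta_{P(X)}^{1/2})$ is a relative character for the boundary degeneration $N\backslash G$ with \emph{trivial} character (this is the point of \S\ref{sec:degen}); moreover the Casselman--Shalika formula computes values of $W_\chi$ on $A$ upstairs, not the Mellin transform of $p_!\varphi_0$, which is already a generating series of local Kloosterman integrals. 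Hence the displayed identity $\widehat{p_!\varphi}(\chi)=\prod_i L(\check\chi,r_i,s_i)\,W_0(\check\chi)$ does not follow, and repairing it requires either the Whittaker--Plancherel (Bessel) transform combined with the asymptotics/scattering theory that converts spectral multiplication by $L$-factors into the expansions \eqref{expansionSL2}, \eqref{expansionPGL2} at $\zeta\to\infty$, or a direct evaluation and resummation of the orbital integrals of the Hecke translates --- which is what the argument the paper points to actually amounts to; in either case this is the substance of the proof, not ``purely formal bookkeeping''. Note also that membership in $\mathcal S^-_{L(r,\underline s)}(N,\psi\backslash G/N,\psi)$ requires, in addition to the expansion at infinity, that the push-forward agree with an element of $\mathcal S(N,\psi\backslash G/N,\psi)$ on every compact subset of $\mathfrak C$ (in particular its germ at $\zeta=0$); your contour-shift sketch says nothing about this half of the statement.
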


The image of $\varphi$ in $\mathcal S^-_{L(r,\underline{s})}(N,\psi\backslash G/N,\psi)$ will be called the \emph{basic vector} of this space, and denoted by $f^0_{L(r,\underline s)} $. (It is determined up to a choice of Haar measure, which can be fixed globally.)

We may again work with half-densities, in which case the analogous to \eqref{expansionSL2}, \eqref{expansionPGL2} expansions are:
\begin{equation}\label{expansionSL2-densities}
  \sum_i C_i(\zeta^{-1}) |\zeta|^{-s_i} (d^\times \zeta)^\frac{1}{2},
 \end{equation}
in the case of $G=\SL_2$, and
 \begin{equation}\label{expansionPGL2-densities}
  \sum_i C_i(\xi^{-1}) |\xi|^{-s_i} (d^\times \xi)^\frac{1}{2},
 \end{equation}
by the integration formula \eqref{integration}.

The space $\mathcal S^-_{L(\Std,s+\frac{3}{2})}(N,\psi\backslash \PGL_2/N,\psi)$ can be obtained from the space 
 $\mathcal S^-_{L(\Std,s+\frac{3}{2})}(N,\psi\backslash \GL_2/N,\psi)$ defined in the previous subsection by integrating over the center. More precisely:
 
\begin{proposition}
 Consider the  space of measures $\mathcal S^-_{L(\Std,s+\frac{3}{2})}(N,\psi\backslash \GL_2/N,\psi)$ (obtained by twisted push-forward from the space $|\det|^s \mathcal S(\Mat_2)$), and the push-forward
 $$\mathcal S^-_{L(\Std,s+\frac{3}{2})}(N,\psi\backslash \GL_2/N,\psi) \to  \mathcal M(N\backslash \PGL_2\sslash N)$$
 under the natural map $N\backslash G\sslash N\to N\backslash \PGL_2\sslash N$.
 
 This push-forward map converges (has image in locally finite measures) for $\Re(s)\gg 0$, has image equal to $\mathcal S^-_{L(\Std,s+\frac{3}{2})}(N,\psi\backslash \PGL_2/N,\psi)$, and admits holomorphic continuation as a map 
 $$ \mathcal S^-_{L(\Std,s+\frac{3}{2})}(N,\psi\backslash \GL_2/N,\psi) \to \mathcal S^-_{L(\Std,s+\frac{3}{2})}(N,\psi\backslash \PGL_2/N,\psi)$$
 for all $s\in \CC$, in the sense that the image of $|\det|^s f$ is a holomorphic section\footnote{For the topological properties of extended Schwartz spaces and the notion of holomorphic sections, I point the reader to \cite[Appendix A]{SaBE2}.} of $\mathcal S^-_{L(\Std,s+\frac{3}{2})}(N,\psi\backslash \PGL_2/N,\psi)$, for any $f\in \mathcal S^-_{L(\Std,\frac{3}{2})}(N,\psi\backslash \GL_2/N,\psi)$. 
 
 Finally, this map is surjective, and maps the basic vector of $\mathcal S^-_{L(\Std,s+\frac{3}{2})}(N,\psi\backslash \GL_2/N,\psi)$ to the basic vector of $\mathcal S^-_{L(\Std,s+\frac{3}{2})}(N,\psi\backslash \PGL_2/N,\psi)$.
\end{proposition}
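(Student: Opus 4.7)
The plan is to compute the push-forward along the fibre of $A \to A/Z$ explicitly in coordinates, recognise it as a family of Tate zeta integrals, invoke Tate's thesis for convergence and meromorphic continuation, and match the resulting asymptotics against the definition of $\mathcal S^-_{L(\Std, s+3/2)}(N,\psi\backslash\PGL_2/N,\psi)$. Using coordinates $(x, y)$ on the anti-diagonal torus $A \subset \GL_2$ (identified via $\diag(x, y) \mapsto w\diag(x, y)$), the quotient $A \to A/Z$ reads $(x, y) \mapsto \xi = x/y$, matching the $\PGL_2$-parameter $\tilde\xi$, and the fibre is $\{(t\xi, t) : t \in F^\times\}$. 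For $\tilde\Phi \in \mathcal S(\Mat_2)$, the integration formula \eqref{integration} --- together with the conversion factor $|\det|^2$ between additive Haar on $\Mat_2$ and multiplicative Haar on $\GL_2$, and $\delta(\diag(x, y)) = |\xi|$ --- rewrites the image of $|\det|^s \tilde\Phi\, dg_{\mathrm{add}}$ as $|t|^{2s+4} |\xi|^{s+3} O_{(t\xi, t)}(\tilde\Phi)\, d^\times t\, d^\times \xi$ in coordinates $(t, \xi)$ with $y = t$. Integrating over $t$ and substituting $u = t\xi$ gives
\begin{equation*}
(p_Z)_* \bigl[p_!(|\det|^s \tilde\Phi\, dg_{\mathrm{add}})\bigr] = |\xi|^{-(1+s)}\, F(\xi^{-1}, s)\, d^\times \xi, \qquad F(\eta, s) := \int_{F^\times} |u|^{2s+4}\, O_{(u, u\eta)}(\tilde\Phi)\, d^\times u.
\end{equation*}

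Unfolding the orbital integral, $F(\eta, s)$ becomes a smooth integral over compactly-varying $(n_1, n_2)$ of Tate zeta integrals $\int_{F^\times} |u|^{2s+4} \tilde\Phi(u\, g_0(\eta, n_1, n_2))\, d^\times u$, with $g_0 \in \Mat_2$ varying smoothly. By Tate's thesis each converges absolutely for $\Re(s) > -2$ (hence certainly for $\Re(s) \gg 0$) and extends meromorphically to all $s \in \CC$, with poles controlled by the local $L$-factor for $|\cdot|^{2s+4}$, which matches the $L$-factor $L(\Std, s+3/2)$ absorbed into the definition of the target space. Smoothness of $F(\eta, s)$ in $\eta$ at $\eta = 0$, coming from the Schwartz property of $\tilde\Phi$ near the singular locus $\{y = 0\} \subset \Mat_2$, yields the asymptotic $|\xi|^{-(1+s)} C(\xi^{-1})\, d^\times \xi$ near $\xi = \infty$, matching \eqref{expansionPGL2} with $r = \Std$, $s_i = s + 3/2$; on compacts away from $\xi = \infty$ the push-forward is manifestly Schwartz. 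Surjectivity follows because any prescribed Taylor datum of $C$ at $\xi^{-1} = 0$, together with any Schwartz behaviour on compacts, can be realised by a suitable choice of $\tilde\Phi \in \mathcal S(\Mat_2)$.

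For the basic vector, Proposition \ref{basic-std} gives the $\GL_2$-basic vector slice-by-slice on $\{\val\det = i\}$ as $q^{-i(s+3/2)}\, h^{\GL_2}_{\Sym^i \Std} \star \varphi_0^{\GL_2}$, for $i \ge 0$. Integration over $Z$ and the identity $\val\det(zg) = 2\val z + \val\det g$ rewrite this as the sum defining the $\PGL_2$-basic vector in Proposition \ref{basic-slpgl}; the representation-theoretic input is that the restriction of $\Sym^i \Std$ from the dual group of $\GL_2$ to the dual group of $\PGL_2$ is again $\Sym^i \Std$, so that the unramified Hecke operators correspond under the natural restriction map on spherical Hecke algebras.

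The main obstacle will be to ensure that Tate's meromorphic continuation produces a genuine \emph{holomorphic section} of the $s$-parametrized family $\mathcal S^-_{L(\Std, s+3/2)}(N,\psi\backslash\PGL_2/N,\psi)$ of extended Schwartz spaces, namely to show that no poles appear beyond those absorbed by $L(\Std, s+3/2)$ and that the asymptotic expansion at $\xi = \infty$ persists with smooth dependence on $\eta = \xi^{-1}$ under the analytic continuation, uniformly in the base variable. This reduces to uniform Tate-integral estimates with controlled dependence on $\eta$.
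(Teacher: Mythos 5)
Your computation of the fibre integral and the resulting expansion at $\xi\to\infty$ is a reasonable alternative route for that part of the statement, and your basic-vector argument is in substance the paper's (matching the Satake characterizations of Propositions \ref{basic-std} and \ref{basic-slpgl}). But the two genuinely hard points of the proposition are not addressed. First, the behaviour at finite $\xi$, above all the germ at $\xi=0$: membership in $\mathcal S^-_{L(\Std,s+\frac{3}{2})}(N,\psi\backslash \PGL_2/N,\psi)$ requires the push-forward to coincide, near $\xi=0$, with an element of $\mathcal S(N,\psi\backslash \PGL_2/N,\psi)$, i.e.\ with the twisted push-forward of an honest Schwartz measure on $\PGL_2$; such measures are finite but in general \emph{not} smooth at $\xi=0$, so "manifestly Schwartz on compacts" is neither what is needed nor what your formula gives you. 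The danger is concentrated on the locus $\{c=0,\det=0\}\subset\Mat_2$, whose image in $N\backslash \PGL_2\sslash N$ is indeterminate (after blowing up it hits every $\xi$), so the naive fibre/Tate analysis does not control the contribution of Schwartz functions near that locus. This is exactly what the paper's proof isolates: it covers $\Mat_2$ by $\GL_2\cup\Mat_2^\circ$, identifies the image of each piece (the $\Mat_2^\circ$ part giving the germs at $\xi=\infty$, the $\GL_2$ part giving $\mathcal S(N,\psi\backslash \PGL_2/N,\psi)$ and hence the germ at $\xi=0$, whence also surjectivity), and then proves that the stalk at $\{c=0,\det=0\}$ supports no $(N,\psi)^2$-equivariant distributions, by stratifying and using that a unipotent group acting on the normal space at a fixed point has no nontrivial $\psi$-eigendistributions supported there. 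Your proposal has nothing in place of this lemma, and without it neither the identification of the image nor surjectivity (your one-line realization of "Taylor data plus Schwartz behaviour on compacts" mis-states the local condition) goes through.

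Second, the holomorphic continuation to all $s\in\CC$ is the actual content of the last part of the proposition, and your plan of meromorphically continuing the Tate-type integrals does not deliver it: a termwise continuation of $\int |u|^{2s+4}O_{(u,u\eta)}(\tilde\Phi)\,d^\times u$ will generically have poles (governed by the singular behaviour of $O_{(u,u\eta)}(\tilde\Phi)$ as $u\to 0$), and there is no mechanism by which poles are "absorbed into the $L$-factor in the definition of the target space" --- that space constrains the shape of the asymptotic expansion at $\xi=\infty$, it does not licence poles in $s$. The paper avoids analytic continuation altogether: the push-forward is manifestly holomorphic in $s$ on $|\det|^s\mathcal S(\GL_2\cup\Mat_2^\circ)$, and it is extended to all of $|\det|^s\mathcal S(\Mat_2)$ algebraically, via the surjectivity (in fact bijectivity) of the map on $(N,\psi)^2$-coinvariants coming from the vanishing of the stalk's coinvariants --- using exactness of coinvariants in the non-Archimedean case, and a holomorphy-of-the-kernel argument at Archimedean places. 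What you flag as "the main obstacle" is therefore not a technical estimate to be supplied later; it is the missing core of the proof.
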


\begin{proof}
Since the elements of $\mathcal S^-_{L(\Std,s+\frac{3}{2})}(N,\psi\backslash \GL_2/N,\psi)$ are obtained by twisted push-forward from the space $|\det|^s \mathcal S(\Mat_2)$, we will argue directly in terms of the twisted push-forward map:
$$ |\det|^s \mathcal S(\Mat_2) \to \mathcal M(N\backslash \PGL_2\sslash N),$$
that we will denote by $\overline{p_!}$.
Recall that ``twisted'' refers to the character $\psi$ on $N$.

Notice that $N\bbslash \Mat_2 \sslash N\simeq \mathbbm A^2$ with coordinates $(c,\det)$, where $c$ is the bottom-left coordinate of the matrix $\begin{pmatrix} a & b \\ c & d\end{pmatrix}$. 
First, let $\Mat_2^\circ$ denote the open subvariety with $c\ne 0$, and consider the space $\mathcal S(\Mat_2^\circ)$. The push-forward map $\Mat_2^\circ \to \Gm\times \mathbbm A^1 \subset \mathbbm A^2$ is smooth (submersive) and surjective, and the action of $N\times N$ on $\Mat_2^\circ$ is free, which implies that the twisted push-forward 
$$ \mathcal S(\Mat_2^\circ) \to \mathcal M(\Gm\times\mathbbm A^1)$$
is a surjection onto $\mathcal S(\Gm\times\mathbbm A^1)$. Dividing by the action of the center we get a submersive surjection 
$$ \Gm\times \mathbbm A^1 \ni (c,\det)\mapsto  \frac{\det}{c^2}\in \mathbbm A^1,$$
and hence a push-forward map 
$$ \mathcal S(\Gm\times \mathbbm A^1) \twoheadrightarrow \mathcal S(\mathbbm A^1),$$
and, more generally,
$$ |\det|^s \mathcal S(\Gm\times \mathbbm A^1) \twoheadrightarrow |\xi|^{-s} \mathcal S(\mathbbm A^1),$$
where $\xi^{-1}$ is the coordinate on $\mathbbm A^1$. This matches our coordinate $\xi$ in $N\backslash \PGL_2\sslash N$, and hence the image of the last map consists precisely of the elements of  $\mathcal S^-_{L(\Std,s+\frac{3}{2})}(N,\psi\backslash \PGL_2/N,\psi)$ whose germ at $\xi=0$ vanishes. 

On the other hand, the subspace $|\det|^s \mathcal S(\GL_2) = \mathcal S(\GL_2)$ of $|\det|^s \mathcal S(\Mat_2) $ has twisted push-forward equal, by definition, to $\mathcal S(N,\psi\backslash \PGL_2/N,\psi)$. Thus, we get a well-defined surjection
$$ \overline{p_!}: |\det|^s \mathcal S(\GL_2 \cup \Mat_2^\circ) \twoheadrightarrow \mathcal S^-_{L(\Std,s+\frac{3}{2})}(N,\psi\backslash \PGL_2/N,\psi),$$
for every $s$, which clearly is holomorphic in $s$.

To extend this map to $|\det|^s \mathcal S(\Mat_2)$, first of all we notice that the twisted push-forward map $\overline{p_!}$ indeed converges for $\Re(s)\gg 0$ --- in fact, already for $s=0$ because it is bounded by the untwisted push-forward map from finite measures on $\GL_2$ (by restriction) to finite measures on $N\backslash \PGL_2\sslash N$. Notice also that $p_!$ is $(N,\psi)^2$-equivariant. 

Now, consider the short exact sequence
$$ 0 \to |\det|^s \mathcal S(\GL_2 \cup \Mat_2^\circ) \to |\det|^s \mathcal S(\Mat_2) \to |\det|^s \mathcal S_{\{c=0, \det=0\}}(\Mat_2) \to 0,$$
where the space on the right is, by definition, the \emph{stalk} of $\mathcal S(\Mat_2)$ at the closed subset $\{c=0, \det=0\}$.

We claim that the $(N,\psi)^2$-coinvariants of the space $|\det|^s \mathcal S_{\{c=0, \det=0\}}(\Mat_2)$ (equivalently, of the space $\mathcal S_{\{c=0, \det=0\}}(\Mat_2)$) are zero; equivalently, there are no $(N,\psi)^2$-equivariant distributions supported on the set  $Y=\{c=0, \det=0\}$. To see that, stratify this set in terms of the entries $(a,d)$ on the diagonal of a matrix: we have open strata $\{a=0, d\ne 0\}$ and $\{a\ne 0, d=0\}$, and the closed stratum $\{a=d=0\}$. If $Y_1$ is one of these strata, we will prove that there are no $(N,\psi)^2$-equivariant distributions on an open neighborhood of $Y_1$ and supported on $Y_1$, and by decreasing induction on the dimension of the strata this will imply that there are no $(N,\psi)^2$-equivariant distributions on $\Mat_2$ supported on $Y$.

Let $Y_1$ be one of these strata. The subgroup $N_0= $ $N\times 1$ or $1\times N$ of $N^2$ (depending on $Y_1$) stabilizes every point on $Y_1$. Moreover, $Y_1$ has a semi-algebraic open neighborhood which is $N_0$-equivariantly isomorphic to $Y_1 \times V$, where $V$ is the fiber of the normal bundle to $Y_1$ at a point $y\in Y_1$. Thus, it is enough to show that there are no $(N_0,\psi)$-equivariant distributions on the space $V$ supported on $y$. But, such distributions are necessarily evaluations of derivatives on $y$, and the action of $N_0$ on the space of all derivatives is locally finite, with only the trivial generalized eigencharacter. Hence, there are no $(N_0,\psi)$-equivariant distributions on $V$ supported on $y$, and hence there are no $(N_0,\psi)$-equivariant distributions in a neighborhood of $Y_1$, supported on $Y_1$. 

This means that the map of coinvariants
$$ |\det|^s \mathcal S(\GL_2 \cup \Mat_2^\circ)_{(N,\psi)^2} \to |\det|^s \mathcal S(\Mat_2)_{(N,\psi)^2}$$
is surjective. In the non-Archimedean case, where the functor of $(N,\psi)^2$ coinvariants is exact, this immediately implies that this map is an isomorphism, allowing for a canonical extension of $\overline{p_!}$ to all of $|\det|^s \mathcal S(\Mat_2)$ (which coincides with the twisted push-forward in the convergent region). In the Archimedean case, it is possible to argue again that this map is an isomorphism, but it is easier to observe that, if there was a kernel $|\det|^s\cdot K$ to this map, in the region of convergence the push-forward map $\overline{p_!}$ is zero on this kernel. By holomorphicity, it has to be zero for all $s$, and thus the push-forward extends to $|\det|^s \mathcal S(\Mat_2)$ (and has the same image).

Finally, the statement on basic vectors follows immediately from their characterization in terms of the Satake isomorphism in Propositions \ref{basic-std}, \ref{basic-slpgl}.
\end{proof}

\section{Lifting from the universal Cartan} \label{sec:Cartan} 

Let $H$ be a reductive group. The reductive (torus) quotient of any Borel subgroup of $H$ is called the universal Cartan $A_H$; it is unique up to unique isomorphism induced by any element conjugating one Borel subgroup to another, and it is defined over the base field even if $H$ does not have a Borel defined over it (i.e., is not quasi-split). The $L$-group ${^LA_H}$ embeds canonically (by construction) into the $L$-group $\LH$. 

Now assume that $H$ is quasi-split. This map of $L$-groups corresponds to the following map of representations, at least generically: if $\mathcal W_F$ denotes the Weil group of $F$, a morphism
$$ \varphi: \mathcal W_F \to {^LA_H} \subset \LH$$
corresponds to a character $\chi$ of $A_H$, and to the principal series representation $\pi = I(\chi)= \Ind_B^H(\chi\delta^\frac{1}{2})$ of $H$ (where $\delta$ is the modular character of the Borel subgroup $B$). 

The transfer map $\mathcal T: \mathcal S(H) \to \mathcal S(A_H)$ should be such that the pull-back of a character of $A_H$ (viewed as a functional on $\mathcal S(A_H)$) is equal to characters of the corresponding principal series representations. In fact, it is not completely clear at first that this should be so: notice that for non-unitary characters, the corresponding principal series can have irreducible subquotients belonging to different $L$-packets --- so, one could ask that the pull-back of a character of $A_H$ should be just the character of the subquotient with the associated Langlands parameter. One easily sees, however, that this is too much to ask, and does not lead to a map that preserves ``nice'' spaces of test functions. Therefore, the prevailing philosophy of ``beyond endoscopy'' only requires the transfer to be compatible with functoriality for \emph{tempered} characters. (More generally, one could ask about lifting from tempered representations of one group to unitary Arthur representations of another with a  fixed ``Arthur-$\SL_2$'' parameters.)  In our case, this translates to the requirement that the pull-back of a \emph{unitary} character of $A_H$ via the transfer map should be the character of the corresponding --- \emph{tempered} --- principal series representation. It turns out that there is a unique map which has this property. Here is the construction:

The characters of principal series representations are well-known (s.\ \cite{vanDijk} for a more general formula for induced representations): On regular semisimple elements belonging to a maximally split torus $T$, identified with $A_H$ through its inclusion into some Borel subgroup, the character of $I(\chi)$ is 
$$\Theta_\chi (t)=  D_H^{-\frac{1}{2}}(t) \sum_{w\in W} {^w\chi}(t),$$
where $W$ denotes the relative Weyl group of $H$, and $D_H$ is the discriminant of the Weyl integration formula:
$$ \int_H \Phi(h) dh = \frac{1}{|W|}\int_T \int_{T\backslash H} \Phi(g^{-1} t g) dg D_H(t) dt$$
(when $\Phi$ is supported in the closure of the union of maximally split tori).
The character is zero off the closure of the union of maximally split tori.

Thus, our requirement for the transfer operator: 
$$ \int_{A_H} \mathcal T f (t) \chi(t)  = \int_H f(h) \Theta_\chi (h),$$
for a Schwartz measure $f = \Phi dh$ (where $\Phi$ is a Schwartz function on $H$) reads:

$$\int_H f(h) \Theta_\chi (h) = \frac{1}{|W|} \int_T \int_{T\backslash H} \Phi(g^{-1} t g) dg  D_H^{-\frac{1}{2}}(t) \sum_{w\in W} {^w\chi}(t) D_H(t) dt = $$
$$ = \int_T D_H^\frac{1}{2}(t) \int_{T\backslash H} \Phi(g^{-1} t g) dg  \cdot \chi(t) dt,$$
hence:
$$ \mathcal T(f) =  D_H^\frac{1}{2}(t) \left(\int_{T\backslash H} \Phi(g^{-1} t g) dg\right)  \cdot dt.$$

Choosing a Borel subgroup $B=TN$ (hence identifying $T=A_H$), we can decompose the integral over $T\backslash H$ as an integral over $N$, followed by an integral over $B\backslash H$ (where the integrand varies by the modular character $\delta$ of $B$ on the left), and make the change of variables 
$$ \int_N  \Phi(n^{-1} t n ) dn  = D_H^{-\frac{1}{2}}(t) \delta^\frac{1}{2}(t)  \int_N  \Phi(t n )  dn$$  
to finally arrive at the following formula for the transfer operator:
\begin{equation}\label{transfer-Cartan}
 \mathcal T(f) = \mathcal T(\Phi dh) = \delta^\frac{1}{2}(t) \left(\int_{B\backslash H} \int_N \Phi(g^{-1} t ng ) dn dg \right)\cdot dt. 
\end{equation}

Notice that the image lies in $\mathcal S(A_H)$, and of course it is $W$-invariant. 

Let me briefly make a comment about the global comparison, and how one can prove the lifting of automorphic characters from $A_H$ to $H$ using the trace formula: Let $k$ be a global field, and $\adele$ its ring of adeles. Of course, the stable trace formula for $A_H$, viewed as a functional on $\mathcal S(A_H(\adele))$ 
is just the generalized function
$$ \varphi \mapsto \sum_{\alpha\in A_H(k)} \frac{\varphi}{dt} (\alpha),$$
where $dt$ denotes Tamagawa measure. It is slightly difficult to compare its pull-back to $\mathcal S(H(\adele))$ via the transfer operator $\mathcal T$ with the usual, scalar-valued, (stable) trace formula for $H$. However, it is more natural to think of the trace formula for $H$ as valued in Laurent expansions around a point on the complex plane; this point of view is explained in \cite[\S 4.3]{SaSelberg}. For example, for a test measure $f$ on $H=\PGL_2$, the trace formula will be a formal expression of the form 
$$ \frac{\TF_{-1}(f)}{s} + \TF_0(f),$$
where $\TF_0$ is the usual, \emph{non-invariant} Selberg trace formula, and $\TF_{-1}$ is an invariant distribution which is equal to 
$$ -\int_{B\backslash H(\adele)} K_{f,B} (g, g) dg,$$
where $K_{f,B}$ is the kernel function for the action of $f$ on $L^2(A_H(k)N(\adele)\backslash H(\adele))$:
$$ K_{f,B}(x,y) = \sum_{\alpha\in A_H(k)} \int_{N(\adele)} \frac{f}{dh} (x^{-1} \alpha n y) dn.$$

This that has a spectral expansion  \emph{only  in terms of the Eisenstein spectrum}, and a geometric expansion as a sum over $A_H(k)$:
$$ -\int_{B\backslash H(\adele)} K_{f,B}(g,g) dg= -\sum_{\alpha\in A_H(k)} \int_{B\backslash H(\adele)} \int_{N(\adele)} \frac{f}{dh} (g^{-1} \alpha n g) dn dg.$$

In other words:

\begin{equation} \TF_{-1} (f) = -  \sum_{\alpha\in A_H(k)} \frac{\mathcal T f}{dt} (\alpha).\end{equation}

This is the global comparison between the trace formula for $H=\PGL_2$ and the trace formula for its universal Cartan, via the local transfer operator. (For a general semisimple group of split rank $r$, the Laurent expansion will be of order $r$, and the coefficient of $s^{-r}$ will be the invariant distribution which is comparable to the trace formula of $A_H$.) 

\section{Lifting from the boundary degeneration} \label{sec:degen}

In this section we let $X$ be a homogeneous, spherical, quasi-affine $G$-variety over a \emph{non-Archimedean field $F$ in characteristic zero}. A main theme of the present paper is the comparison between transfer operators involving such a variety $X$, and transfer operators involving its \emph{boundary degeneration}, as horospherical $G$-space which is, roughly, responsible for the continuous spectrum of $X$. 

We will assume that $G$ is split, since this case has been covered with more completeness in the literature, and we also assume that $X$ satisfies the technical assumptions of \cite[\S 2.1]{DHS} --- these ensure that $X$ satisfies the Paley--Wiener theorem \cite[Theorem 1.8]{DHS}, which we will recall below. We will also assume, for convenience, that $X$ carries a $G$-invariant measure. The assumptions are satisfied for all the varieties discussed in the present paper, namely: the group $\SL_2$, the space $\Gm\backslash \PGL_2$ and the ``Whittaker space'', i.e., the space $N\backslash G$ equipped with the line bundle defined by a non-degenerate character of the maximal unipotent subgroup $N$. 

We first recall the notion of a boundary degeneration, or asymptotic cone. The coordinate ring $F[X]$ is a multiplicity-free sum of irreducible $G$-modules, $F[X] = \bigoplus_\lambda F[X]_\lambda$, where $\lambda$ varies in a saturated monoid $\Lambda_X^{++}$ of dominant weights for $G$. The product $F[X]_\lambda\cdot F[X]_\mu$ has image in the sum of spaces $F[X]_\nu$, for some weight $\nu$ satisfying $\nu\preceq \lambda+\mu \overset{\mbox{def}}\iff (\lambda+\mu)-\nu \in R_G$, the positive root cone of $G$. This gives rise to a filtration of $F[X]$ by weights, with respect to the order $\succeq$. The corresponding $\Lambda_X^{++}$-graded ring $\gr F[X]$ is the coordinate ring of a horospherical affine $G$-variety $X_\emptyset^a$, whose open $G$-orbit, the (total) \emph{boundary degeneration} of $X$ we denote by $X_\emptyset$. 

The grading corresponds to an action of $A_X=$ the split torus with character group $\Lambda_X$ (= the group generated by $\Lambda_X^{++}$) on $X_\emptyset^a$ by $G$-automorphisms. The group $A_X$ will be called the \emph{universal Cartan} $A_X$ of $X$; it can also be defined as the quotient by which the universal Cartan $A$ of $G$ acts on the invariant-theoretic quotient $X\sslash N$ (where $N\subset B\twoheadrightarrow A$ is the unipotent radical of a Borel subgroup used to define $A$). The action of $A_X$ on the boundary degeneration $X_\emptyset$ can explicitly be described as follows:  $X_\emptyset$ is isomorphic (up to a choice of base point) to $A_X\times^{P(X)^-} G$, where $P(X)^-$ is opposite to the parabolic subgroup $P(X)\subset G$ stabilizing the open Borel orbit, and the notation $\times^H$ means the quotient of the product by the diagonal action of $H$. For example, in the group case $X=H$ it can be identified with $T\times^{B_H\times B_H^-} (H\times H)$, where $B_H$, $B_H^-$ are two opposite Borel subgroups of $H$, and $T$ is their intersection (a Cartan subgroup of $H$). Clearly, all the usual spaces of functions, such as  $L^2$ and Schwartz functions, on $X_\emptyset$ are parabolically induced from the corresponding spaces of functions on $A_X$ (using normalized parabolic induction from $P(X)^-$ to $G$).

We define the action of $A_X$ on functions by 
\begin{equation}\label{action-normalized-functions}
 (a\cdot \Phi)(x) = \delta_{P(X)}^{\frac{1}{2}}(a) \Phi( a\cdot x),
\end{equation}
on half-densities by 
\begin{equation}\label{action-normalized-densities}
 (a\cdot f)(x) = f( a\cdot x),
\end{equation}
and on measures by
\begin{equation}\label{action-normalized-measures}
 (a\cdot \mu)(x) = \delta_{P(X)}^{-\frac{1}{2}}(a) \mu( a\cdot x),
\end{equation}
where $\delta_{P(X)}$ is the modular character of the parabolic stabilizing the open Borel orbit in $X$. (Explicitly, if we write $X_\emptyset = S \backslash G$, where $S$ belongs to a parabolic $P$ of the class opposite to $P(X)$, so $A_X = P/S$, we have $\delta_{P(X)} = \delta_P^{-1}$; the above normalized actions are isometries on $L^2$-spaces.)

Although we will not explicitly use it, it is helpful to think of the following construction, where $X$ degenerates to $X_\emptyset$: Any cocharacter $\check\lambda:\Gm\to A_X$ which is the image of a strictly dominant cocharacter into $A$ defines a homomorphism $\Lambda_X \to \mathbb Z$, which reduces the filtration of $F[X]$ to an $\mathbb N$-filtration with the same associated graded. Moreover, the standard Artin--Rees construction \cite{Popov} gives rise to a $G\times\Gm$-family 
$$ \mathcal X \to \Ga$$
(compatible with the action of $\Gm$ on the base), with $\mathcal X\times_{\Ga} \Gm = X^a\times\Gm$ (where $X^a=\spec F[X]$), and $\mathcal X_0:=\mathcal X\times_{\Ga}\{0\} = X_\emptyset^a$. (The group $\Gm$ acts on $X_\emptyset^a$ via the chosen cocharacter into $A_X \subset \Aut^G(X_\emptyset^a)$.) There is also a variant of this construction, where the open set is taken to be the quotient of $X^a\times\Gm$ by the intersection of $\check\lambda(\Gm)$ with the $G$-automorphism group of $X$; for example, for $X=\SL_2$ under the $G=\SL_2\times\SL_2$-action, and $\check\lambda=$the positive coroot, this variant gives $\mathcal X=\Mat_2=$ the Vinberg monoid of $\SL_2$, and the map to $\Ga$ is the determinant map.

The above setup extends to cases of ``Whittaker induction'' from affine homogeneous varieties, cf.\ \cite[\S 2.6]{SV}. This includes the Whittaker model of $G$, which is the only case of Whittaker induction we will encounter in this paper, where the variety is $N\backslash G$, but its points are equipped with a complex line bundle $\mathcal L_\psi$ as in \S \ref{sec:Kuznetsov}. We will use the letter $X$ to refer to the data consisting of the variety and the line bundle; this changes the dual group, compared to the dual group of the variety equipped with the trivial line bundle --- in the Whittaker case, we have $\LX = \LG$, while with trivial character on $N$ the $L$-group of $X$ would just be the $L$-group of the universal Cartan of $G$. Moreover, the degeneration $X_\emptyset$ in the Whittaker case is the same variety $N\backslash G$, but equipped with the trivial line bundle. The family $\mathcal X$ of the previous paragraph (for any dominant coweight $\check\lambda$) is the trivial family $N\backslash G \times \Ga$, but the character defining the line bundle over the point $a\in \Ga$ is the character $x\mapsto \psi(\check\lambda(a) x \check\lambda(a)^{-1})$, thus specializing to the trivial character over zero.

In all cases, the $L$-group of $X_\emptyset$ coincides with the $L$-group of $A_X$, thus we have a canonical embedding
\begin{equation}\label{Ldegen}
 \LX_\emptyset\simeq {^LA_X} \hookrightarrow \LX.
\end{equation}

We let $X^\vee$ be equal to $X$ as a variety, but with the $G$-action twisted by a Chevalley involution, then all the above data for $X^\vee$ are ``dual'' to the data for $X$ --- for example, $A_{X^\vee}$ is obtained from $A_X$ by applying the Chevalley involution to the universal Cartan $A$. In the examples considered in this paper, we have $X\simeq X^\vee$ as $G$-varieties, except that in the Whittaker case one has to invert the character $\psi$ for $X^\vee$. In any case, there is no harm in proceeding with the general discussion. It seems that the correct quotient to consider for the relative trace formula, in order to analyze the spectrum of $X$, is $(X\times X^\vee)/G$.

In analogy to the functorial lift discussed in \S \ref{sec:Cartan}, it is natural to ask whether there is a transfer map
$$ \mathcal S(X\times X^\vee/G)\to \mathcal S(X_\emptyset \times X^\vee_\emptyset /G),$$
corresponding to the embedding \eqref{Ldegen} of $L$-groups.
A very natural such map exists, as we shall see, at least in the non-Archimedean case, but we need to enlarge the space of measures on the right. Then the it descends from a canonical ``asymptotics'' map $\mathcal S(X)\to \mathcal S^+(X_\emptyset)$ (and same for $X^\vee$), provided by the theory of asymptotics, where $\mathcal S^+(X_\emptyset)$ is an extension of $\mathcal S(X_\emptyset)$, to be described. 

The asymptotics map is a canonical morphism 
$$ e_\emptyset^*: \mathcal S(X)\to \mathcal S^+(X_\emptyset),$$
characterized by the fact that $\varphi$ and $e_\emptyset^*(\varphi)$ are ``equal close to infinity'', for every $\varphi\in\mathcal S(X)$. For a rigorous definition of what this means, and the construction of the map, cf.\ \cite[\S 5]{SV}.
Notice that there the asymptotics map was defined for functions, but here we consider it as a morphism of measures; there is a canonical way by which a $G$-invariant measure on $X$ induces a $G$-invariant measure on $X_\emptyset$, s.\ \cite[\S 4.2]{SV}, so there is no problem passing from functions to measures.

The space $\mathcal S^+(X_\emptyset)$ was introduced in \cite{DHS}: it is the closure of $\mathcal S(X_\emptyset)$ under a set of ``scattering morphisms'' $\mathfrak S_w: \mathcal S(X_\emptyset) \to \mathcal M(X_\emptyset)$ parametrized by the little Weyl group $W_X$ of $X$. The scattering morphisms give rise to an action of $W_X$ on $\mathcal S^+(X_\emptyset)$ by $G$-automorphisms, which semi-commute with the action of $A_X$, i.e., for $t\in A_X$ we have $$t\cdot \mathfrak S_w f = \mathfrak S_w ({^{w^{-1}}t}\cdot f).$$
(Recall that the action of $A_X$ on measures has been normalized in \eqref{action-normalized-measures}.)
The image of $e_\emptyset^*$ is precisely the subspace of $W_X$-invariants, with $W_X$ acting through the scattering operators --- this the Paley--Wiener theorem \cite[Theorem 1.8]{DHS}.

At this point I stress that the boundary degeneration $X_\emptyset$ should not be considered just as an abstract $G$-variety, but as a $G$-variety equipped with extra structure, which is obtained from $X$. This extra structure is encoded in the scattering morphisms. The space $\mathcal S^+(X_\emptyset)$ is not determined by the abstract variety $X_\emptyset$, but by the variety plus the scattering morphisms, which depend on $X$.

Now consider the morphisms induced by asymptotics:
$$ e_\emptyset^* \otimes e_\emptyset^*: \mathcal S(X)\otimes\mathcal S(X^\vee)\to \mathcal S^+(X_\emptyset)^{W_X} \otimes\mathcal S^+(X_\emptyset^\vee)^{W_X}.$$
One can show that they descend to morphisms on the spaces of push-forward measures: 

\begin{equation}\label{Bcoinv} \mathcal E_\emptyset^*: \mathcal S(X\times X^\vee/G) \to \mathcal S^+(X_\emptyset \times X_\emptyset^\vee/G).\end{equation}

Note that this is not automatic, because $\mathcal S^+(X_\emptyset \times X_\emptyset^\vee/G)$ cannot be identified with the $G^\diag$-coinvariants of $\mathcal S^+(X_\emptyset \times X_\emptyset)$. For example, when $X_\emptyset \simeq X_\emptyset^\vee = N\backslash \SL_2 \simeq \mathbbm A^2\smallsetminus\{0\}$, so $X_\emptyset \times X_\emptyset^\vee/G\simeq N\backslash G/N$, the map $\mathbbm A^2\smallsetminus\{0\}\to \mathfrak C\simeq \mathbbm A^1$ is the projection to a coordinate, but every evaluation on a point in the preimage of $0\in \mathfrak C$ is an $N$-invariant distribution. Nonetheless, using the invariance of the image of the asymptotics morphism $e_\emptyset^*$ under the scattering operators, one can show that the map $\mathcal E_\emptyset^*$ descending from $e_\emptyset^*\otimes e_\emptyset^*$ does exist, uniquely, and one can compute the pull-backs of \emph{relative characters} under this map. We will only present the final result for the examples considered in this paper.

Note that the boundary degeneration $X_\emptyset^\vee$ for $X^\vee$ is isomorphic to $A_X\times^{P(X)} G$, whereas the boundary degeneration for $X$ is $A_X\times^{P(X)^-} G$. Thus, the quotient $\C_\emptyset:= X_\emptyset \times X_\emptyset^\vee \sslash G$ contains an open subset which is an $A_X$-torsor, with the action of $A_X$ descending from the action on $X_\emptyset$. This identification of the open orbit as an $A_X$-torsor is compatible with our identification of an open subset of $N\backslash G\sslash N$ in \S \ref{sstrivialization} with $A$. Indeed, if $w$ denotes a representative of the longest Weyl group element, and $N^-=w^{-1} N w$, then $X_\emptyset$ can be considered as a quotient of $N^-\backslash G$ (non-canonically, up to a choice of base point on an $A$-orbit), $X_\emptyset^\vee$ as a quotient of $N\backslash G$, and hence $X_\emptyset \times X_\emptyset^\vee\sslash G$ as a quotient of $N^-\backslash G\sslash N$. The map $g\mapsto wg$ identifies $N\backslash G\sslash N$ with $N^-\backslash G\sslash N$, and this is the map used in \S \ref{sstrivialization} to identify an open subset of $N\backslash G/N$ with $A$; thus, the $A$-action on $N\backslash G\sslash N$ and on (its quotient) $X_\emptyset \times X^\vee_\emptyset\sslash G$ are compatible. 

Fixing a base point to identify this open $A_X$-torsor with $A_X$, a class of relative characters for $X\times X^\vee$ is given by \emph{Mellin transforms} over this open $A_X$-torsor:
\begin{equation}\label{Mellin-boundary} \check f(\chi) = \int_{A_X} f(t) \chi^{-1}(t).\end{equation} 
For the space $\mathcal S(X_\emptyset\times X_\emptyset^\vee/G)$, this integral will converge for characters in some open region, and have analytic continuation to all characters.
Notice that we do not normalize the action of $A_X$ on measures or functions on $\C_\emptyset$, which is why a shift by $\delta_{P(X)}^{\frac{1}{2}}$ will appear in the formulas that follow. The analog of the integration formula \eqref{integration}, which now requires the modular character $\delta_{P(X)}$ instead of $\delta$, shows that the functional $f\mapsto \check f(\chi\delta_{P(X)}^{\frac{1}{2}})$ is a relative character associated to the normalized induced representation $I_{P(X)}(\chi)$, that is, its pull-back to $\mathcal S(X_\emptyset\times X_\emptyset^\vee)$ factors through a map:
$$\mathcal S(X_\emptyset\times X_\emptyset^\vee) \to I_{P(X)}(\chi)\hat\otimes I_{P(X)}(\chi^{-1}) \to \CC.$$

The pull-back of Mellin transform $f\mapsto \check f(\chi\delta_{P(X)}^{\frac{1}{2}})$ via $\mathcal E_\emptyset^*$ is a relative character $\mathcal I_\chi$ for the same representation, on the space $X\times X^\vee$. In the cases under consideration, where $X\simeq X^\vee$ except for changing the character $\psi$ to $\psi^{-1}$ in the Whittaker case, we have an ``inner product'' on $\mathcal S(X)\otimes \mathcal S(X^\vee)$, and we can ask how this relative character (for $\chi$ unitary) appears in the Plancherel decomposition of the most continuous part (cf. \cite[\S 14.1]{SV}) $L^2(X)_\emptyset$ of $L^2(X)$. The answer is given by the following theorem, which will be proven in \cite{SaTransfer1} by a calculation of scattering operators:

\begin{theorem}\label{thmpullbackfrombd}
There are choices of Haar measures on the spaces under consideration, and a choice of base point on the open $A_X$-orbit on $\C_\emptyset$, compatible with global choices\footnote{That is, when the spaces are defined over a global field $k$, one can choose the base point to be defined over $k$ and the local Haar measures to factorize a Tamagawa measure. This includes the Haar measure on $A_X$ used in the definition of Mellin transform, which induces the Haar--Plancherel measure $d\chi$ on $\widehat{A_X}$ that appears in the statement of the theorem.}, such that the Plancherel decomposition for $L^2(X)_\emptyset$ reads:

$$\int_X \frac{\varphi_1 \cdot \varphi_2}{dx} = \frac{1}{|W_X|}\int_{\widehat{A_X}} \mathcal I_\chi(\varphi_1\otimes\varphi_2) \mu_X(\chi) d\chi,$$
where the scalar $\mu_X(\chi)$ is given by the following formulas:

\begin{itemize}
 \item  For the Whittaker case, $X = N\backslash G$ equipped with a character $\psi\circ $(a sum of non-zero functionals on the simple root spaces),
 \begin{equation} \mu_X(\chi) = \prod_{\alpha>0} \gamma(\chi, \check\alpha, 0, \psi^{-1}).\end{equation}

 \item   For the group case, $X = H$ under the $G=H\times H$-action, 
 \begin{equation} \mu_X(\chi) = \prod_{\alpha>0} \gamma(\chi, -\check\alpha, 0, \psi^{-1}) \cdot \gamma(\chi, \check\alpha, 0, \psi^{-1}).\end{equation}

 \item  Finally, for the variety $X = \Gm\backslash \PGL_2$,
 \begin{equation} \mu_X(\chi) = \gamma(\chi,-\frac{\check\alpha}{2}, \frac{1}{2}, \psi^{-1})^2 \cdot \gamma(\chi,\check\alpha, 0, \psi^{-1}).\end{equation}
\end{itemize}
\end{theorem}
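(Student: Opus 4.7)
The plan is to reduce the computation of $\mu_X(\chi)$ to an explicit determination of the scattering operators $\mathfrak S_w$ on $\mathcal S^+(X_\emptyset)$, and then to invoke the Paley--Wiener theorem to transfer the Plancherel decomposition on the boundary to one on $X$. The starting observation is that, by construction of $L^2(X)_\emptyset$ as the most continuous part, the asymptotics map $e_\emptyset^*$ provides a spectral isomorphism between $L^2(X)_\emptyset$ and the $W_X$-invariants (under scattering) inside $L^2(X_\emptyset)$. On $L^2(X_\emptyset)$, which is parabolically induced from $L^2(A_X)$, the Plancherel decomposition is the one inherited from the Haar measure on $\widehat{A_X}$: the Mellin transform \eqref{Mellin-boundary} diagonalizes both $A_X$ and the (unnormalized) inner product, and produces the family of relative characters $f\mapsto \check f(\chi\delta_{P(X)}^{1/2})$ as a Plancherel direct integral with trivial density.

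Next, one has to relate this ``flat'' decomposition on $X_\emptyset$ to the true Plancherel measure on $X$. The key point is that, under $\mathcal E_\emptyset^*$, the inner product on $\mathcal S(X)\otimes \mathcal S(X^\vee)$ descends to a \emph{twisted} inner product on the $W_X$-invariants in $\mathcal S^+(X_\emptyset\times X_\emptyset^\vee/G)$. Up to the $\frac{1}{|W_X|}$ averaging over $W_X$, that twist is precisely the composition of the scattering operators needed to pass from a Mellin-transformed half-density on the open $A_X$-orbit to its $W_X$-symmetrization. Concretely, if $\mathfrak S_{w_\gamma}$ acts on the $\chi$-Mellin component by multiplication by some scalar $\mu_\gamma(\chi)$, then the density appearing in the Plancherel decomposition of $L^2(X)_\emptyset$ is the product $\mu_X(\chi)=\prod_\gamma \mu_\gamma(\chi)$ over a set of positive spherical roots (with multiplicities accounting for the length function on $W_X$).

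The computational heart of the argument is therefore the determination of each $\mu_\gamma(\chi)$ for a simple spherical root $\gamma$. Here I would use rank-one reduction: the pair $(G,X)$ is replaced by the Levi-like subpair defined by $\gamma$, so the problem is pushed down to the split rank-one cases, essentially $\SL_2$-varieties. For the Whittaker case, one computes the Jacquet integral on a rank-one Whittaker model and reads off a single Tate-type $\gamma$-factor $\gamma(\chi,\check\alpha,0,\psi^{-1})$; taking the product over positive roots of $G$ gives the first formula. For the group case $X=H$, the Plancherel density is $|c(\chi)|^{-2}$, so each positive root contributes the symmetric pair $\gamma(\chi,\check\alpha,0,\psi^{-1})\gamma(\chi,-\check\alpha,0,\psi^{-1})$ --- the non-Archimedean Gindikin--Karpelevich / Macdonald formula, expressed through gamma factors. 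For $X=\Gm\backslash\PGL_2$, there is a single spherical root and two types of contributions: the ``imaginary'' root $\check\alpha$ coming from the opposite intertwiner (as in the group case), together with a pair of half-root contributions $\gamma(\chi,-\check\alpha/2,\tfrac12,\psi^{-1})^2$ coming from the non-trivial rank-one Plancherel computation for $\Gm\backslash \PGL_2$, which can be carried out directly using the open Bruhat cell and the known spherical function / Hecke algebra of the variety.

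The main obstacle is Step~2: producing the scattering operators with the correct normalizations so that the resulting gamma factors line up with the stated formulas (including the shift by $\delta_{P(X)}^{1/2}$ in the relative character, and the signs on $\check\alpha$). This requires a careful bookkeeping of (i) the normalization \eqref{action-normalized-measures} of the $A_X$-action on measures, (ii) the choice of base point on the open $A_X$-orbit on $\C_\emptyset$, and (iii) the identification between scattering operators and intertwiners, for which one has to trace through the characterization of $e_\emptyset^*$ by equality at infinity, compute each $\mathfrak S_{w_\gamma}$ by a rank-one integral on the relevant boundary degeneration, and then match constants against a fixed global Tamagawa normalization. Once these rank-one scattering scalars are computed, the three formulas drop out by multiplicativity of gamma factors over the positive system of (half-)roots of $X$.
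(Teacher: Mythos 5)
You should first note a mismatch with the premise of the comparison: the paper does not prove this theorem at all. It is stated as a result ``which will be proven in \cite{SaTransfer1} by a calculation of scattering operators,'' so the only thing to compare against is that one-line indication of method. At that level your outline — pass to the boundary via asymptotics, use the flat Mellin--Plancherel decomposition \eqref{Mellin-boundary} on $X_\emptyset$, extract the density from the scattering structure, and compute in rank one — is consistent with the advertised strategy.

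That said, your sketch has two genuine gaps. First, the mechanism by which the density is supposed to appear is misstated: by the semi-commutation relation $t\cdot \mathfrak S_w f = \mathfrak S_w({^{w^{-1}}t}\cdot f)$ recorded in \S\ref{sec:degen}, a scattering operator carries the $\chi$-Mellin component to the ${^{w}\chi}$-component; it does not act by a scalar $\mu_\gamma(\chi)$ on a fixed component, and even where one-dimensionality of the relevant morphism spaces applies, the scalar in question is an intertwining-type normalization between different inducing characters. Consequently ``$\mu_X(\chi)=\prod_\gamma \mu_\gamma(\chi)$ over positive spherical roots with length-function multiplicities'' is not a derivation; the density has to come from computing the composition of the ($L^2$, not Schwartz) asymptotics with its adjoint, i.e.\ from the sum $\sum_{w\in W_X}\mathfrak S_w$ appearing in the inner product of wave packets, compared against the unnormalized functional $f\mapsto \check f(\chi\delta_{P(X)}^{1/2})$ that defines $\mathcal I_\chi$ --- and for this you need the unitary asymptotics and scattering theorem, not just the Schwartz-space Paley--Wiener theorem you invoke. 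Second, the stated formulas do not follow from your recipe even formally: in the group case the product runs over all positive roots with both $\pm\check\alpha$ contributions, and for $\Gm\backslash\PGL_2$ the density couples a factor $\gamma(\chi,\check\alpha,0,\psi^{-1})$ with a squared half-root factor at the shifted argument $\tfrac12$ — structure that a ``product of rank-one scattering scalars over spherical roots'' scheme does not predict. Those explicit rank-one computations, together with the normalization bookkeeping you defer to ``Step 2,'' are precisely the content of the theorem; as written, your proposal reduces the statement to the calculations it was meant to encode rather than supplying them.
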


The operator $\mathcal E_\emptyset^*$ plays a similar role as the transfer operator $\mathcal T$ that we are using in other comparisons, but we denote it by a different letter in order to avoid confusion when composing it with other transfer operators. Its geometric expression (i.e., the analog of \eqref{transfer-Cartan}) seems to be hard to describe, in general, and we will not proceed in this direction. We will instead investigate, with the help of the above analysis of relative characters, whether we can have commutative diagrams
\begin{equation}\label{Bcommute} \xymatrix{
\mathcal S(X\times X^\vee/G) \ar[r]^{\mathcal E_\emptyset^*}\ar[d]^{\mathcal T} & \mathcal S^+(X_\emptyset\times X_\emptyset^\vee/G) \ar[d]^{\mathcal T_\emptyset} \\
\mathcal S(Y\times Y^\vee/G') \ar[r]^{\mathcal E_\emptyset^*} & \mathcal S^+(Y_\emptyset\times Y_\emptyset^\vee/G') }\end{equation}
for the tranfer operators $\mathcal T$, $\mathcal T_\emptyset$ between the $\RTF$ quotients for two spherical varieties $X$ and $Y$ and their boundary degenerations, and whether there are similarities between the geometric expressions of $\mathcal T$ and $\mathcal T_\emptyset$. 


We assume that our spherical varieties $X$ and $Y$ belong to the list of varieties of Theorem \ref{thmpullbackfrombd}, and have the same dual group. In particular, the spaces $X_\emptyset\times X_\emptyset^\vee\sslash G$ and $Y_\emptyset\times Y_\emptyset^\vee\sslash G'$ both contain the same torus $A_X\simeq A_Y$ as an open subset (after fixing a base point, which we do as in the theorem).
To describe the ``correct'' transfer operators $\mathcal T_\emptyset$ that will make the diagram \eqref{Bcommute} commute, we should take into account the calculation of pull-backs of relative characters. Thus, in the end, the ``correct'' operators $\mathcal T_\emptyset$ do not depend just on the varieties $X_\emptyset$, $Y_\emptyset$ as abstract varieties, which might be isomorphic to each other, but also on the scattering morphisms that they are endowed with (which underlie the calculation of Theorem \ref{thmpullbackfrombd}).

More precisely, for the diagram \eqref{Bcommute} to commute, we would need the following to hold, using the scalars $\mu_X(\chi), \mu_Y(\chi)$ of Theorem \ref{thmpullbackfrombd}:

\begin{equation}\label{degentransferfe}
\mu_Y(\chi) \widecheck{(\mathcal T_\emptyset f)}(\chi\delta_{P(Y)}^{\frac{1}{2}}) = \mu_X(\chi) \check f(\chi\delta_{P(X)}^{\frac{1}{2}}). 
\end{equation}

Formally, at least, this means that the operator $\mathcal T_\emptyset$ is given as multiplicative convolution on $A_X$ by the measure $\nu_{X_\emptyset\to Y_\emptyset}$ whose Mellin transform is 
 \begin{equation}\label{degentransfermeasure}
  \check\nu_{X_\emptyset\to Y_\emptyset} (\chi) = \frac{\mu_Y(\chi \delta_{P(Y)}^{-\frac{1}{2}})}{\mu_X(\chi \delta_{P(X)}^{-\frac{1}{2}})}.
 \end{equation}

We will explicate these transfer operators case-by-case in the examples that follow.

\part{Special examples}

\section{Kuznetsov to trace formula for $\SL_2$} \label{sec:Rudnick}

Here we discuss the local comparison behind Rudnick's thesis \cite{Rudnick}. The goal is to compare stable orbital integrals for the trace formula and the Kunzetsov formula for the group $\SL_2$. Both the group $G=\SL_2$, and the Whittaker space $(N,\psi)\backslash G$ have the same dual group, namely, $\PGL_2$. So, we expect a local transfer map 
$$ \mathcal T: \mathcal S^-((N,\psi)\backslash G/(N,\psi)) \to \mathcal S(\frac{G}{G}),$$
which gives rise to stable functoriality between the Kuznetsov and the Selberg trace formula. The non-standard space $\mathcal S^-((N,\psi)\backslash G/(N,\psi)) $ of orbital integrals is the one corresponding to $L(\Ad, 1)$, as we will recall below. 

As before, we choose the identification $N\backslash G\sslash N = \Ga$ given by 
$$ \begin{pmatrix} a & b \\ c & d \end{pmatrix}\mapsto \zeta:= c,$$
and the section $ \zeta\mapsto \tilde\zeta =\left(\begin{array}{cc}
 & -\zeta^{-1} \\
\zeta &
\end{array}\right)$ over $\Gm$.	
We fix the chosen self-dual (with respect to $\psi$) Haar measure on $\Ga\simeq N$ and $\Ga \simeq N\backslash G\sslash N$; the integration formula \eqref{integration} now reads:
\begin{equation}\label{integration-KTF}
\int_G \Phi(g) dg = \int_{N\backslash G\sslash N} \int_{N\times N} \Phi(n_1 \tilde\zeta n_2) dn_1 dn_2 d\zeta,
\end{equation}
for a choice of Haar measure on $G$ that we fix from now on.

We will actually make no direct reference to Rudnick's thesis --- it is left to the reader to check that it can be reformulated in terms of the local comparison that we present here. Rather, we will start as in Section \ref{sec:Cartan}, by asking ourselves what is the correct pull-back $\mathcal T^*\Theta_\Pi$ of a tempered, stable character of $G$. It should be a relative character, or ``Bessel distribution'' (more correctly: Bessel generalized function) for the Kuznetsov formula, corrresponding to the generic representation $\pi$ in the packet denoted by $\Pi$. Recall that $L$-packets for $\SL_2$ consist of the irreducible components of restrictions of irreducible representations of  $\GL_2$, and that there exists a unique generic representation (for the given Whittaker datum $(N,\psi)$) in each tempered packet. The space of such Bessel distributions is one-dimensional, because this is the case for the space of morphisms
\begin{equation}\label{morphism}\mathcal S(N,\psi\backslash G)\hat\otimes\mathcal S(N,\psi^{-1}\backslash G) \to \pi \hat\otimes\tilde\pi;\end{equation}
the only question is what is the correct choice of scalar.

The answer is provided by global considerations, namely by the well-known relation between Petersson norms and Fourier coefficients of automorphic forms on $\GL_2$ (or $\SL_2$). More precisely, the following formula is known in this case for the square of the Whittaker period of a cusp form $\varphi \in \pi$, where $\pi$ denotes a generic automorphic representation of $\SL_2$:
\begin{equation}
 \left|\int_{[N]} \varphi(n) \psi(n) dn \right|^2 = \prod_v^* \int^*_{N} \left<\pi_v(n) \varphi_v, \varphi_v\right> \psi(n) dn.
\end{equation}
Here, we have fixed an isomorphism of abstract, unitary adelic representations: $\pi = \bigotimes_v' \pi_v$, and have assumed, accordingly, that the vector $\varphi\in \pi$ factorizes as a product of $\varphi_v$'s. The measure on $[\SL_2]$ used to define the unitary structure on $\pi$ is Tamagawa measure, while the $\pi_v$'s are abstract unitary representations of $G(k_v)$, with a distinguished spherical vector $\varphi_v^0$ at almost every place, satisfying $\Vert \varphi_v^0\Vert = 1$. In the formula above both the Euler product and the local integrals have to be understood in a regularized sense: 
The integral is understood as the value at $\lambda =1$ of the Fourier transform (defined as $\int \Phi(x)\psi(\lambda x) dx$) of the $L^2$-function $n\mapsto \left<\pi(n) \varphi_v, \varphi_v\right>$ (where $N$ is identified again with $\Ga$) --- I point the reader to \cite[\S 6.3]{SV} for details. The local factors of the Euler product are equal to the inverse adjoint unramified $L$-value $L(\pi_v, \Ad, 1)^{-1}$ at almost every place $v$, and their product has to be understood as the inverse of a partial $L$-value $L^S(\pi, \Ad,1)^{-1}$ for a large enough finite set $S$ of places, times the remaining factors. The validity of this formula follows from Rankin--Selberg theory --- I point the reader to \cite[Theorem 3.2]{LM} or \cite[\S 18.1]{SV} for details. The formula conjecturally generalizes to arbitrary groups, according to a conjecture of Lapid and Mao \cite{LM} which resembles the Ichino--Ikeda conjecture \cite{II}.

Returning to local notation, with $\pi$ an irreducible generic represenation of $G(F)$, it is natural to demand that the correctly normalized Bessel distribution $J_\pi$ (or $J_\Pi$, where $\Pi$ denotes the $L$-packet of $\pi$) be such that the adjoint of \eqref{morphism}, composed with evaluation on the cosets of $1$, is the $(N,\psi)\otimes (N,\psi^{-1})$-equivariant functional  
$$\tilde\pi\hat\otimes\pi\ni \tilde v \otimes v\mapsto \int^*_{N} \left<\pi(n) v, \tilde v\right> \psi(n) dn.$$ 
Explicitly, 
$$ J_\Pi(\Phi_1\otimes\Phi_2) = \sum_{(v,\tilde v)} \int_{(N\backslash G)^2} \Phi_1(x_1)\Phi_2(x_2) \int^*_{N} \left<\pi(nx_1) v, \tilde\pi(x_2)\tilde v\right> \psi(n) dn,$$
where $(v,\tilde v)$ runs over dual pairs in dual bases of $\pi$ and $\tilde\pi$.


We are now seeking the transfer operator from $\mathcal S((N,\psi)\backslash G/(N,\psi)) $ to $\mathcal S(\frac{G}{G})$ that will pull back the stable character $\Theta_\Pi$ to $J_\Pi$. If we believe that this operator should indeed have image in $\mathcal S(\frac{G}{G})$, which happens to be identified with the space of coinvariants of $\mathcal S(G)$ under the $\PGL_2$-adjoint action, then it is completely determined by this property, over all tempered packets $\Pi$, since stable tempered characters are dense in the space of stable tempered distributions. 

The following theorem will be proven in \cite{SaTransfer1, SaTransfer2}:

\begin{theorem}\label{thmRudnick}
Fix the isomorphism $N\backslash G\sslash N \simeq \Ga$ as before, thus identifying the subvariety $Y$ of anti-diagonal matrices used to trivialize orbital integrals (\S \ref{sstrivialization}) with $\Gm$. Fix the isomorphism $\Dfrac{G}{G} \simeq \Ga$ via the trace map. Consider the equivariant Fourier transform $\mathcal T:=\mathscr F_{\Id,1}$ of multiplicative convolution with the measure $D_1 = \psi(\zeta) d\zeta = \psi(\zeta) |\zeta|d^\times\zeta$ on $\Gm$.

Then:
\begin{enumerate}
 \item The convolution makes sense on $\mathcal S(N,\psi\backslash G/N,\psi)$ as the Fourier transform of a distribution, and maps it into $\mathcal S(\frac{G}{G})$. 
 \item For every tempered packet $\Pi$ and $J_\Pi$ as above, \begin{equation} \label{pullbackchar}
        \mathcal T^*\Theta_\Pi = J_\Pi.
       \end{equation} 
 \item The transform extends to an isomorphism, given by the same convolution understood, again, as the Fourier transform of a(n $L^2$-)distribution:
 \begin{equation}
  \mathcal T: \mathcal S_{L(\Ad,1)}^-(N,\psi\backslash G/N,\psi) \xrightarrow\sim \mathcal S(\frac{G}{G}),
 \end{equation}
 where the space on the left is the extended Schwartz space associated to the adjoint $L$-function at $1$, described in \S \ref{ssnonstandard}.
 \item At non-Archimedean places, it satisfies the fundamental lemma for the Hecke algebra, namely: for all $h\in \mathcal H(G,K)$, it takes the element
 $$ h\cdot f^0_{L(\Ad, 1)} \in \mathcal S_{L(\Ad,1)}^-(N,\psi\backslash G/N,\psi)$$ to the image of $h\cdot 1_{G(\mathfrak o)}dg$ in $\mathcal S(\frac{G}{G})$, for a suitable Haar measure $dg$. 
\end{enumerate}
\end{theorem}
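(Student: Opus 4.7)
The plan is to reduce the theorem to an identity of Mellin transforms on the universal Cartan $A\simeq\Gm$ (identified via the coroot $\check\alpha$). By Proposition~\ref{gamma}, the proposed operator $\mathcal T=\mathscr F_{\Id,1}$ acts on Mellin transforms by multiplication by $\gamma(\chi,\Id,0,\psi)$, while Theorem~\ref{thmpullbackfrombd} gives Plancherel densities $\mu_X(\chi)=\gamma(\chi,\check\alpha,0,\psi^{-1})$ on the Whittaker side and $\mu_Y(\chi)=\gamma(\chi,-\check\alpha,0,\psi^{-1})\gamma(\chi,\check\alpha,0,\psi^{-1})$ on the group side. Up to the conventional $\psi\leftrightarrow\psi^{-1}$ and the $\delta_{P(\cdot)}^{1/2}$-shifts in~\eqref{degentransfermeasure}, the ratio $\mu_Y/\mu_X$ is precisely the multiplier $\gamma(\chi,\Id,0,\psi)$ produced by $\mathcal T$. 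The content of the theorem is then the structural prediction of~\S\ref{sec:degen}: the correct transfer for $X,Y$ themselves is the same equivariant Fourier transform as the degenerate $\mathcal T_\emptyset$, continued across the full spaces.

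Concretely, first I would settle~(1) by expressing the push-forward of $f\in\mathcal S(N,\psi\backslash G/N,\psi)$ as a measure on $\mathfrak C\simeq\Ga$ that is Schwartz on $\Gm$ with a controlled germ at $\zeta=0$ coming from unipotent orbital integrals; the Fourier transform $\mathcal T f$ is then a tempered distribution whose germs at $\tau=\pm 2$ in $\frac{G}{G}\simeq\Ga$ match the known Shalika-type expansions of stable orbital integrals on $\SL_2$ near the central elements. For~(2), I would normalize $J_\Pi$ via the Lapid--Mao formula (classical for $\SL_2$), so that its germ expansion at $\zeta=\infty$ features $L(\pi,\Ad,1)^{-1}$; matching $\check J_\Pi(\chi)$ against the Weyl-integration expression for $\check\Theta_\Pi(\chi)$ reduces~\eqref{pullbackchar} on principal series to the gamma-factor identity above, and the identity propagates to discrete series and supercuspidals by density of tempered characters on the regular semisimple locus. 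Part~(3) then follows because the extended space $\mathcal S^-_{L(\Ad,1)}(N,\psi\backslash G/N,\psi)$ (cf.~\S\ref{ssnonstandard}) is designed so that its asymptotic terms~\eqref{expansionSL2} at $\zeta=\infty$ Fourier-transform to precisely the Schwartz data at $\tau=\pm 2$ required by $\mathcal S(\frac{G}{G})$, yielding a bijection.

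For the fundamental lemma~(4), Proposition~\ref{basic-slpgl} shows that $f^0_{L(\Ad,1)}$ has unramified Mellin transform equal, via Casselman--Shalika, to $L(\chi,\Ad,1)$ times an explicit Weyl-denominator factor; the trace-formula basic vector $1_{G(\mathfrak o)}\,dg$ has unramified Mellin transform given by Macdonald's formula, and the ratio of the two is exactly $\gamma(\chi,\Id,0,\psi)$. Hecke-equivariance is automatic from the $A$-action once the identity is established on the basic vector. The main obstacle I expect is the precise normalization in~(2): for non-Archimedean principal series the gamma-factor argument is immediate, but discrete series and the Archimedean regime require pinning down the regularization constants in the Lapid--Mao integrals so that \eqref{pullbackchar} holds with no extraneous scalar. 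This should be tractable by combining Shahidi's local coefficients with the explicit Plancherel density for $\SL_2$, or else by a global-to-local argument using an automorphic form whose adjoint period is computable in closed form.
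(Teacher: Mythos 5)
The paper does not prove Theorem~\ref{thmRudnick}; it explicitly defers the proof to the companion papers \cite{SaTransfer1, SaTransfer2}. The only computation carried out in \S\ref{sec:Rudnick} is the determination of the degenerate operator $\mathcal T_\emptyset$ via Theorem~\ref{thmpullbackfrombd} and~\eqref{degentransfermeasure}, and it appears \emph{after} the theorem as an a~posteriori observation that the proven $\mathcal T$ coincides with $\mathcal T_\emptyset$ in the chosen coordinates. Your first paragraph inverts this logic: you treat the identity $\mathcal T=\mathcal T_\emptyset$ as a ``structural prediction'' that makes the theorem true, but the Mellin computation only pins down the transfer between the horospherical spaces $X_\emptyset$, $Y_\emptyset$. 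It gives no a~priori control over how $\mathscr F_{\Id,1}$ acts on the actual Kuznetsov orbital integrals of $\mathcal S^-_{L(\Ad,1)}(N,\psi\backslash G/N,\psi)$ or on the stable orbital integrals of $\SL_2$, neither of which is a boundary-degeneration object. There is also a coordinate point your framing glosses over: the degenerate Mellin transform lives on $A_X$ in the torus variable $t\simeq\zeta$, whereas the coordinate on the adjoint quotient is $\tau=\tr=t+t^{-1}$; the assertion that the transfer is multiplicative convolution in the \emph{trace} variable is precisely the non-trivial content that has to be established, not read off from the degeneration.

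The concrete gap is in part~(2). Your Mellin argument, combined with Theorem~\ref{thmpullbackfrombd} and Proposition~\ref{gamma}, does yield $\mathcal T^*\Theta_\Pi=J_\Pi$ for tempered \emph{principal series} packets, because both relative characters are pulled back from the degeneration via $\mathcal E_\emptyset^*$ and the gamma factors cancel as in the paper's displayed identity. But ``the identity propagates to discrete series and supercuspidals by density of tempered characters on the regular semisimple locus'' is not a valid inference: discrete series packets are isolated in the tempered dual, their stable characters and normalized Bessel distributions are not limits of principal-series data, and no density statement determines the scalar relating $\mathcal T^*\Theta_\Pi$ and $J_\Pi$ for such $\Pi$. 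One genuinely needs a separate input here --- either explicit local computation of Whittaker--Plancherel densities and Harish-Chandra characters for $\SL_2$ discrete series, or the global-to-local route you mention only in the last sentence (prove~(1), (3), (4) first, compare the trace formulas globally, and isolate individual packets using linear independence of characters). You correctly flag this normalization issue as ``the main obstacle,'' but as written the proposal offers no mechanism for closing it. The sketches for~(1), (3), (4) (Shalika germs at the central elements $\tau=\pm 2$, Casselman--Shalika and Macdonald for the basic vectors) are reasonable in spirit, but each would require a careful bookkeeping of the $\zeta\leftrightarrow\tau$ change of variables that the proposal currently suppresses.
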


The last statement of the proposition needs clarification: First of all, $K = \SL_2(\mathfrak o)$, where $\mathfrak o$ denotes the ring of integers of $F$. For a $G$-space $X$, the unramified Hecke algebra $\mathcal H(G,K)$ acts on $C^\infty(X)^K$. Since our basic vector $f^0_{L(\Ad, 1)}$ descends, by definition (\S \ref{ssnonstandard}), from an unramified Whittaker measure $\varphi$, we write $h\cdot f^0_{L(\Ad, 1)} $ for the image of $h\cdot \varphi$ in $\mathcal S^-_{L(\Ad, 1)} (N,\psi\backslash G/N,\psi)$. More intrinsically, the action of the unramified Hecke algebra on unramified vectors can be identified with the action of the ``unramified'' component of the Bernstein center (which is isomorphic to $\mathcal H(G,K)$), and this action descends to the coinvariant space 
$$ \mathcal S(N,\psi\backslash G/N,\psi) = \mathcal S((N,\psi\backslash G) \times (N,\psi^{-1}\backslash G))_{G^\diag}$$
(via its action on the first factor) and its extensions. Thus, it is not really an abuse of notation to write $h\cdot f^0_{L(\Ad, 1)} $, if we identify the unramified Hecke algebra with the unramified component of the Bernstein center. 

The statement on the action of the Hecke algebra is superfluous, here, because it can be inferred from the fundamental lemma for basic vectors, together with the statement on characters. In more general cases, though, it will be impossible to prove the statement on characters by a local argument. In those cases, the fundamental lemma for the full Hecke algebra will be the essential local input in order to separate a global identity of trace formulas into the contributions of individual $L$-packets and thus, via a global-to-local argument, prove the local transfer of relative characters.

What the reader should take away from the theorem above is that a local transfer operator characterized spectrally, in terms of relative characters, has a very simple geometric form --- essentially, a Fourier transform. This lies behind the global comparison of trace formulas in the thesis of Rudnick, who only considered holomorphic modular forms, and it should also be related to the Poisson summation formula between an approximation to the trace formula and an approximation to the Kuznetsov formula, proven in the work of Ali Altu\u{g} \cite{Altug1}. It should be possible to use the methods of \cite{SaBE2} to upgrade this to a full comparison of trace formulas, where on the Kuznetsov side, because of the insertion of $L$-functions, the relative trace formula needs to be understood using analytic continuation from $L(\Ad, 1+s)$ to $L(\Ad, 1)$ (again, as in \cite{SaBE2}).

Finally, we can compare the transfer operator $\mathcal T$ of the theorem above with the transfer opertor $\mathcal T_\emptyset$ for the boundary degenerations of the group and the Whittaker space. Recall that, for $Y=\SL_2$, we have $Y_\emptyset = $ the variety of $2\times 2$ matrices of rank one, while for $X=$ the Whittaker model we have $X_\emptyset=$ the same space $N\backslash G$ with trivial character on $N$. Equation \eqref{degentransferfe}
together with Theorem \ref{thmpullbackfrombd} in this case read:
$$ \check f(\chi) = \gamma(\chi\delta^{-\frac{1}{2}}, -\check\alpha, 0, \psi^{-1}) \widecheck{\mathcal T_\emptyset f} (\chi)= \gamma(\chi,-\check\alpha,1, \psi^{-1}) \widecheck{\mathcal T_\emptyset f} (\chi),$$
and Proposition \ref{gamma} implies that the transfer operator $\mathcal T_\emptyset$ in this case should be given by $\mathscr F_{\check\alpha, 1}$. (Note that $\gamma(\chi,-\check\alpha,1, \psi^{-1})^{-1} = \gamma(\chi,\check\alpha,0, \psi)$.)

We notice that, for the coordinates on $N\backslash G\sslash N$ and $\Dfrac{G}{G}$ fixed here, the transfer operator $\mathcal T$ is exactly the same as the transfer operator $\mathcal T_\emptyset$, namely, convolution by the measure $D_1$!

\section{Kuznetsov to relative trace formula for toric periods} \label{sec:Waldspurger}

Now consider the case of $\X=T\backslash G/T$, where $G=\PGL_2$ and $T\simeq \Gm$ is a split torus. One could also consider a non-split torus, but would need to slightly modify the equivariant Fourier transforms that we presented in \S \ref{sec:Fourier}. 

The role of the character $\Theta_\Pi$, here, will be played by a relative character $I_\pi$ for an irreducible tempered representation of $\PGL_2$, for the quotient space $\Gm\backslash \PGL_2/\Gm$. The definition of the relative character $I_\pi$ is completely analogous to that of the Kuznetsov relative character $J_\pi$: It is given as the composition
$$ \mathcal S(X\times X)\to \pi\hat\otimes\tilde\pi\to\CC$$
(where $X=T\backslash G$),
where the dual of the map to $\pi\hat\otimes\tilde\pi$ is the morphism
$$ \tilde\pi\otimes\pi\to C^\infty(X\times X)$$
that, composed with evaluation at $T1 \times T1$ is given by:
$$\tilde v \otimes v\mapsto \int_{T} \left<\pi(t) v, \tilde v\right> dt.$$ 
Here the integral is convergent (for tempered representations), and no normalization is needed. Moreover, the $L$-packets for the group $\PGL_2$ are singletons (if we do not consider its inner forms, which we should have, in the case of a non-split torus), therefore there is no need to distinguish, notationally, between $\pi$ and its $L$-packet $\Pi$.

The following theorem is a reformulation of results proven in \cite{SaBE1, SaBE2}.

\begin{theorem}
Fix the isomorphism $N\backslash G\sslash N \simeq \Ga $ as before, and an isomorphism $T\backslash G\sslash T = \Ga$ such that the identity element of $G$ maps to $1 \in \Ga$, and the non-trivial element of the Weyl group of $T$ maps to $0$. 

Consider the equivariant Fourier transform $\mathcal T:=\mathscr F_{\Id,1} \circ \mathscr F_{\Id,1}$ of multiplicative convolution, twice, with $D_{1} = \psi(\bullet) |\bullet| d^\times\bullet$ on measures on $\Gm$.

Then:
\begin{enumerate}
 \item The convolution makes sense on $\mathcal S(N,\psi\backslash G/N,\psi)$ as the Fourier transform of a distribution, and maps it into $\mathcal S(T\backslash G/T)$. 
  \item For every tempered representation $\pi$, \begin{equation} 
        \mathcal T^*I_\pi = J_\pi.
       \end{equation} 
 \item The transform extends to an isomorphism, given by the same convolution understood, again, as the Fourier transform of a(n $L^2$-)distribution:
 \begin{equation}
  \mathcal T: \mathcal S_{L(\Std,\frac{1}{2})^2}^-(N,\psi\backslash G/N,\psi) \xrightarrow\sim \mathcal S(T\backslash G/T).
 \end{equation}
 \item At non-Archimedean places, it satisfies the fundamental lemma for the Hecke algebra, namely: for all $h\in \mathcal H(G,K)$, it takes the element
 $$ h\cdot f^0_{L(\Std, \frac{1}{2})^2} \in \mathcal S_{L(\Std,\frac{1}{2})^2}^-(N,\psi\backslash G/N,\psi)$$ to the image of $h\cdot 1_{T\backslash G(\mathfrak o)}$ in $\mathcal S(T\backslash G/T)$.
\end{enumerate}
\end{theorem}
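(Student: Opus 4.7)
The approach parallels the treatment of the Rudnick case in Theorem \ref{thmRudnick}. Both the Kuznetsov quotient $\mathfrak C = N\backslash G\sslash N$ and the toric quotient $T\backslash G/T$ are identified with $\mathbbm{A}^1$ containing the open torus $A \simeq \Gm$, and the $A$-action (the ``Mellin action'') diagonalizes the spectral decomposition on each side. The plan is to reduce everything to a computation of Mellin transforms, use Proposition \ref{gamma} to read off the action of $\mathscr F_{\Id,1}$ on Mellin coefficients, and match the resulting multiplier with the explicit ratio $I_\pi/J_\pi$ for principal series.

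More concretely, I would first verify that the convolution $\mathcal T = \mathscr F_{\Id,1}\circ\mathscr F_{\Id,1}$ makes sense on $\mathcal S(N,\psi\backslash G/N,\psi)$, using that $D_1 = \psi(\bullet)|\bullet|d^\times\bullet$ is a tempered distribution whose Fourier transform acts sensibly on the measures at hand. Next, I would compute the Mellin components of $J_\pi$ and $I_\pi$ for $\pi = I(\chi)$ a unitary principal series: uniqueness of Whittaker and toric invariant functionals (for split $T$) reduces each to a scalar depending on $\chi$. By Proposition \ref{gamma}, $\mathcal T$ multiplies Mellin transforms by $\gamma(\chi,\Id,0,\psi)^2$, and the task is to check that this equals the ratio of these scalars. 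The squared gamma factor reflects the split Waldspurger identity, in which the toric period of an unramified vector contributes $L(\pi,\Std,\tfrac{1}{2})^2$ versus $L(\pi,\Ad,1)^{-1}$ for the squared Whittaker period, producing the square of the local functional equation for $L(\chi,\Id,\tfrac{1}{2})$. With $\mathcal T^* I_\pi = J_\pi$ in hand, density of stable tempered characters among stable tempered distributions furnishes claim (2), and claim (1) is a byproduct of the convergence analysis.

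For claim (3), I would analyze the asymptotic expansion \eqref{expansionPGL2} of elements of $\mathcal S^-_{L(\Std,1/2)^2}$ at $\xi = \infty$, where $s_1 = s_2 = \tfrac{1}{2}$ forces logarithmic contributions of the shape $(C_1(\xi^{-1}) + C_2(\xi^{-1})\log|\xi|)\, d^\times\xi$, and verify that these are precisely what the inverse of $\mathcal T$ applied to a Schwartz measure on $T\backslash G/T$ produces near the cusp; meromorphic continuation of Mellin transforms makes this a Tate-style exercise. For the fundamental lemma (4), Proposition \ref{basic-slpgl} expresses the Mellin transform of $h\cdot f^0_{L(\Std,1/2)^2}$ as $\check h(\check\chi)$ times $L(\pi_\chi,\Std,\tfrac{1}{2})^2$, up to fixed factors; multiplying by $\gamma(\chi,\Id,0,\psi)^2$ and applying the local functional equation converts this into the Mellin transform of the corresponding unramified orbital integral on $T\backslash G/T$.

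The main obstacle is claim (2), the spectral identification $\mathcal T^* I_\pi = J_\pi$: one must fix every normalization (the trivialization of the Whittaker line bundle, the choice of base point on $T\backslash G/T$, and Haar measures on $N$, $T$, $G$) and carry out the local Waldspurger-type computation cleanly. A more structural alternative is to route the comparison first through the boundary degenerations via the diagram \eqref{Bcommute}: Theorem \ref{thmpullbackfrombd} together with formula \eqref{degentransfermeasure} pins down $\mathcal T_\emptyset$ explicitly in terms of gamma factors, and one then only needs to verify that $\mathcal T$ is the correct deformation of $\mathcal T_\emptyset$, a perspective which also makes the ``no residual spectrum'' input of \cite{SaBE1, SaBE2} manifest.
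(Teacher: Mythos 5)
Your proposal does not follow the paper's route, and as a free-standing argument it has a gap at its center. The paper does not prove this theorem from scratch: its proof is a reduction to results established elsewhere --- the isomorphism of Schwartz spaces and the fundamental lemma are \cite[Theorems 5.1 and 5.4]{SaBE1}, and the relative character identity $\mathcal T^*I_\pi = J_\pi$ is \cite[Theorem 7.1.3]{SaBE2} --- and the actual content of the paper's argument is the bookkeeping needed to match conventions: the Kuznetsov coordinate here is $-\xi^{-1}$ there, the identification $T\backslash G\sslash T\simeq \Ga$ differs by a sign, $\psi$ versus $\psi^{-1}$ in the gamma factors, and the passage from functions of orbital integrals to measures, which is what turns ``twice convolution by $\psi(x)\,d^\times x$'' into ``twice convolution by $\psi(x)|x|\,d^\times x$''. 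Your sketch ignores this normalization work and instead outlines a new proof, so it must be judged on whether that outline closes.

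It does not, principally at claim (2). Proposition \ref{gamma} does tell you that $\mathcal T=\mathscr F_{\Id,1}\circ\mathscr F_{\Id,1}$ acts on Mellin transforms of measures on $\Gm$ by the scalar $\gamma(\chi,0,\psi)^2$, but $I_\pi$ and $J_\pi$ are \emph{not} Mellin-diagonal on $T\backslash G\sslash T$ and $N\backslash G\sslash N$: restricted to the open torus they are given by Bessel-type and toric-period-type functions carrying a full Mellin spectrum, so ``check that the multiplier equals the ratio of the two scalars'' is not a well-posed reduction. What is actually required is the Mellin decomposition of both relative characters and a comparison of their coefficient functions --- precisely the local Waldspurger-type computation that constitutes \cite[Theorem 7.1.3]{SaBE2} --- and you explicitly defer it as ``the main obstacle''. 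Neither of your two escape routes closes this: density of tempered relative characters can only show that a transfer operator with property (2) is \emph{unique}, not that this particular convolution has the property; and the boundary-degeneration diagram \eqref{Bcommute} together with Theorem \ref{thmpullbackfrombd} and \eqref{degentransfermeasure} only pins down $\mathcal T_\emptyset$ --- the assertion that the same kernel (viewed as a deformation) intertwines the relative characters of $X$ and $Y$ themselves is exactly the theorem, not a weaker statement one can ``verify''. Finally, for (1) and (3), landing in and surjecting onto $\mathcal S(T\backslash G/T)$ requires controlling germs at the two distinguished points $\xi=1$ (identity class) and $\xi=0$ (Weyl element), where the orbital-integral measures are singular, not only the expansion \eqref{expansionPGL2} at infinity on the Kuznetsov side; your ``Tate-style exercise'' addresses only the latter, while the former is the substance of \cite[Theorem 5.1]{SaBE1}. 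The unramified computation you outline for (4) is essentially right, but in the paper it, too, is quoted from \cite[Theorem 5.4]{SaBE1} rather than rederived.
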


\begin{proof}
The matching of the two spaces is \cite[Theorem 5.1]{SaBE1}, and the fundamental lemma for the Hecke algebra is \cite[Theorem 5.4]{SaBE1}. Note some differences in the coordinates used here and there: our coordinate $\xi$ for the Kuznetsov formula is $-\xi^{-1}$ there, and our isomorphism $T\backslash G\sslash T = \Ga$ is the negative of the isomorphism of \emph{loc.cit.} If we take these changes of coordinates into account, the transfer operator $|\bullet| \mathcal G$ of \cite[Theorem 5.1]{SaBE1}, from orbital integrals for $T\backslash G/T$ to orbital integrals for the Kuznetsov formula, becomes twice convolution with the measure $\psi^{-1}(\frac{1}{x}) d(\frac{1}{x})$ on $F^\times$; the inverse of that is twice convolution with $\psi(x) d^\times x$. This is the transfer operator between the \emph{functions} of orbital integrals, whereas (in our current coordinates) the Schwartz measures for both $T\backslash G/T$ and $(N,\psi)\backslash G/(N,\psi)$ are the product of those functions with an additive Haar measure on $\Ga$. Thus, the transfer operator $\mathcal T$ from test measures for the Kuznetsov formula to test measures for $T\backslash G/T$ is given by twice convolution by $\psi(x) |x| d^\times x$.

The statement on relative characters is \cite[Theorem 7.1.3]{SaBE2}.
\end{proof}

Again, we can compare the transfer operator $\mathcal T$ of the theorem above with the transfer operator $\mathcal T_\emptyset$ for the boundary degenerations of the space $Y=\Gm\backslash \PGL_2$ and the Whittaker space. In both cases, here, $X_\emptyset \simeq Y_\emptyset \simeq N\backslash G$ (but endowed with different scattering operators).  Equation \eqref{degentransfermeasure}, 
together with Theorem \ref{thmpullbackfrombd} in this case read:
$$ \check f(\chi) = \gamma(\chi\delta^{-\frac{1}{2}}, -\frac{\check\alpha}{2}, \frac{1}{2}, \psi^{-1})^2 \cdot  \widecheck{\mathcal T_\emptyset f} (\chi)= \gamma(\chi,-\frac{\check\alpha}{2},1, \psi^{-1})^2 \cdot \widecheck{\mathcal T_\emptyset f} (\chi),$$
and Proposition \ref{gamma} implies that the transfer operator $\mathcal T_\emptyset$ in this case should be given by $\mathscr F_{\frac{\check\alpha}{2}, 1}\circ \mathscr F_{\frac{\check\alpha}{2}, 1}$. 

Again, for the coordinates on $N\backslash G\sslash N$ and $T\backslash G\sslash T$ fixed here, the transfer operator $\mathcal T$ is exactly the same as the transfer operator $\mathcal T_\emptyset$!

\section{Functional equation for the standard $L$-function of $\GL_n$} \label{sec:standard}

Unlike the case of transfer operators between different relative trace formulas, for the functional equations of $L$-functions it turns out that we get more natural formulas working with \emph{half-densities}. 

Let $G=\GL_n$. The theorem that follows is due to Jacquet \cite[Theorem 1]{Jacquet}. To state it, recall that we have identified an open subset of $N\backslash G\sslash N$ with the universal Cartan $A$, by identifying the latter with the torus $T$ of diagonal matrices (through the upper triangular Borel), embedded as $T\mapsto wT \to N\backslash G\sslash N$. Here, choose $w=w_n=$ the anti-diagonal matrix whose entries on the anti-diagonal are all $1$.

\begin{theorem}\label{thmJacquet}
Let $G=\GL_n$. Consider the diagram
 $$\xymatrix{
 \mathcal D(\Mat_n) \ar[d]\ar[r]^{\mathcal F}& \mathcal D(\Mat_n^\vee)\ar[d]\\
 \mathcal D^-_{L(\Std,\frac{1}{2})}(N,\psi\backslash G/N,\psi) \ar@{-->}[r]^{\mathcal H_{\Std}} & \mathcal D^-_{L(\Std^\vee,\frac{1}{2})}(N,\psi\backslash G/N,\psi)
 }$$
where $\mathcal F$ denotes the equivariant Fourier transform: 
$$ \mathcal F(\varphi) (y) = \left(\int_{\Mat_n} \varphi(x) \psi(\left<x, y\right>) dx^\frac{1}{2}\right) dy^\frac{1}{2}$$
(for dual Haar measures $dx$, $dy$ on $\Mat_n$ and $\Mat_n^\vee$ with respect to the character $\psi$).

 There is a linear isomorphism $\mathcal H_{\Std}$ as above, 
 making the diagram commute. Moreover, $\mathcal H_{\Std}$ is given by the following formula:
 
 \begin{equation}
  \mathcal H_{\Std} = \mathscr F_{-\check\epsilon_1,\frac{1}{2}} \circ \psi(-e^{-\alpha_1}) \circ \mathscr F_{-\check\epsilon_2,\frac{1}{2}} \circ \cdots \circ \psi(-e^{-\alpha_{n-1}}) \circ \mathscr F_{-\check\epsilon_n, \frac{1}{2}},
 \end{equation}
where by $\check\epsilon_i$ we denote the cocharacter of the $i$-th coordinate of the torus of diagonal elements (written additively), identified with the universal Cartan via the Borel of upper triangular elements (and hence the cocharacter into the $i$-th column of the anti-diagonal, when we identify it with this torus via $a\mapsto w_n a$). The intervening factors $\psi(-e^{-\alpha_i})$ denote multiplication by $\psi$ composed with the minus the value of the indicated root (denoted exponentially here, to avoid confusion with the additive notation for weights).\footnote{For example, for $G=\GL_2$, $\psi(-e^{-\alpha_1})$ denotes the function $\begin{pmatrix} a & * \\ &d \end{pmatrix}\mapsto \psi(-\frac{d}{a})$. }
 \end{theorem}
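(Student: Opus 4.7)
The plan is to reduce the stated commutative diagram to an explicit identity of Kuznetsov orbital integrals, and then to unwind the matrix Fourier transform $\mathcal F$ on $\Mat_n$ as an iterated composition of one-dimensional Fourier transforms along the anti-diagonal, one per diagonal entry of $t$, interspersed with unipotent changes of variable that produce precisely the factors $\psi(-e^{-\alpha_i})$.

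First I would use the explicit push-forward formula \eqref{pushf-Matn}, which sends the Schwartz half-density $|\det g|^{\frac{n}{2}}\Phi(g)(dg)^{\frac{1}{2}}$ on $\GL_n\subset\Mat_n$ to $|\det t|^{\frac{n}{2}}\delta^{\frac{1}{2}}(t)(dt)^{\frac{1}{2}}\cdot O_t(\Phi)$. Using the equivariant embedding $g\mapsto g^{-1}$ of $\GL_n$ into $\Mat_n^\vee$, the analogous formula describes the right-hand vertical arrow. Thus the commutativity of the diagram reduces to a pointwise identity on $A\subset\mathfrak C$, relating $O_t(\mathcal F\Phi)$ to a specific operator applied to the function $t\mapsto O_t(\Phi)$.

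Next I would compute $O_t(\mathcal F\Phi)$ by expanding the Fourier transform as an integral over $\Mat_n^\vee$, parametrizing the dual variable $y$ via its own Bruhat-type coordinates $y=n_1' w_n t' n_2'$, and interchanging the order of integration. I would then integrate out the $n$ diagonal entries of $y$ one at a time: each single-variable integration is a one-dimensional Fourier transform over $F$, and after each such integration a unipotent row/column reduction is needed to restore the remaining matrix to the form $n_1 w_n t n_2$. This reduction mixes the diagonal entry $t_i$ that has just been dualized with its neighbor $t_{i+1}$, producing the character $\psi(-t_{i+1}/t_i)=\psi(-e^{-\alpha_i}(t))$. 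After $n$ such iterations, what remains is exactly the claimed composition
\begin{equation*}
 \mathscr F_{-\check\epsilon_1,\frac{1}{2}}\circ\psi(-e^{-\alpha_1})\circ\mathscr F_{-\check\epsilon_2,\frac{1}{2}}\circ\cdots\circ\psi(-e^{-\alpha_{n-1}})\circ\mathscr F_{-\check\epsilon_n,\frac{1}{2}},
\end{equation*}
where the shift $\frac{1}{2}$ is accounted for by the $|\det t|^{\frac{n}{2}}\delta^{\frac{1}{2}}(t)$ factor from \eqref{pushf-Matn} together with the Jacobians of the successive changes of variables.

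As a spectral check that the normalizations are right, Proposition \ref{gamma} shows that $\mathscr F_{-\check\epsilon_i,\frac{1}{2}}$ multiplies the Mellin coefficient at an unramified character $\chi$ by $\gamma(\chi,-\check\epsilon_i,\frac{1}{2},\psi)$. The unipotent twists $\psi(-e^{-\alpha_i})$ are irrelevant on unramified vectors, so the product $\prod_i\gamma(\chi,-\check\epsilon_i,\frac{1}{2},\psi)$ is forced to match the Godement--Jacquet gamma factor attached to $(\pi_\chi,\Std)$, which is the known spectral effect of $\mathcal F$ on the zeta integral of $\Phi$ against a Whittaker matrix coefficient. The main obstacle is carrying out the geometric argument rigorously: the iterated integrations are only conditionally convergent and must be interpreted via tempered-distribution Fourier transforms (as in the discussion surrounding \eqref{DS}), and the precise ordering $\check\epsilon_1,\ldots,\check\epsilon_n$ with interlaced simple-root twists depends delicately on the choice of $w_n$ and of upper-triangular $N$. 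This bookkeeping, essentially Jacquet's original calculation in \cite{Jacquet}, is where the real work lies.
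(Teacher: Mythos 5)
Your proposal takes a genuinely different route from the paper: you set out to re-derive the identity itself, by expanding the Fourier transform on $\Mat_n$ in Bruhat-type coordinates for the dual variable and integrating out the diagonal entries one at a time. The paper does none of this. Its proof treats Jacquet's Theorem 1 of \cite{Jacquet} as a black box, and consists entirely of a normalization dictionary: comparing Jacquet's functions $a\mapsto |\det a|^{\frac{n-1}{2}}\delta^{\frac{1}{2}}(a)O_a(\Phi)$ with the half-density push-forward \eqref{pushf-Matn}, identifying $\mathcal F(|\det|^{\frac{n}{2}}\Phi)(g)=|\det g|^{-\frac{n}{2}}\hat\Phi(g^{-1})$ under the equivariant embedding $g\mapsto g^{-1}\in\Mat_n^\vee$ (equivalently $\check\Phi(w_n g^{-1}w_n)$), replacing $\psi$ by $\psi^{-1}$, and substituting $a_i=b_{n+1-i}^{-1}$ together with an extra factor $|b_1\cdots b_n|^{-\frac{1}{2}}$ --- after which Jacquet's explicit integral formula literally becomes the displayed composition of the $\mathscr F_{-\check\epsilon_i,\frac{1}{2}}$'s and the factors $\psi(-e^{-\alpha_i})$.

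The gap in your write-up is that the decisive step --- that the iterated one-variable integrations, with the intervening row/column reductions, produce exactly the interlaced factors $\psi(-t_{i+1}/t_i)$, the exponents $\frac{1}{2}$, and the stated ordering of the $\check\epsilon_i$ --- is only described qualitatively and then explicitly deferred (``essentially Jacquet's original calculation''). Since that computation is the entire content of the theorem, the proposal as written establishes nothing beyond what it assumes: either you carry out the induction in full (including the distributional regularization of the conditionally convergent integrals, and the inversion/conjugation by $w_n$ that relates orbital integrals on the $\Mat_n^\vee$ side to those of the transformed function), or you cite Jacquet as the paper does, in which case the real content of the proof is precisely the normalization bookkeeping above, where the $\psi\leftrightarrow\psi^{-1}$ switch and the reversal $a_i=b_{n+1-i}^{-1}$ are exactly where sign and ordering errors would creep in --- your sketch does not address either. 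Note also that your unramified ``spectral check'' cannot substitute for the computation: the multiplication operators $\psi(-e^{-\alpha_i})$ are not diagonalized by Mellin transform (and are not ``irrelevant'' as operators), so matching gamma factors at unramified characters is only a consistency check and does not determine $\mathcal H_{\Std}$.
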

 
 \begin{remark}
  Notice that the transform $\mathcal H_{\Std}$ by construction satisfies the fundamental lemma for the Hecke algebra: the basic vector\footnote{We use the same symbol for the ``half-density'' basic vector as for the ``measure'' basic vector; their quotient is simply a half-density $(\delta(t) dt)^\frac{1}{2}$ on $N\backslash G\sslash N$, according to the integration formula \eqref{integration}. (Remember that $dt$ denotes a Haar measure on the torus $A$, here.)} $f^0_{L(\Std,\frac{1}{2})}$ maps to the basic vector $f^0_{L(\Std^\vee,\frac{1}{2})}$, and the same holds for their Hecke translates. Indeed, the basic vectors are push-forwards of the characteristic functions of $\Mat_n(\mathfrak o)$, resp.\ $\Mat_n(\mathfrak o)$, times a Haar half-density on the additive group of matrices, and Fourier transform takes one to the other, and is equivariant with respect to the $G$-action (in particular, the action of the Hecke algebra).
 \end{remark}

\begin{proof}
 Jacquet's Theorem 1 in \cite{Jacquet} is stated for the functions 
 $$a\mapsto |\det(a)|^\frac{n-1}{2} \delta^\frac{1}{2}(a) O_a(\Phi),$$
 where $O_a$ is the orbital integral represented by $a$, and $\Phi\in \mathscr F(\Mat_n)$, the Schwartz space of \emph{functions} on $\Mat_n$. To compare it with our setup, where a density on $\Mat_n$ is of the form $|\det|^{\frac{n}{2}}\Phi$ (times a Haar half-density on $G$) and its push-forward as an element of $\mathcal D^-_{L(\Std,\frac{1}{2})}(N,\psi\backslash G/N,\psi) $ is equal to $f:=|\det(p)|^\frac{n}{2} \delta^\frac{1}{2}(p) O_p(\Phi)$ (times a Haar half-density on the torus), the measures $dp_1 \cdots dp_n$ in Jacquet's formula have to be multiplied by $|p_1 \cdots p_n|^{-\frac{1}{2}}$. Moreover, Jacquet's result is in terms of an endomorphism of $\mathscr F(\Mat_n)$ that he denotes by $\check\Phi$. To make this into our equivariant transform $\mathcal F$ from $\mathcal D(\Mat_n)$ to $\mathcal D(\Mat_n^\vee)$, we must set 
 $$ \mathcal F(|\det|^{\frac{n}{2}}\Phi)(g)  = |\det(g)|^{-\frac{n}{2}} \check\Phi(w_n g^{-1} w_n) = |\det(g)|^{-\frac{n}{2}} \hat \Phi(g^{-1})$$
(using Jacquet's notation --- compare with \eqref{pushf-Matn}), and replace $\psi$ by $\psi^{-1}$ in Jacquet's definition of $\hat\Phi$. The push-forward of this to $\mathcal D^-_{L(\Std^\vee,\frac{1}{2})}(N,\psi\backslash G/N,\psi) $ is 
$$ \mathcal H_{\Std} f = |\det(b)|^{-\frac{n}{2}} \delta^\frac{1}{2}(b) O_{w_n b w_n}(\check\Phi)$$
(times the same Haar half-density on the torus as before).

Thus, if our coordinate is an anti-diagonal element $b$ with entry $b_i$ on the $i$-th column, we must set $a_i = b_{n+1-i}^{-1}$ in Jacquet's formula, multiply the final result by $|b_1 \cdots b_n|^{-\frac{1}{2}}$, and replace $\psi$ by $\psi^{-1}$. It then reads:

$$\mathcal H_{\Std} f (b_1, \dots, b_n)  = $$
$$= \int f(b_1 p_1, \dots, b_n p_n) \psi(\sum_{i=1}^n p_i - \sum_{i=1}^{n-1} \frac{b_{i+1}}{p_i b_i}) |p_1 \cdots p_n|^\frac{1}{2} d^\times p_1 \cdots d^\times p_n,$$
which is the formula of the theorem.

\end{proof}

The theorem implies a statement about pull-backs of relative characters via Hankel transforms. Namely, let $J_\pi$ denote the Kuznetsov relative character of an irreducible representation $\pi$ of $G$, defined exactly as (for $\SL_2$) in Section \ref{sec:Rudnick}, but now considered as a generalized half-density by multiplying by a half-density of the form $\delta(t)^\frac{1}{2} (dt)^\frac{1}{2}$ (cf.\ \eqref{pushf-densities}). Its pull-back (dual to the twisted push-forward $p_!$) to $G$ is a generalized matrix coefficient for $\pi$, thought of as a functional on half-densities on $G$. As $\pi$ varies in the family $\{\pi \otimes |\det|^s\}_{s\in \CC}$, we can extend $J_\pi$ to a meromorphic family of functionals on $\mathcal D(\Mat_n)$, first by a convergent integral (when $\Re(s)\gg 0$), and then by meromorphic continuation. We can similarly consider it as a meromorphic family of functionals on $\mathcal D(\Mat_n^\vee)$.

The adjoint of equivariant  transform $\mathcal F$ acts by the scalar $\gamma(\pi, \Std, \frac{1}{2}, \psi)$ on generalized matrix coefficients of $\pi$, and thus we get:

\begin{corollary}\label{corcharsHankel}
We have  
\begin{equation}\mathcal H_{\Std}^* J_\pi = \gamma(\pi, \Std, \frac{1}{2}, \psi) \cdot J_\pi,\end{equation}
as meromorphic families of functionals on $\mathcal D^-_{L(\Std,\frac{1}{2})}(N,\psi\backslash G/N,\psi)$.
\end{corollary}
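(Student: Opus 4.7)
The plan is to reduce the identity to the Godement--Jacquet local functional equation via the commutative diagram in Theorem \ref{thmJacquet}. Since $\mathcal H_{\Std}$ is defined so that the diagram commutes, taking adjoints gives $\mathcal H_{\Std}^* \circ p_{!,\vee} = p_{!} \circ \mathcal F^*$, where $p_!$ and $p_{!,\vee}$ denote the twisted push-forwards from $\mathcal D(\Mat_n)$ and $\mathcal D(\Mat_n^\vee)$ to the corresponding extended Kuznetsov spaces. Both push-forwards are surjective by definition of the extended Schwartz spaces $\mathcal D^{-}_{L(\Std,\frac{1}{2})}(N,\psi\backslash G/N,\psi)$ and $\mathcal D^{-}_{L(\Std^\vee,\frac{1}{2})}(N,\psi\backslash G/N,\psi)$, so it suffices to prove the identity after pulling back to $\mathcal D(\Mat_n)$.

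First I would identify the pullback of $J_\pi$ to $\mathcal D(\Mat_n)$ with a Godement--Jacquet zeta integral. For $\Re(s)\gg 0$ the functional $J_{\pi\otimes |\det|^s}$ on $\mathcal D(N,\psi\backslash G/N,\psi)$ extends, via the push-forward, to an absolutely convergent functional on $\mathcal D(\Mat_n)$; unpacking the definition of the Kuznetsov relative character (as in \S \ref{sec:Rudnick}), this functional applied to $|\det|^{n/2}\Phi\,(dg)^{1/2}$ becomes the Godement--Jacquet zeta integral
\[
 Z(\Phi, W_{v,\tilde v, s}, \tfrac{1}{2}) = \int_{\GL_n} \Phi(g)\, W_{v,\tilde v,s}(g)\, |\det g|^{s+\frac{1}{2}}\, dg,
\]
summed over a dual pair of bases, where $W_{v,\tilde v,s}$ is the Whittaker matrix coefficient for $\pi\otimes|\det|^s$. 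These zeta integrals admit meromorphic continuation to all $s\in \CC$ by \cite{GJ}, and their specialization at $s=0$ recovers $p_!^*\, J_\pi$ (regularized as in the construction of $\mathcal D^{-}_{L(\Std,\frac{1}{2})}$).

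Next, the Godement--Jacquet local functional equation \cite{GJ} asserts
\[
 Z(\mathcal F\Phi, \widetilde W_{v,\tilde v, s}, \tfrac{1}{2}) \;=\; \gamma\bigl(\pi\otimes|\det|^s,\Std,\tfrac{1}{2},\psi\bigr)\, Z(\Phi, W_{v,\tilde v, s}, \tfrac{1}{2})
\]
as meromorphic identities in $s$, where $\widetilde W$ is the corresponding Whittaker function for the contragredient. Specializing at $s=0$ and using the equality $p_!^*\, \mathcal H_{\Std}^*\, J_\pi = \mathcal F^* \, p_{!,\vee}^*\, J_\pi$ from the commutative diagram yields the desired identity upstairs, which descends by surjectivity of $p_!$.

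The main obstacle will be bookkeeping of normalizations: matching the shift by $|\det|^{n/2}$ built into the half-density convention (cf.\ \eqref{pushf-Matn}) with the shift by $\tfrac{1}{2}$ in the Godement--Jacquet zeta integral, and verifying that the meromorphic family $J_{\pi\otimes |\det|^s}$ on $\mathcal D(\Mat_n)$ agrees with the classical zeta integral on the nose (not only up to a holomorphic factor). A secondary technical point is justifying the identification in the Archimedean case, where one must ensure that the meromorphic continuation takes place in an appropriate class of tempered functionals and that $\mathcal F^*$ is indeed adjoint to $\mathcal F$ in the relevant topology; once the analogue of Tate's local functional equation is invoked at the level of Schwartz half-densities, these issues are standard.
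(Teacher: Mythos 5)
Your proposal follows essentially the paper's own route: the paper deduces the corollary from the commutative diagram of Theorem \ref{thmJacquet} together with the observation that the pull-back of $J_\pi$ along the twisted push-forward is a generalized matrix coefficient, extended to a meromorphic family of functionals on $\mathcal D(\Mat_n)$ by twisting with $|\det|^s$, on which the adjoint of $\mathcal F$ acts by the Godement--Jacquet scalar $\gamma(\pi,\Std,\frac{1}{2},\psi)$ --- exactly your zeta-integral argument, with the same normalization caveats. Aside from a harmless notational slip (your first adjoint identity should read $p_!^*\circ\mathcal H_{\Std}^* = \mathcal F^*\circ p_{!,\vee}^*$, which is what you actually use later), the argument matches the paper's.
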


We notice, again, that the operator (which here we call ``Hankel transform'') has a very simple form, given as a composition of equivariant Fourier transforms, with some intervening scalar factors. Globally, these scalar factors are equal to $1$ at every point of $A(k)$. Thus, it seems plausible that one could prove an equality of Kuznetsov formulas, one applied to a half-density $ f\in \mathcal D^-_{L(\Std,\frac{1}{2})}(N,\psi\backslash G/N,\psi)$ and the other applied to the half-density $\mathcal H_{\Std} f \in \mathcal D^-_{L(\Std^\vee,\frac{1}{2})}(N,\psi\backslash G/N,\psi)$. In fact, the reader can check that the operator of the functional equation for the \emph{square} of the standard $L$-function, introduced in \cite[Proposition 6.5.2]{SaBE2} and denoted by $\mathcal T$ there, is the square of our Hankel transform $\mathcal H_{\Std}$; this operator was used in \emph{loc.cit}.\ to give a purely trace formula-theoretic proof of the functional equation of the square of the standard $L$-function. I also point the reader to Herman's paper \cite{Herman} which, although not written in local language, gives a similar proof for the functional equation of the standard $L$-function.

Finally, taking boundary degenerations into consideration, we can examine whether there is a commutative diagram 
\begin{equation} \xymatrix{
 \mathcal D^-_{L(\Std,\frac{1}{2})}(N,\psi\backslash G/N,\psi)  \ar[r]^{\mathcal E_\emptyset^*}\ar[d]^{\mathcal H_\Std} & \mathcal D^\pm_{L(\Std,\frac{1}{2})}(N\backslash G/N) \ar[d]^{\mathcal H_{\Std,\emptyset}} \\
\mathcal D^-_{L(\Std^\vee,\frac{1}{2})}(N,\psi\backslash G/N,\psi) \ar[r]^{\mathcal E_\emptyset^*} & \mathcal D^\pm_{L(\Std^\vee,\frac{1}{2})}(N\backslash G/N) }\end{equation}
for some suitable operator $\mathcal H_{\Std,\emptyset}$. The spaces on the right are obtained by applying the asymptotics morphisms on the spaces on the left --- we will refrain from examining these spaces here. It will suffice, for here, to observe that, for the diagram to commute, the operator $\mathcal H_{\Std,\emptyset}$ should act on Mellin transforms by a scalar, which is the restriction of the gamma factor of the standard $L$-function, via the embedding 
\begin{equation}\label{jembedding}j:\check A\hookrightarrow \check G\end{equation}
to the characters of $A$. The precise calculation is as follows: 

We identify again an open subset of $N\backslash G/N$ with the torus $A$ as before (by fixing a base point), and define Mellin transform of half-densities as
$$ \check f(\chi) = \int_A f(t) \chi^{-1} (t) (dt)^\frac{1}{2},$$
where $dt$ is a fixed Haar measure on $A$. Then the operator $\mathcal H_{\Std,\emptyset}$ should satisfy:
$$ \widecheck{(\mathcal H_{\Std,\emptyset} f)} (\chi) = \gamma(\chi^{-1},\Std\circ j,  \frac{1}{2}, \psi) \check f(\chi) = \prod_{i=1}^n \gamma(\chi, -\check\epsilon_i , \frac{1}{2}, \psi) \check f(\chi),$$ 
where $j$ is the embedding of \eqref{jembedding}.
(The choice of additive character $\psi$, here, matches the Godement--Jacquet local functional equation for the Fourier transform $\mathcal F$.)

By Proposition \ref{gamma}, we have $\mathcal H_{\Std,\emptyset}  = \prod_{i=1}^n \mathscr F_{-\check\epsilon_i, \frac{1}{2}}$. We notice that the operator $\mathcal H_{\Std}$ looks just like a \emph{deformation} of $\mathcal H_{\Std,\emptyset}  $, with the intermediate scalar factors $\psi(-e^{-\alpha_i})$ inserted!

\section{Functional equation for the symmetric square $L$-function of $\GL_2$} \label{sec:sym2}

The representation $\Sym^2$ of $\GL_2$ factors through $\PGL_2 \times \Gm$, the dual group of $G=\SL_2 \times \Gm$. We wish to study the functional equation for the $L$-function associated to $\Sym^2$ at the level of the Kuznetsov formula of $G$. 

We denote by $\check\lambda_+, \check\lambda_0$ and $\check\lambda_-$ the three weights of $\Sym^2$, so that $\check\lambda_-$ is anti-dominant and $\check\lambda_+$ is dominant. 

The analog of Theorem \ref{thmJacquet} and Corollary \ref{corcharsHankel} in this setting is the following. To express it, we denote by $\eta_{D}$ the quadratic character associated to the quadratic extension $F(\sqrt{D})$, considered as a character of the $\Gm$-factor, and identified with the operator of multiplication by this character. We denote by $\delta_a$ the operator of multiplicative translation by $a$, under the $\Gm$-action on $N\backslash G\sslash N$. The letter $\zeta$ denotes the coordinate on $N\backslash \SL_2\sslash N$ (and hence also on $N\backslash G\sslash N$, by projection) that was fixed in \S \ref{ssnonstandard}. 
For a half-density $\varphi$ on $N\backslash G\sslash N$, its ``$(\Gm,|\bullet|^s)$-equivariant integral'' is the following half-density on $N\backslash \SL_2\sslash N$:
$$ \zeta\mapsto \int_{\Gm} \varphi(\zeta, a) |a|^{-s} (d^\times a)^\frac{1}{2},$$
where $(d^\times a)^\frac{1}{2}$ is a fixed Haar half-density on $F^\times$.  Finally, for a quadratic character $\eta$ with associated quadratic extension $E$, 
$\lambda(\eta,\psi)$ denotes the ratio of abelian gamma-factors:
\begin{equation}\label{lambda} \lambda(\eta,\psi) = \frac{\gamma(1, s, \psi)\gamma(\eta, s,\psi)}{\gamma_E(1,s, \psi\circ\tr)}.
\end{equation}

\begin{theorem}
There is a space $\mathcal D^-_{L(\Sym^2, \frac{1}{2})} (N,\psi\backslash G /N,\psi)$ of (densely defined) half-densities on $N\backslash G\sslash N$, containing $\mathcal D (N,\psi\backslash G /N,\psi)$, whose $(\Gm,|\bullet|^s)$-equivariant integrals converge for $\Re(s)\gg 0$, admit meromorphic continuation to the entire complex plane, and have image equal to $\mathcal D^-_{L(\Ad, \frac{1}{2}+ s)} (N,\psi\backslash \SL_2/N,\psi)$, for every $s$ away from the poles. 

Moreover the transform:
  \begin{equation}\label{HSymformula} 
  \mathcal H_{\Sym^2} = \lambda(\eta_{\zeta^2-4},\psi)^{-1} \mathscr F_{-\check\lambda_+,\frac{1}{2}} \circ \delta_{1-4\zeta^{-2}} \circ \eta_{\zeta^2-4} \circ \mathscr F_{-\check\lambda_0,\frac{1}{2}} \circ \eta_{\zeta^2-4} \circ \mathscr F_{-\check\lambda_-,\frac{1}{2}}\end{equation}
is a $\Gm$-equivariant isomorphism 
$$\mathcal D^-_{L(\Sym^2, \frac{1}{2})} (N,\psi\backslash G /N,\psi )\xrightarrow\sim \mathcal D^-_{L((\Sym^2)^\vee, \frac{1}{2})} (N,\psi\backslash G /N,\psi )$$
(where the space on the right is the analogous space with the $\Gm$-coordinate inverted), and hence descends for every $s\in \CC$ away from the poles to an isomorphism
$$\mathcal H_{\Ad,s} : \mathcal D^-_{L(\Ad, \frac{1}{2}+ s)} (N,\psi\backslash \SL_2/N,\psi) \xrightarrow\sim \mathcal D^-_{L(\Ad, \frac{1}{2}- s)} (N,\psi\backslash \SL_2/N,\psi).$$

As $\pi$ varies in a family of representations twisted by the character $|\bullet|^s$ of $\Gm$, the Hankel transform satisfies:
\begin{equation}\mathcal H_{\Sym^2}^* J_\pi = \gamma(\pi, \Sym^2, \frac{1}{2}, \psi) \cdot J_\pi,\end{equation}
as meromorphic families of functionals on $\mathcal D^-_{L(\Sym^2,\frac{1}{2})}(N,\psi\backslash G/N,\psi)$, where $J_\pi$ are the relative characters for the Kuznetsov formula, as before.

Finally, the space $\mathcal D^-_{L(\Sym^2, \frac{1}{2})} (N,\psi\backslash G /N,\psi)$ has a ``basic vector'' $f_{L(\Sym^2, \frac{1}{2})}^0$, the image of an unramified half-density on $(N,\psi\backslash G)$, whose images in all of the spaces  $\mathcal D^-_{L(\Ad, \frac{1}{2}+ s)} (N,\psi\backslash \SL_2/N,\psi)$ are equal to the basic vectors $f^0_{L(\Ad, \frac{1}{2}+ s)}$, and the Hankel transform $\mathcal H_{\Sym^2}$ maps $h\cdot f_{L(\Sym^2, \frac{1}{2})}^0$ to $h\cdot f_{L((\Sym^2)^\vee, \frac{1}{2})}^0$, for any element $h$ of the unramified Hecke algebra.
\end{theorem}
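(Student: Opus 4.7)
The plan is to derive this theorem by a Rankin--Selberg / Shimura-style integral representation for the symmetric square $L$-function of $\GL_2$, reducing the computation to manipulations already developed in the earlier sections. I would proceed in four steps.

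First, I construct the extended space $\mathcal D^-_{L(\Sym^2, \frac{1}{2})}(N,\psi\backslash G /N,\psi)$ as the twisted push-forward, in the sense of \S \ref{sstrivialization}, of a Schwartz space of half-densities on a local Shimura-type model containing $\SL_2\times\Gm$ as its open $G$-orbit (analogous to the role of $\mathcal D(\Mat_n)$ in Theorem~\ref{thmJacquet}). The corresponding unramified calculation, in the style of Shimura/Bump--Ginzburg, produces the basic vector; comparison with Proposition~\ref{basic-slpgl}, applied to the adjoint representation of $\check{\SL_2}=\PGL_2$ with a $\Gm$-twist of parameter $s$, will then identify its $(\Gm,|\bullet|^s)$-equivariant integrals with the basic vectors of $\mathcal D^-_{L(\Ad,\frac{1}{2}+s)}(N,\psi\backslash \SL_2/N,\psi)$. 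Convergence and meromorphic continuation of these integrals are then standard, following the usual analysis of zeta integrals for $\Sym^2$.

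Second, to establish the explicit formula \eqref{HSymformula}, I would first compute the ``boundary'' Hankel transform $\mathcal H_{\Sym^2,\emptyset}$ predicted by the pattern of Sections~\ref{sec:Rudnick}, \ref{sec:Waldspurger}, \ref{sec:standard}: by Proposition~\ref{gamma} and the calculation \eqref{degentransfermeasure}, it must act on Mellin transforms by $\gamma(\chi^{-1},\Sym^2\circ j,\frac{1}{2},\psi)$, hence equal the triple composition $\mathscr F_{-\check\lambda_+,\frac{1}{2}} \circ \mathscr F_{-\check\lambda_0,\frac{1}{2}} \circ \mathscr F_{-\check\lambda_-,\frac{1}{2}}$. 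The true transform $\mathcal H_{\Sym^2}$ should then be this operator ``deformed'' by a multiplicative correction supported away from the asymptotic cone. The shape of the correction is forced by the toric-period comparison of Section~\ref{sec:Waldspurger}: for an anti-diagonal element $\tilde\zeta \in \SL_2$, the centralizer is a torus which splits exactly when $\zeta^2-4\in (F^\times)^2$, so $\eta_{\zeta^2-4}$ implements the split-versus-nonsplit distinction in the relative characters, while $\lambda(\eta_{\zeta^2-4},\psi)^{-1}$ absorbs the Weil factors appearing in the quadratic base-change functional equation over $F(\sqrt{\zeta^2-4})$. The translation $\delta_{1-4\zeta^{-2}}$ encodes the change of variables that identifies the orbital integral on $\SL_2$ with a Bessel integral on the torus.

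Third, I prove the identity $\mathcal H_{\Sym^2}^*J_\pi = \gamma(\pi,\Sym^2,\frac{1}{2},\psi)\,J_\pi$ by lifting the claim to the Shimura model: the equivariant Fourier transform there realizes the functional equation of $L(\pi\otimes |\bullet|^s, \Sym^2, s)$ by construction, and descending this to orbital integrals gives the required gamma-factor identity as a family of meromorphic functionals. The fundamental lemma for the Hecke algebra follows immediately, since $h\cdot f^0_{L(\Sym^2,\frac{1}{2})}$ and $h\cdot f^0_{L((\Sym^2)^\vee,\frac{1}{2})}$ are both characterized by their Satake-parameter expansions via Proposition~\ref{basic-slpgl}, and the gamma-factor action on unramified principal series matches these. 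Descent of $\mathcal H_{\Sym^2}$ to the isomorphism $\mathcal H_{\Ad,s}$ is then routine: the three equivariant Fourier transforms $\mathscr F_{-\check\lambda_\pm,\frac{1}{2}}$, $\mathscr F_{-\check\lambda_0,\frac{1}{2}}$ are $\Gm$-equivariant of the correct weights, and the intermediate factors $\eta_{\zeta^2-4}$ and $\delta_{1-4\zeta^{-2}}$ depend only on the $\SL_2$-coordinate $\zeta$, so the whole composition commutes with the $(\Gm,|\bullet|^s)$-equivariant integration.

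The main obstacle will be Step~2: rigorously matching the correction $\lambda(\eta_{\zeta^2-4},\psi)^{-1}\cdot\eta_{\zeta^2-4}\circ\delta_{1-4\zeta^{-2}}\circ\eta_{\zeta^2-4}$ with what actually emerges from the Rankin--Selberg integral, especially near the wall $\zeta^2-4=0$ where the torus degenerates to a unipotent group and both the Shimura integral and the transfer of Section~\ref{sec:Waldspurger} require regularization. The delicate bookkeeping of Weil constants $\lambda(\eta,\psi)$ defined in \eqref{lambda} against the local gamma-factors produced by the Shimura integral is the analytic heart of the argument; everything else is essentially dictated by Proposition~\ref{gamma} and the deformation pattern already visible in the preceding examples.
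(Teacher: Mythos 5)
The paper states this theorem without proof; the result is new and the actual proof is deferred to the companion paper \cite{SaTransfer2}, which the introduction says computes the Hankel transform ``out of the Rankin--Selberg method.'' There is therefore no proof in the present paper to compare your proposal against line by line, and I can only assess whether your outline is internally coherent and consistent with the announced route.

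Your high-level strategy---produce the transform by descending a Rankin--Selberg or Shimura-type integral representation for $L(\pi,\Sym^2,s)$ to orbital integrals---matches the approach the paper attributes to \cite{SaTransfer2}, and your computation of the boundary operator $\mathcal H_{\Sym^2,\emptyset}=\prod_i \mathscr F_{-\check\lambda_i,\frac{1}{2}}$ via Proposition~\ref{gamma} and \eqref{degentransfermeasure} is exactly the heuristic that the paper itself records after the theorem. Your Step~2, however, rests on a concrete error. You claim the anti-diagonal element $\tilde\zeta = \left(\begin{smallmatrix} & -\zeta^{-1} \\ \zeta & \end{smallmatrix}\right)$ has centralizer a torus that splits precisely when $\zeta^2-4$ is a square. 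But $\tilde\zeta$ has trace $0$ and determinant $1$, hence characteristic polynomial $x^2+1$ independent of $\zeta$; its centralizer splits iff $-1\in(F^\times)^2$. The discriminant $\zeta^2-4$ would be that of an $\SL_2$-element with \emph{trace} $\zeta$, but in the paper's coordinates $\zeta$ is the lower-left entry of the matrix, not the trace --- it only becomes trace-like after applying the transfer operator of \S\ref{sec:Rudnick}, which is a Fourier-type integral transform, not an algebraic change of variables on the base. So the appearance of $\eta_{\zeta^2-4}$, $\delta_{1-4\zeta^{-2}}$, and $\lambda(\eta_{\zeta^2-4},\psi)^{-1}$ in \eqref{HSymformula} cannot be read off from centralizer structure on the Kuznetsov side as you suggest; it must emerge from the actual unfolding of the Rankin--Selberg or Shimura--Gelbart--Jacquet integral, which your proposal names but does not carry out. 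Since nothing else in the outline supplies that computation, the intermediate operators sitting between the three equivariant Fourier transforms remain undetermined, and with them the whole second and third parts of the theorem (the identity of spaces after $(\Gm,|\bullet|^s)$-integration, and the pull-back identity for $J_\pi$).
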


Again, the Hankel transform is expressed in terms of equivariant Fourier tranforms and relatively innocuous intermediate factors and operators which, in principle, preserve Poisson summation globally. Of course, the nature of these operators and the possibility of extending such formulas to higher symmetric powers remains unclear. But we can again view them as deformations of Hankel transforms for the boundary degenerations, namely, if we consider again a commutative diagram 
\begin{equation} \xymatrix{
 \mathcal D^-_{L(\Sym^2,\frac{1}{2})}(N,\psi\backslash G/N,\psi)  \ar[r]^{\mathcal E_\emptyset^*}\ar[d]^{\mathcal H_{\Sym^2}} & \mathcal D^\pm_{L(\Sym^2,\frac{1}{2})}(N\backslash G/N) \ar[d]^{\mathcal H_{\Sym^2,\emptyset}} \\
\mathcal D^-_{L((\Sym^2)^\vee,\frac{1}{2})}(N,\psi\backslash G/N,\psi) \ar[r]^{\mathcal E_\emptyset^*} & \mathcal D^\pm_{L((\Sym^2)^\vee,\frac{1}{2})}(N\backslash G/N) },
\end{equation}
we will find that the operator $\mathcal H_{\Sym^2,\emptyset}$ should satisfy:

$$ \widecheck{(\mathcal H_{\Sym^2,\emptyset} f)}(\chi) = \gamma( \chi^{-1}, \Sym^2\circ j,\frac{1}{2}, \psi) \check f(\chi) = \prod_{i=+, 0, -} \gamma(\chi, -\check\lambda_i , \frac{1}{2}, \psi) \check f(\chi),$$
and therefore, by Proposition \ref{gamma}:
$$\mathcal H_{\Sym^2,\emptyset} =  \prod_{i=+, 0, -} \mathscr F_{-\check\lambda_i, \frac{1}{2}}.$$

Thus, we observe again that the operator $\mathcal H_{\Sym^2}$ is just a deformation of $\mathcal H_{\Sym^2,\emptyset}$ with some intermediate operators which, again, in principle, would be suitable for a global Poisson summation formula!

\bibliographystyle{alphaurl}
\bibliography{biblio}

\end{document}